\newcommand{\osplit}[2]{\begin{scope}[shift={(#1+2,#2+2)}]
\draw (-2,-2) .. controls +(1,1) and +(-1,1) ..  (2,-2);
\draw (-2,+2) .. controls +(1,-1) and +(-1,-1) ..  (2,+2);
\end{scope}}
\newcommand{\isplit}[2]{\begin{scope}[shift={(#1+2,#2+2)}]
\draw (-2,-2) .. controls +(1,1) and +(1,-1) ..  (-2,+2);
\draw  (2,-2).. controls +(-1,1) and +(-1,-1) ..  (2,+2);
\end{scope}}
\newcommand{\ocross}[2]{\begin{scope}[shift={(#1+2,#2+2)}]
\draw (+2,-2) -- (-2,+2);
\pgfsetlinewidth{8*\pgflinewidth}
\draw[white] (-2,-2) -- (+2,+2);
\pgfsetlinewidth{.125*\pgflinewidth}
\draw (-2,-2) -- (+2,+2);
\end{scope}}
\newcommand{\icross}[2]{\begin{scope}[shift={(#1+2,#2+2)}]
\draw (-2,-2) -- (+2,+2);
\pgfsetlinewidth{8*\pgflinewidth}
\draw[white] (+2,-2) -- (-2,+2);
\pgfsetlinewidth{.125*\pgflinewidth}
\draw (+2,-2) -- (-2,+2);
\end{scope}}
\newcommand{\ocrosstext}{\raisebox{-0.1cm}{\begin{tikzpicture}[scale=.07]
\begin{scope}[shift={(2,2)}]
\draw (+2,-2) -- (-2,+2);
\pgfsetlinewidth{8*\pgflinewidth}
\draw[white] (-2,-2) -- (+2,+2);
\pgfsetlinewidth{.125*\pgflinewidth}
\draw (-2,-2) -- (+2,+2);
\end{scope}
\draw[dashed] (2,2) circle (2.8cm);
\end{tikzpicture}}\:}
\newcommand{\isplittext}{\raisebox{-0.1cm}{\begin{tikzpicture}[scale=.07]
\begin{scope}[shift={(2,2)}]
\draw (-2,-2) .. controls +(1,1) and +(1,-1) ..  (-2,+2);
\draw  (2,-2).. controls +(-1,1) and +(-1,-1) ..  (2,+2);
\end{scope}
\draw[dashed] (2,2) circle (2.8cm);
\end{tikzpicture}}\:}
\newcommand{\osplittext}{\raisebox{-0.1cm}{\begin{tikzpicture}[scale=.07]
\begin{scope}[shift={(2,2)}]
\draw (-2,-2) .. controls +(1,1) and +(-1,1) ..  (2,-2);
\draw (-2,+2) .. controls +(1,-1) and +(-1,-1) ..  (2,+2);
\end{scope}
\draw[dashed] (2,2) circle (2.8cm);
\end{tikzpicture}}\:}
\theoremstyle{plain}
	\newtheorem{theorem}{\noindent\textbf{Theorem}}[section]
	\newtheorem*{theorem*}{\noindent\textbf{Theorem}}
	\newtheorem{lemma}[theorem]{\noindent\textbf{Lemma}}
	\newtheorem{proposition}[theorem]{\noindent\textbf{Proposition}}
	\newtheorem{corollary}[theorem]{\noindent\textbf{Corollary}}
\theoremstyle{definition}
	\newtheorem*{definition*}{\noindent\textbf{Definition}}
\theoremstyle{remark}
	\newtheorem{remark}{\noindent\textbf{Remark}}[section]
\theoremstyle{definition}
	\newtheorem{definition}{\noindent\textbf{Definition}}[section]
\begin{document}
\title{On transverse invariants from Khovanov-type homologies}
\author{Carlo Collari}

\maketitle
\begin{abstract}
In this article we introduce a family of transverse invariants arising from the deformations of Khovanov homology. This family includes the invariants introduced by Plamenevskaya and by Lipshitz, Ng, and Sarkar. 
Then, we investigate the invariants arising from Bar-Natan's deformation. These invariants, called $\beta$-invariants, are essentially equivalent to Lipshitz, Ng, and Sarkar's invariants $\psi^\pm$. From the $\beta$-invariants we extract two non-negative integers which are transverse invariants (the $c$-invariants). Finally, we give several conditions which imply the non-effectiveness of the $c$-invariants, and use them to prove several vanishing criteria for the Plamenevskaya invariant $[\psi]$, and the non-effectiveness of the vanishing of $[\psi]$, for all prime knots with less than 12 crossings.
\end{abstract}

\section{Introduction}
\subsection*{Motivations and background}

%\subsubsection*{Transverse links} 

A \emph{contact $3$-manifold} is a $3$-dimensional (smooth) manifold $\mathcal{M}$ endowed with a totally non-integrable plane distribution $\xi$. A \emph{link} in $\mathcal{M}$ is a smooth embedding of a number of copies of $\mathbb{S}^1$ into $\mathcal{M}$. A link is called \emph{transverse} with respect to $\xi$ if it is nowhere tangent to $\xi$. Two transverse links in $(\mathcal{M},\xi)$ are \emph{equivalent} if they are ambient isotopic through a one-parameter family of transverse links. Transverse links are a central object of study in low-dimensional contact topology. In this paper we are concerned with the study of transverse links in a special setting. Fix a system of coordinates $(x,y,z)$ on $\mathbb{R}^3$. The \emph{symmetric contact structure} is the plane field $\xi_{sym} = ker(dz + xdy - ydx)$.
Since we are interested only in transverse links in $(\mathbb{R}^3,\xi_{sym})$, henceforth we shall restrict ourselves to this setting. 
Once an orientation of $\mathbb{R}^3$ is fixed, each transverse link inherits a natural orientation from the contact structure (\cite{Etnyre05}). All transverse invariants defined in this paper are defined with respect to this orientation.

\begin{remark}
With an abuse of language, the words ``link'',``knot'',``transverse link'' and ``transverse knot'' shall denote both embeddings and equivalence classes of embeddings.
\end{remark}

A natural way to tackle the problem of classifying transverse links is to produce invariants capable of distinguishing them. It follows immediately from the definitions that the underlying link (i.e. the ambient isotopy class of the oriented link) is a transverse invariant. Another well-known invariant for transverse links is the \emph{self-linking number}, which is denoted by $sl$. The underlying link and the self-linking number\footnote{For transverse links there is a refinement of the self-linking number which is called the self-linking matrix. Strictly speaking, the classical invariants for transverse links are the underlying link and the self-linking matrix. However, for consticency with the literature (\cite{Nglipsar13, Plamenevskaya06}) we are going to use the self-linking number instead of the self-linking matrix.} are called \emph{classical invariants}. 
These are by no means complete invariants for transverse links. Nonetheless, there are links, which are called \emph{simple}, whose transverse representatives are classified by their self-linking number (e.g. unknot, the positive torus knots). Among the earliest examples of non-simple links there are those given by Etnyre and Honda (\cite{EtnyreHonda03}), and by Birman and Menasco (\cite{BirmanMenasco06II}). More precisely, Birman and Menasco described a construction which yelds pairs of non-equivalent transverse links with the same classical invariants: the \emph{negative flype}\footnote{For coerence, we use the word ``flype'' with the same meaning as in \cite{Nglipsar13}.}. We call a pair of (non-equivalent) transverse links with the same classical invariants a \emph{non-simple pair}. Any transverse invariant capable of distinguishing the elements of a non-simple pair is called \emph{effective}.
\begin{remark}
There is a paucity of non-simple pairs, whose crossing number is sufficiently small, which are not obtained via negative flype (cf. \cite[Section 1 and Subsection 4.5]{Nglipsar13}). Therefore, there is a lack of sufficiently simple examples where to test the effectiveness of transverse invariants which are not capable of distinguishing negative flypes.
\end{remark}

In classical link theory, the well-know theorems of Alexander and Markov assert, respectively, that any oriented link in $\mathbb{R}^3$ can be represented as the closure of a braid, and that all these representations are related by a finite sequence of combinatorial moves (called Markov moves). In the theory of transverse links there are similar results; the first, due to Bennequin (\cite{Bennequin83}) states that  all transverse links can be represented as closed braids. The second result, due to Orevkov and Shevchishin (\cite{OrevkovShev03}) and, independently, to Wrinkle (\cite{Wrinkle03}), provides a complete set of combinatorial moves relating all braids whose closure represent the same transverse link. We summarise these results into the following theorem, which shall be referred as the \emph{transverse Markov theorem} in the rest of the paper.

\begin{theorem}[Bennequin \cite{Bennequin83}, Orevkov and Shevchishin \cite{OrevkovShev03}, Wrinkle  \cite{Wrinkle03}]
Any transverse link is transversely isotopic to the closure of a braid (with axis the $z$-axis).
Moreover, two braids represent the same transverse link if and only if they are related by a finite sequence of braid relations, conjugations, positive stabilizations, and positive destabilizations\footnote{Let $B\in B_{m-1}$, the \emph{positive} (resp. \emph{negative}) \emph{stabilization} of $B\in B_{m-1}$ is the braid $B\sigma_{m}\in B_{m}$ (resp. $B\sigma_{m}^{-1}\in B_{m}$). The destabilization is just the inverse process: if one considers a braid of the form $A\sigma_{m} B$ (resp. $A\sigma_{m}^{- 1} B$), where $A,\ B\in B_{m-1}$, then its positive (resp. negative) destabilization is the braid $AB$.}. These moves are called \emph{transverse Markov moves}.
\end{theorem}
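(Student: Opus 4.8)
The statement has two parts: the realisability of any transverse link as a braid closure (Bennequin), and the identification of the moves relating two braids with transversely isotopic closures (Orevkov--Shevchishin, Wrinkle); I would prove them in turn. For the first part, write $\mathbb{R}^3$ in cylindrical coordinates $(r,\theta,z)$, so that $x\,dy-y\,dx=r^2\,d\theta$ and $\xi_{sym}=\ker(dz+r^2\,d\theta)$; the projection $(r,\theta,z)\mapsto\theta$ is then the standard open book decomposition of $\mathbb{R}^3$, with binding the $z$-axis and half-plane pages $P_c=\{\theta=c\}$. Parametrising a link $L$ and writing its velocity as $(\dot r,\dot\theta,\dot z)$, the transversality condition (for the contact orientation) reads $\dot z+r^2\dot\theta>0$. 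I would show that, after a $C^0$-small transverse isotopy, $L$ may be taken disjoint from the binding and positively transverse to every page, i.e.\ $\dot\theta>0$ along $L$, which is exactly the statement that $L$ is a closed braid about the $z$-axis. The mechanism is a transverse general-position argument: first perturb so that $\theta|_L$ is a Morse function, then cancel its critical points by finger-type transverse isotopies, using that at a tangency to a page one has $\dot\theta=0$ and hence $\dot z>0$, which leaves room to deflect the strand forward in $\theta$ without violating $\dot z+r^2\dot\theta>0$ (pushing the affected arcs to large radius makes this explicit). This is Bennequin's theorem; alternatively one runs a transverse version of the Alexander/Vogel braiding algorithm.

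For the second part, let $B_0,B_1$ be braids with transversely isotopic closures and fix a transverse isotopy $\{L_t\}_{t\in[0,1]}$ with $L_0=\widehat{B_0}$ and $L_1=\widehat{B_1}$. I would put this one-parameter family into general position with respect to the open book above: for all but finitely many $t$ the link $L_t$ is a closed braid, and at each exceptional time exactly one ``elementary'' modification occurs --- a point of $L_t$ becomes momentarily tangent to a page (birth or death of a pair of critical points of $\theta|_{L_t}$), two crossings of the braid slide past one another, or the braid undergoes a cyclic rotation. Translating each modification into the braid group: the first is a stabilisation or destabilisation, the second is a braid relation, and the third is a conjugation. Concatenating these over $[0,1]$ exhibits $B_0$ and $B_1$ as related by a finite sequence of the listed moves --- provided the (de)stabilisations are positive.

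To see this last point, recall that for an $n$-strand braid $B$ with exponent sum $e(B)$ the closure $\widehat B$ has self-linking number $sl(\widehat B)=e(B)-n$, and that $sl$ is invariant under transverse isotopy. Braid relations and conjugations leave $e(B)-n$ unchanged, a positive (de)stabilisation leaves it unchanged, and a negative (de)stabilisation changes it by $\pm 2$; hence a negative (de)stabilisation cannot occur along the transverse isotopy $\{L_t\}$, and only positive ones appear. This completes the argument. The genuinely hard part is the parametrised general-position step --- controlling the transversality condition while reducing an arbitrary transverse isotopy to a finite string of local models. This is precisely what is carried out in \cite{OrevkovShev03}, via pseudoholomorphic-curve and positivity-of-intersection techniques for discs bounding the braid axis, and in \cite{Wrinkle03}, via front projections of Legendrian approximations together with the list of Legendrian Reidemeister moves and the transverse push-off correspondence; I would invoke one of these analyses rather than reprove it.
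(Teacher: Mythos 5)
The paper does not prove this statement: it is quoted as a known background theorem, attributed to Bennequin \cite{Bennequin83} for the braiding part and to Orevkov--Shevchishin \cite{OrevkovShev03} and Wrinkle \cite{Wrinkle03} for the transverse Markov moves, so there is no in-paper argument to compare yours against. Your outline reproduces the standard proof architecture: braid the link using the open book with binding the $z$-axis, put the transverse isotopy in general position so that only finitely many elementary events occur, translate each event into a braid-group move, and use the transverse invariance of $sl$ together with the formula $sl(\widehat{B}) = w(B) - b(B)$ to exclude negative (de)stabilizations. That last deduction is valid and non-circular, since $sl$ is defined for arbitrary transverse links and is invariant under transverse isotopy independently of any Markov theorem; note, though, that it only works because your general-position process keeps every intermediate link transverse --- if the perturbations left the transverse class, $sl$ would no longer be constant along the family. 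The genuinely hard content, namely the parametrized transversality argument showing that a transverse isotopy between closed braids can be deformed, through transverse links, into a finite concatenation of the listed local models, is exactly what you defer to \cite{OrevkovShev03} and \cite{Wrinkle03}; this is the same thing the paper does, so as a self-contained proof your text is incomplete at precisely the expected point, but as a summary of how the cited proofs run it is accurate.
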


\begin{remark}
Braids are naturally oriented, and their orientation coincide with the orientation of the corresponding transverse link. 
\end{remark}

\begin{remark}
By adding the negative stabilization to the set of transverse Markov moves one recovers the full set of \emph{Markov moves}.
\end{remark}

Any sequence of Markov moves between braids, naturally translates into a sequence of oriented Reidemeister moves between their closures. In particular, conjugation in the braid group can be seen as a sequence of Reidemeister moves of the second type followed by a planar isotopy, while the braid relations can be seen as either second or third Reidemeister moves. We remark that not all the oriented versions of second and third Reidemeister moves arise in this way; those which can be obtained as composition of Markov moves and braid relations are called \emph{braid-like} or \emph{coherent}. Finally, a positive (resp. negative) stabilization translates into a positive (resp. negative) first Reidemeister move, as shown in Figure \ref{fig:stabilizationasr1}. 

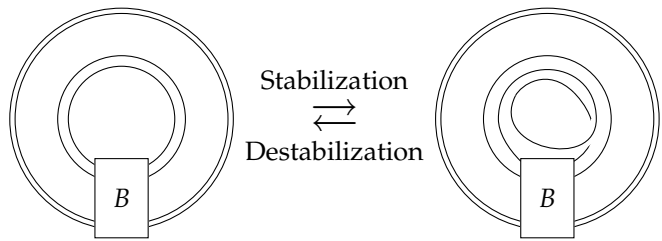
\begin{figure}[h]
\centering
\begin{tikzpicture}[scale =.7]

\begin{scope}[shift = {+(-8,0)}]
\draw (0,2) circle (1.2);
\draw (0,2) circle (1);
\draw (0,2) circle (2);
\draw (0,2) circle (2.1);
\draw[fill, white] (-.5,-.25) rectangle (0.5,1.25);
\draw (-.5,-.25) rectangle (0.5,1.25);
\node (a) at (0,.5) {$B$};
\end{scope}

\node at (-4,2.75) {Stabilization};
\node at (-4,2) {\huge{$\rightleftarrows$}};
\node at (-4,1.4) {Destabilization};
\draw (0,2) circle (1.2);
\draw (0,2) circle (2);
\draw (0,2) circle (2.1);
\draw[fill, white] (-.5,-.25) rectangle (0.5,1.25);
\draw (-.5,-.25) rectangle (0.5,1.25);
\node (a) at (0,.5) {$B$};

\draw (0.5,1.2) .. controls +(.75,.5) and +(.1,-.1) .. (.5,2.5);
\draw  (.5,2.5) .. controls +(-.5,.5) and +(.1,.2) .. (-0.6,2.44);
\pgfsetlinewidth{10*\pgflinewidth}
\draw[white]  (.92,2) .. controls +(0,-1) and +(-.5,-1) .. (-0.6,2.44);
\pgfsetlinewidth{.1*\pgflinewidth}
\draw  (.92,2) .. controls +(0,-1) and +(-.5,-1) .. (-0.6,2.44);

\draw  (.92,2) .. controls +(0,1.25) and +(0,1.25) .. (-.92,2) .. controls +(0,-.25) and +(-.25,.25) .. (-.5,1.2);

\end{tikzpicture}
\caption{Stabilization and destabilization as Reidemeister moves between closures.}\label{fig:stabilizationasr1}
\end{figure}

Let $B$ be a braid. The \emph{self-linking number} of $B$ is defined as follows:
\[ sl(B) = w(B) - b(B), \]
where $w$ denotes the \emph{writhe}, and $b$ denotes the \emph{braid index} (i.e. the number of strands).
In the light of the transverse Markov theorem, it is immediate that the self-linking number is a transverse invariant. (The integer $sl(B)$ is indeed the self-linking number of the transverse link represented by $B$.) Moreover, from the definition of $sl$ follows easily that a negative stabilization (resp. destabilization) does not preserve the transverse link.

Now, let us turn to the known results concerning transverse invariants in Khovanov-type homologies.
By \emph{Khovanov-type homologies} we mean all link homology theories obtained from a given Frobenius algebra using the construction originally due to Khovanov (\cite{BarNatan05cob, Khovanov00}). This construction shall be reviewed more in detail in Section \ref{Sec:Khovanovhom}. For the moment the reader should keep in mind that, given a Frobenius algebra $\mathcal{F}$ (over a ring $R$), there is a way to associate to each oriented link diagram $D$ a chain complex $C^{\bullet}_{\mathcal{F}}(D,R)$.  Moreover, this complex is combinatorially defined and can be endowed with either a second grading or a filtration. Furthermore, it is possible to associate to each sequence of Reidemeister moves between two diagrams a chain homotopy equivalence between the corresponding complexes. 

To each braid we associate the chain complex corresponding to its closure. Similarly, the map associated to a sequence of Markov moves between two braids is the map associated to the corresponding sequence of Reidemeister moves between the braid closures.

The first invariant for transverse links in a Khovanov-type homology is due to Plamenevskaya (\cite{Plamenevskaya06}). Given a braid $B$, its \emph{Plamenevskaya invariant} is a homology class $[\psi(B)]$, of bi-degree $(0,sl(B))$, in the  Khovanov homology of $\widehat{B}$. This homology class is invariant in the following sense; given two braids $B$ and $B^\prime$ related by a sequence of transverse Markov moves $\Sigma$, we have
\[(\Phi_{\Sigma})_{*}[\psi(B)] = [\psi(B^\prime)],\]
where $\Phi_{\Sigma}$ denotes the map associated to $\Sigma$.
After Plamenevskaya's groundbreaking work, invariants of similar flavour have been introduced in Heegard-Floer homologies. For example, the Ng-Oszv\'{a}th-Thurston $\widehat{\theta}$-invariant in $\widehat{HFK}$ (\cite{NgOzsvathThurston08}). The vanishing of this invariant is an effective invariant; there exist a non-simple pair of transverse knots, say $ T$ and $ T^\prime$, such that:
\[ \widehat{\theta}(T) = 0 \ne \widehat{\theta}(T^\prime).\]
Since the $\widehat{\theta}$-invariant can be interpreted as the Heegaard-Floer analogue of the Plamenevskaya invariant, it is natural to ask whether or not the vanishing of $[\psi]$ is effective. This is an open question at the time of writing. 

More recently, Lipshitz, Ng and Sarkar introduced (in \cite{Nglipsar13}) two invariants in the Khovanov-type homology associated to the Frobenius algebra $TLee$ (see Subsection \ref{Sec:exFA}). The  chain complex associated to $TLee$ is naturally filtered, denote by $\mathscr{F}_{i}C_{TLee}^{\bullet}$ this filtration. The \emph{Lipshitz-Ng-Sarakar} (\emph{LNS}) \emph{invariants} are two elements $\psi^{+}(B)$ and $\psi^{-}(B)$ in $\mathscr{F}_{sl(B)}C_{TLee}^{0}(\widehat{B},R)$, and are invariant in a sense which is made precise in the following proposition.

\begin{proposition}[Theorem 4.2 and Proposition 4.7 of \cite{Nglipsar13}]\label{prop:LipshitzNgSarkar}
Let $B$ and $B^\prime$ be two braids. If $\Phi$ is the map induced by a sequence of transverse Markov moves from $B$ to $B^\prime$, then
\[ \Phi(\psi^{*}(B)) = \pm \psi^{*}(B^\prime) + d_{TLee} \theta,\]
where $\theta \in \mathscr{F}_{sl(B)}C^{-1}_{TLee}(\widehat{B^\prime},R)$ and $\ast \in \{ + ,-\}$. Moreover, if $\Phi$ is the map induced by a negative stabilization (resp. destabilization), then
\[ \Phi(\psi^{*}(B)) = \pm \psi^{*}(B^\prime) + d_{TLee} \theta^\prime\: \in \mathscr{F}_{sl(B^\prime)}C^{-1}_{TLee}(\widehat{B^\prime},R)\quad \text{( resp. }\mathscr{F}_{sl(B)}C^{-1}_{TLee}(\widehat{B^\prime},R)\text{)},\]
where $\theta^\prime \in \mathscr{F}_{sl(B^\prime)}C^{-1}_{TLee}(\widehat{B^\prime},R)$ (resp. $\theta^\prime \in \mathscr{F}_{sl(B)}C^{-1}_{TLee}(\widehat{B^\prime},R)$) and $\ast \in \{ + ,-\}$.
\end{proposition}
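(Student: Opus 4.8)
The strategy is a move-by-move verification. By the transverse Markov theorem, a sequence of transverse Markov moves is built from braid relations, conjugations, and positive (de)stabilizations; each of the first two translates, as recalled above, into a composition of coherent (braid-like) Reidemeister II and III moves together with a planar isotopy, and a positive (de)stabilization translates into a positive Reidemeister~I move. The map $\Phi$ associated to a composite is the composite of the maps associated to the pieces, and a relation of the form ``$\Phi(\psi^{*}) = \pm\psi^{*} + d_{TLee}(\,\cdot\,)$'' is closed under composition: applying the next map to the previous correction term and adding the new one produces a correction term of the same type, and the signs simply multiply. Hence it suffices to establish the statement for each elementary local move and to control the filtration level of the correction term in each case. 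The negative (de)stabilization statement then follows from the same analysis applied to a negative Reidemeister~I move, with the filtration bookkeeping adjusted to the fact that $sl$ drops by $2$.

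First I would make the generators explicit. For a braid $B$ on $n$ strands the oriented (Seifert) resolution of $\widehat{B}$ consists of $n$ concentric circles and sits in homological degree $0$ of $C^{\bullet}_{TLee}(\widehat{B},R)$; the elements $\psi^{\pm}(B)$ are, by definition, the two canonical $TLee$-labellings of these circles (the labellings by the two ``idempotent-type'' generators of $TLee$, distributed according to the nesting convention of \cite{Nglipsar13}), viewed as chains in this summand. Counting the quantum/filtration grading of such a labelling --- each circle contributing a shift, corrected by the global shifts associated to $n_{+}$ and $n_{-}$ --- gives $\psi^{\pm}(B) \in \mathscr{F}_{sl(B)}C^{0}_{TLee}(\widehat{B},R)$, which also establishes the ambient filtration assertions. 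It is worth recording that the filtration leading term of $\psi^{\pm}(B)$ is precisely Plamenevskaya's cycle $\psi(B)$; this will be used to organise the local computations.

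Next, for the coherent Reidemeister~II and III moves and for the positive Reidemeister~I move, I would use the explicit chain homotopy equivalences of the cobordism formalism of \cite{BarNatan05cob}. The point is that, for a braid closure, in each of these local moves the participating crossings are resolved the Seifert way in the oriented state, so the oriented-resolution summand is ``extremal'' in the local cube: the component of the homotopy equivalence acting on it is a composition of elementary cobordism maps (identities, births, merges, splits and neck-cuttings), and its effect on a canonical $TLee$-labelling is a short, finite computation. One checks that in each local model the canonical generator is carried to $\pm$ the canonical generator of the target, up to an element in the image of $d_{TLee}$; for the positive Reidemeister~I move the new Seifert circle is born in the oriented state and, crucially, $sl(B)=sl(B')$, so no filtration subtlety arises beyond the fact that all these maps are filtered of degree $0$. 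Conjugation is handled by Reidemeister~II together with the observation that a planar isotopy of the closure acts on the oriented summand by relabelling circles, hence fixes $\psi^{\pm}$ on the nose.

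Finally, for a negative Reidemeister~I move (negative stabilization, resp.\ destabilization) the local computation is the same, but now the Seifert circle count and the writhe change so that $sl$ decreases by $2$, and the target generator $\psi^{\pm}(B')$ need not lie in the same filtration level as $\Phi(\psi^{\pm}(B))$. Here I would analyse the filtration degree of each elementary cobordism in the equivalence and of the resulting correction term to conclude that $\Phi(\psi^{*}(B)) = \pm\psi^{*}(B') + d_{TLee}\theta'$ with $\theta'$ in the stated filtration level, the destabilization case being obtained by reading the same local picture backwards (the homotopy inverse has the same filtration behaviour). I expect this filtration bookkeeping for the negative stabilization --- together with the verification that the correction terms for Reidemeister~II and, especially, Reidemeister~III can always be chosen inside $\mathscr{F}_{sl(B)}$ --- to be the main obstacle: unlike in Khovanov homology, where Plamenevskaya's cycle is preserved exactly by the Reidemeister~II and III maps, in the $TLee$ deformation the canonical generators are preserved only up to a $d_{TLee}$-boundary produced by the homotopy pieces of the Bar-Natan equivalences and by the neck-cutting relation, and showing that these boundaries sit in the correct filtration level is where essentially all of the work lies, the several coherent variants of Reidemeister~III being the most laborious.
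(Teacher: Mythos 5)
This proposition is not proved in the paper at all: it is imported verbatim from Lipshitz--Ng--Sarkar (Theorem 4.2 and Proposition 4.7 of \cite{Nglipsar13}), and the paper explicitly warns that the Reidemeister-move maps used by those authors differ from the ones used here. So there is no ``paper proof'' to match your argument against; the closest internal analogue is the proof of Theorem \ref{theorem:main1}, where the author establishes \emph{exact} preservation of the $\beta_{\mathcal{F}}$-cycles (no sign ambiguity, no $d$-boundary correction) by fixing explicit maps $\Phi_1^{\pm}$, $\Psi_1^{+}$, $\Phi_2$, $\Psi_2$, $\Phi_3$, $\Psi_3$ and computing their effect on a single enhanced state, with the $R_3$ case reduced to $R_2$ via the cone formalism of Lemma \ref{lemma:techthird}. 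Your outline is structurally the same move-by-move strategy, and the reduction of conjugation and braid relations to coherent $R_2$/$R_3$ moves plus the closure of the relation ``$\Phi(\psi^{*}) = \pm\psi^{*} + d_{TLee}(\cdot)$'' under composition are both correct and standard.

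The genuine gap is that your text is a plan rather than a proof: every substantive verification is deferred (``one checks\ldots'', ``I would analyse\ldots'', ``I expect\ldots to be the main obstacle''). The local computations for the coherent $R_3$ moves, the filtration level of the correction terms $\theta$ and $\theta'$, and the negative-stabilization bookkeeping are precisely the content of the proposition, and none of them is carried out. There is also a small inaccuracy worth flagging: for a coherent $R_2$ move the oriented resolution is \emph{not} extremal in the local cube --- it is the middle vertex $D^{\prime\prime}_{10}$ (or $D^{\prime\prime}_{01}$) of the square in Figure \ref{fig:fourresolutions} --- so the component of the homotopy equivalence on that summand genuinely involves the saddle maps $\mathbf{S}^{\prime\prime}_{10,11}$ and $\mathbf{S}^{\prime\prime}_{00,10}$, and the vanishing $m(x_\circ,x_\bullet)=0$ coming from the conjugate labels on the two circles (Equation \eqref{eq:productandcirclebullet2}) is what saves the computation; your ``extremality'' shortcut would not. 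If you want to avoid redoing the LNS analysis, the efficient route is the one the paper takes: cite \cite{Nglipsar13} for this statement, and note that the exact (unsigned, boundary-free) invariance proved here for $\beta_{TLee}=\psi^{+}$ and $\overline{\beta}_{TLee}=\psi^{-}$ with the paper's choice of maps is a separate, self-contained result.
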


\begin{remark}
The maps associated to the Reidemeister moves (and also the set of Reidemiester moves) used by Lishitz, Ng and Sarkar, are different from the ones used in the present paper.
\end{remark}

As the reader may notice, the filtration plays a key role. Lipshitz, Ng and Sarkar defined also a family of auxiliary invariants $\psi^{\pm}_{p,q}$, for $p\leq 0 < q$, as the images of $\psi^{\pm}$ under the composition
\[\mathscr{F}_{sl(B)}C_{TLee}^{0}(\widehat{B},R) \hookrightarrow \mathscr{F}_{sl(B) + 2p}C_{TLee}^{0}(\widehat{B},R) \twoheadrightarrow \frac{\mathscr{F}_{sl(B)+ 2p}C_{TLee}^{0}(\widehat{B},R)}{\mathscr{F}_{sl(B) +2 q}C_{TLee}^{0}(\widehat{B},R)}\]
In particular, one obtains that $\psi^{+}_{0,1}(B) = \psi^{-}_{0,1}(B)$ can be identified with $\psi(B)$. As a consequence, the non-effectiveness of the Lipshitz-Ng-Sarkar invariants implies the non-effectiveness of (the homology class of) the Plamenevskaya invariant.
To this end, it has been proved by Lipshitz, Ng and Sarkar (\cite{Nglipsar13}) that $\psi^\pm$ cannot distinguish non-simple pairs obtained via negative flypes, and all non simple pairs whose underlying knot is a prime knot with crossing number $< 12$ with the exception of 29 cases (see \cite[Subsection 4.5]{Nglipsar13}).  In particular, the Plamenevskaya invariant (and its vanishing) is non-effective for the transverse links described above. Moreover,  Lipshitz, Ng and Sarkar proved that the Plamenvskaya invariant cannot distinguish transverse links which become transversely isotopic after a single negative stabilization. However, it is still unknown (to the best of the author's knowledge) whether there are non-simple pairs representing the remaining 29 knots with less than $12$ crossing, and if the Plamenevskaya invariant, its vanishing or the LNS invariants can be used to distinguish them.

\subsection*{Statement of results}

The aim of this paper is to investigate the transverse information contained in Khovanov-type homologies. We start by providing a machinery to produce transverse invariants in Khovanov-type homologies, generalising the Plamenevskaya and LNS invariants. One of the interesting features of our construction is that it can be generalised to equivariant Khovanov-Rozansky homologies (\cite{Krasner09}). This will be the subject of forthcoming work.

Going back to the present paper, in Section \ref{sec:invariancebeta} we show how to associate to each \emph{oriented link diagram} $D$, and each Frobenius algebra $\mathcal{F}$ (over the ring $R$) as in Subsection \ref{Sec:exFA}, two cycles denoted by $\beta_{\mathcal{F}}(D,R)$ and $\overline{\beta}_{\mathcal{F}}(D,R)$. One of the main results in this paper is the following

\begin{theorem}\label{theorem:main1}
Let $\mathcal{F}$ be a Frobenius algebra as in Subsection \ref{Sec:exFA}. Given a braid $B$, there exist two (possibly equal) cycles $ \beta_{\mathcal{F}}(B) = \beta_{\mathcal{F}}(\widehat{B},R)$, $ \overline{\beta}_{\mathcal{F}}(B) = \overline{\beta}_{\mathcal{F}}(\widehat{B},R)   \in C_{\mathcal{F}}^{0}(\widehat{B},R)$ such that: if the braids $B$ and $B^\prime$ are related by a sequence $\Sigma$ of \emph{transverse} Markov moves then the map
\[  \Phi_{\Sigma}: C_{\mathcal{F}}^{\bullet}(\widehat{B},R) \longrightarrow C_{\mathcal{F}}^{\bullet}(\widehat{B^\prime},R),\]
induced by $\Sigma$ is such that
\[\Phi_{\Sigma}(\beta_{\mathcal{F}}(B)) = \beta_{\mathcal{F}}(B^\prime)\qquad \Phi_{\Sigma}(\overline{\beta}_{\mathcal{F}}(B)) =\overline{\beta}_{\mathcal{F}}(B^\prime).  \]
Moreover, if $\Sigma$ consists of a single negative stabilization then
\[(x_1 - x_2) \Phi_{\Sigma}(\beta_{\mathcal{F}}(B)) = \pm\beta_{\mathcal{F}}(B) + d_{\mathcal{F}} x\quad (x_1 - x_2) \Phi_{\Sigma}(\overline{\beta}_{\mathcal{F}}(B^\prime)) =\pm \overline{\beta}_{\mathcal{F}}(B^\prime) + d_{\mathcal{F}} y,  \]
for some $x,\: y \in C_{\mathcal{F}}^\bullet(B^\prime)$, where $x_1$ and $x_2$ are defined as in Subsection \ref{Sec:exFA}.
\end{theorem}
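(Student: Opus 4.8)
The plan is to reduce everything to a Reidemeister-by-Reidemeister analysis of the cycles $\beta_{\mathcal{F}}$ and $\overline{\beta}_{\mathcal{F}}$. Since these cycles are defined at the level of the diagram (associated to the closure of a braid), the first step is to recall their explicit form: up to sign they are the generator of $C^{0}_{\mathcal{F}}$ supported on the oriented resolution of $\widehat B$, with each circle decorated by the distinguished element coming from the Frobenius structure (the analogue of Plamenevskaya's $x_{-}$-labelling, expressed in terms of the $x_1,x_2$ of Subsection \ref{Sec:exFA}). Because a sequence $\Sigma$ of transverse Markov moves is a composition of braid relations, conjugations, and positive (de)stabilizations, and the induced map $\Phi_{\Sigma}$ is the composite of the corresponding Reidemeister maps, it suffices to check the claimed equalities for each elementary move separately; functoriality of the construction then glues the pieces together.

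First I would handle conjugations and braid relations. Conjugation is a Reidemeister~II together with a planar isotopy, and the braid relations are coherent (braid-like) Reidemeister~II and~III moves; for each of these the oriented resolution of the source is carried isomorphically to the oriented resolution of the target, and one checks on the nose that the decorated generator goes to the decorated generator (no correction term, no sign subtlety beyond the chosen normalisation). This is where I would invoke the explicit local form of the Reidemeister maps recalled in Section~\ref{Sec:Khovanovhom}. Next comes the positive stabilization, which by Figure~\ref{fig:stabilizationasr1} is a positive Reidemeister~I. Here the oriented resolution acquires one extra circle; the positive R1 map sends the decorated generator of $\widehat B$ to the decorated generator of $\widehat{B^{\prime}}$ precisely because the distinguished labelling is compatible with the comultiplication/unit maps appearing in the R1 chain homotopy equivalence. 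So $\Phi_{\Sigma}(\beta_{\mathcal{F}}(B))=\beta_{\mathcal{F}}(B^{\prime})$ and likewise for $\overline{\beta}_{\mathcal{F}}$, giving the first displayed formula.

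The main obstacle is the negative stabilization. Here the self-linking number drops, so there is no hope of an exact equality; instead one must track what the negative-R1 map does to the oriented resolution. The negative R1 move replaces a strand by a strand with a negative kink, and in the cube of resolutions the oriented resolution of $\widehat{B^{\prime}}$ corresponds to the ``1''-resolution of the new crossing. The negative-R1 chain homotopy equivalence involves multiplication by $(x_1-x_2)$ (or rather the map that on the distinguished labels acts as multiplication by $x_1-x_2$ after identifying the merged circle), together with a homotopy correction. Concretely, I would write out the two-term subcomplex coming from the new crossing, identify $\beta_{\mathcal{F}}(B)$ with the image of $\beta_{\mathcal{F}}(B^{\prime})$ under the comultiplication-type map up to the factor $(x_1-x_2)$, and absorb the discrepancy into $d_{\mathcal{F}}x$; the sign $\pm$ records the orientation convention in the R1 map. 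The computation is local and finite, but it requires care because one is no longer working with honest (co)homology classes — the statement is at chain level modulo boundaries and modulo the $(x_1-x_2)$ multiple — so the bookkeeping of which term lands in which filtration level (as in Proposition~\ref{prop:LipshitzNgSarkar}) is the delicate point. The identical argument applied to $\overline{\beta}_{\mathcal{F}}$, using the mirror-symmetric labelling, yields the second displayed formula and completes the proof.
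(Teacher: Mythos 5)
Your overall strategy coincides with the paper's: Theorem \ref{theorem:main1} is proved there precisely by translating transverse Markov moves into positive first, coherent second, and braid-like third Reidemeister moves and then invoking the move-by-move Propositions \ref{proposition:firstbeta}, \ref{proposition:secondcoerbeta} and \ref{proposition:betathird}; your treatment of the positive and, especially, the negative first Reidemeister move matches the paper's computation, including the role of $(x_1-x_2)$ and the absorption of the discrepancy into a boundary. The one place where your sketch asserts rather than proves is the coherent second Reidemeister move, and this is not a cosmetic omission. The maps $\Psi_2$ and $\Phi_2$ are not mere identifications of oriented resolutions: each carries a saddle component into (or out of) the $01$-summand, so there \emph{is} a potential correction term, contrary to your claim of ``no correction term''. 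The content of Proposition \ref{proposition:secondcoerbeta} is that this term vanishes on $\beta_{\mathcal{F}}$, and that vanishing is exactly where the definition of the labelling enters: coherence of the move forces (via the Jordan curve theorem) the two arcs to lie on circles of opposite nesting parity, hence carrying \emph{conjugate} labels, and $m_{\mathcal{F}}(x_\circ,x_\bullet)=0$ by Equation \eqref{eq:productandcirclebullet2} kills the saddle term. With a uniform labelling (which your phrase ``the analogue of Plamenevskaya's $x_-$-labelling'' might suggest) the step fails, since $m_{\mathcal{F}}(x_\circ,x_\circ)=-(x_1-x_2)x_\circ$ is nonzero in general; so you must make the nesting-number-dependent labelling explicit. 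Relatedly, the braid-like third Reidemeister move is not checked ``on the nose'' in the paper but is reduced to the coherent second move via the categorified Kauffman trick and Lemma \ref{lemma:techthird}; your argument should either follow that reduction or supply the (nontrivial) direct verification.
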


\begin{remark}
In this paper, the maps associated to a generating set of Reidemeister moves are those commonly used through the literature (see for example \cite{ BarNatan05cob,Khovanov00, Turner04}). The maps associated to the other Reidemeister moves are obtained by composition. 
\end{remark}

The cycles  $ \beta_{\mathcal{F}}(B)$ and $ \overline{\beta}_{\mathcal{F}}(B)$ are called $\beta_{\mathcal{F}}$-invariants. The cycle $\psi(B)$ underlying the Plamenevskaya invariant belongs to this family since $\beta_{Kh}(B) = \overline{\beta}_{Kh}(B) = \psi (B)$. With an abuse of notation we refer to $\psi(B)$ as the Plamenevskaya invariant, and denote by $[\psi]$ the original Plamenevskaya invariant when confusion can arise. Similarly, the LNS invariants $\psi^{+}(B)$ and $\psi^{-}(B)$ are $\beta_{TLee}(B)$ and $\overline{\beta}_{TLee}(B)$, respectively.
Motivated by the common properties between the Plamenevskaya and the LNS invariants, we give the following definition.

\begin{definition}
Given a Frobenius algebra $\mathcal{F}$, a \emph{Plamenevskaya-type invariant} $x_{\mathcal{F}}$ is a function associating to each braid $B$ a chain $x_{\mathcal{F}}(B)\in C^{\bullet}_{\mathcal{F}}(\widehat{B},R)$, such that:
\begin{itemize}
\item[$\triangleright$] $x_{\mathcal{F}}(B)$ is non trivial;
\item[$\triangleright$] $x_{\mathcal{F}}(B)$ is an enhanced state, and its underlying resolution is the oriented resolution (see Subsection \ref{Subs:KhTypeHomologies} for the definitions);
\item[$\triangleright$] $x_{\mathcal{F}}(B)$ is invariant under transverse Markov moves in the sense of Theorem \ref{theorem:main1}.
\end{itemize}
\end{definition}
It turns out that the $\beta_{\mathcal{F}}$-invariants are essentially all the Plamenevskaya-type invariants. The second main result in this paper is the following
\begin{theorem}\label{theorem:main2}
Let $\mathcal{F}$ be a Frobenius algebra (as in Subsection \ref{Sec:exFA}) over $R$, and let $R$ be an UFD. Given a Plamenevskaya-type invariant $x_{\mathcal{F}}$, for each braid $B$ there exists $r = r(B) \in R$ such that: either $x_{\mathcal{F}}(B) = r \beta_{\mathcal{F}}(B)$ or $x_{\mathcal{F}}(B) = r \overline{\beta}_{\mathcal{F}}(B)$.
 In particular, there is a bijection between Plamenevskaya-type invariants and $R$-valued invariants for transverse braids.
\end{theorem}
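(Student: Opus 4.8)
The plan is to analyze the constraints imposed on a Plamenevskaya-type invariant $x_{\mathcal{F}}$ by the three defining conditions, and extract from them that $x_{\mathcal{F}}(B)$ must be an $R$-multiple of one of the two distinguished cycles. First I would fix a braid $B$ and look at the chain group supported on the oriented resolution of $\widehat{B}$. By the second bullet of the definition, $x_{\mathcal{F}}(B)$ is an enhanced state whose underlying resolution is the oriented one; more generally it is an $R$-linear combination of enhanced states on that resolution. The key structural fact I would establish (or recall from Section \ref{sec:invariancebeta}) is that the subspace of cycles in $C^0_{\mathcal{F}}(\widehat{B},R)$ supported on the oriented resolution which are also killed by all the outgoing differentials is a free $R$-module of rank two, spanned by $\beta_{\mathcal{F}}(B)$ and $\overline{\beta}_{\mathcal{F}}(B)$ (when these differ), or of rank one spanned by $\beta_{\mathcal{F}}(B) = \overline{\beta}_{\mathcal{F}}(B)$. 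This is where the specific form of the Frobenius algebras in Subsection \ref{Sec:exFA} enters: the oriented resolution of a braid closure is a disjoint union of circles, one per Seifert circle, with the outgoing saddles forced to respect the braid structure, and the cycle condition on each such saddle cuts the module of labelings down to the span of the ``all-$x_1$-like'' and ``all-$x_2$-like'' states, which are exactly $\beta$ and $\overline{\beta}$.

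Next I would use the invariance condition (third bullet, i.e. Theorem \ref{theorem:main1}) to pin down which element of this rank-$\leq 2$ module $x_{\mathcal{F}}(B)$ is. Write $x_{\mathcal{F}}(B) = a\,\beta_{\mathcal{F}}(B) + b\,\overline{\beta}_{\mathcal{F}}(B)$ with $a,b \in R$. Applying a transverse Markov move $\Sigma$ from $B$ to some $B'$ and comparing $\Phi_\Sigma(x_{\mathcal{F}}(B))$ with $x_{\mathcal{F}}(B')$ (using that $\Phi_\Sigma(\beta_{\mathcal{F}}(B)) = \beta_{\mathcal{F}}(B')$ and likewise for $\overline{\beta}$, and that $\Phi_\Sigma$ is an isomorphism on homology preserving the relevant structure), the coefficients $a,b$ are seen to be Markov-invariant, i.e. they depend only on the transverse link, not on the braid representative. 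The heart of the argument is then a local computation: I would exhibit, for a suitable braid (e.g. one obtained by a negative stabilization, or a simple conjugation/braid-relation configuration), a map under which $\beta_{\mathcal{F}}$ and $\overline{\beta}_{\mathcal{F}}$ behave differently enough — using the $(x_1 - x_2)$ factor appearing in the negative-stabilization formula of Theorem \ref{theorem:main1} — that a genuine mixed combination $a\beta + b\overline\beta$ with both $a,b$ nonzero and $a \neq \pm b$ would fail to transform as required by the Plamenevskaya-type axioms. Since $R$ is a UFD, I can factor out the gcd of $a$ and $b$; non-triviality of $x_{\mathcal{F}}(B)$ forces this gcd to be nonzero, and the local computation forces the cofactors to single out either $\beta_{\mathcal{F}}$ or $\overline{\beta}_{\mathcal{F}}$. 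This yields $x_{\mathcal{F}}(B) = r\,\beta_{\mathcal{F}}(B)$ or $x_{\mathcal{F}}(B) = r\,\overline{\beta}_{\mathcal{F}}(B)$ with $r = r(B) \in R$.

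Finally, for the bijection statement, I would check consistency: the function $B \mapsto r(B)$ is itself a transverse-braid invariant, because $\Phi_\Sigma$ sends $\beta$ to $\beta$ (resp. $\overline\beta$ to $\overline\beta$) on the nose, so the scalar $r(B)$ must agree for Markov-equivalent braids; conversely, any $R$-valued transverse-braid invariant $r$ produces a Plamenevskaya-type invariant by $B \mapsto r(B)\beta_{\mathcal{F}}(B)$ (and the non-triviality clause excludes the identically-zero scalar, or one restricts to the appropriate class). One subtlety is the case $\beta_{\mathcal{F}}(B) = \overline{\beta}_{\mathcal{F}}(B)$, where the ``either/or'' is vacuous and the two descriptions coincide; another is the choice between $\beta$ and $\overline\beta$ possibly varying with $B$, which must be ruled out (or absorbed) using invariance once more, since a transverse Markov move cannot interchange the two families.

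I expect the main obstacle to be the rank-two computation of the space of ``invariant cycles'' on the oriented resolution — i.e. proving that $\beta_{\mathcal{F}}$ and $\overline{\beta}_{\mathcal{F}}$ really do span all enhanced-state cycles supported there that are compatible with the outgoing differential — since this requires a careful bookkeeping of the Frobenius-algebra structure maps over the Seifert circles of a braid closure and is the step where the hypotheses on $\mathcal{F}$ (from Subsection \ref{Sec:exFA}) and on $R$ being a UFD are genuinely used. The Markov-invariance of the coefficients and the final local distinguishing computation are comparatively routine once that structural lemma is in place.
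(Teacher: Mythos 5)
Your central structural claim --- that the cycles of $C^{0}_{\mathcal{F}}(\widehat{B},R)$ supported on the oriented resolution form a free module of rank $\leq 2$ spanned by $\beta_{\mathcal{F}}(B)$ and $\overline{\beta}_{\mathcal{F}}(B)$ --- is false in general, and the step where you enlarge ``enhanced state'' to ``$R$-linear combination of enhanced states'' is exactly what breaks the argument. Already for the closure of $\sigma_1\in B_2$ with $\mathcal{F}=Kh$ the kernel of the single merge map contains $x_+\otimes x_- - x_-\otimes x_+$ in addition to $\beta=\overline{\beta}=x_-\otimes x_-$; and whenever $x_1-x_2$ is not a unit (e.g. $BN$ over $\mathbb{F}[U]$) the span of $\beta$ and $\overline{\beta}$ is a proper submodule of the cycle module even when the latter has rank $2$. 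The definition of a Plamenevskaya-type invariant is what saves the day: $x_{\mathcal{F}}(B)$ is required to be a \emph{single} enhanced state, i.e.\ a pure tensor $\bigotimes_\gamma \alpha_\gamma$, and the paper's proof works label-by-label on the circles rather than with the whole cycle module. Concretely (Lemma \ref{lemma:keylemmabetaunique}): invariance under a coherent second Reidemeister move performed on two arcs forces $m(\alpha_{\gamma_{\mathbf{a}}},\alpha_{\gamma_{\mathbf{b}}})=0$ (this is where the $01$-component of $\Psi_2$ must vanish), so both labels are zero divisors of $A=R[X]/\bigl((X-x_1)(X-x_2)\bigr)$; since $R$ is a UFD these lie in $(x_\circ)\cup(x_\bullet)$ and must occupy different ideals. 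Connectivity of the Seifert graph of a braid closure (Lemma \ref{lemma:uniquebeta}, Proposition \ref{proposition:uniquebeta}) then propagates one choice of ideal to a global choice of $\beta_{\mathcal{F}}$ versus $\overline{\beta}_{\mathcal{F}}$, up to a scalar $r\in R$. This is the precise place where the UFD hypothesis is used, which your sketch leaves unspecified.

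Your second mechanism --- ruling out mixed combinations $a\beta+b\overline{\beta}$ via the $(x_1-x_2)$ factor in the negative-stabilization formula --- also does not work: a Plamenevskaya-type invariant is only required to be invariant under \emph{transverse} Markov moves, and negative stabilization is not one of them; moreover any mixed combination $a\beta+b\overline{\beta}$ \emph{is} preserved on the nose by every transverse Markov move, so no invariance argument can exclude it. What excludes it is again the pure-tensor requirement (for braids on at least two strands such a sum is not an enhanced state). The remaining parts of your outline --- Markov-invariance of the scalar, the bijection with $R$-valued invariants, and the degenerate case $\beta_{\mathcal{F}}=\overline{\beta}_{\mathcal{F}}$ --- are fine, but the proof as proposed does not go through without replacing the rank-two lemma by the zero-divisor analysis of the labels.
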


The usage of the $\beta_{\mathcal{F}}$-invariants to distinguish transverse links is quite far from being practical. To distinguish two transverse links, one must verify that all the chain homotopy equivalences induced by sequences of either Markov or Reidemeister moves do not preserve the $\beta_{\mathcal{F}}$-invariants (see Subsection \ref{Subs:betaFinvariants}). 
In order to obtain a computable invariant, in Section \ref{sec:betaBN}, we set $\mathcal{F} = BN$. The corresponding $\beta$($=\beta_{BN}$)-invariants are equivalent to the LNS invariants. More precisely, we prove the following proposition.

\begin{proposition}\label{prop:equivalence}
Let $\Sigma$ be a sequence of Markov (resp. Reidemeister) moves from a braid $B$ to a braid $B^\prime$ (resp. from $\widehat{B}$ to $\widehat{B^\prime}$), and suppose that $sl(B) = sl(B^\prime)$. Denoted by
\[\Phi_{\Sigma}:C_{BN}^{\bullet,\bullet}(\widehat{B})\longrightarrow C_{BN}^{\bullet,\bullet}(\widehat{B^\prime})\quad \text{and}\quad \phi_{\Sigma}:C_{TLee}^{\bullet}(\widehat{B})\longrightarrow C_{TLee}^{\bullet}(\widehat{B^\prime})\]
the chain maps induced by $\Sigma$. Then,
\[ \Phi_{\Sigma}(\beta(B)) = \beta(B^\prime)\  \Leftrightarrow \  \phi_{\Sigma}(\psi^{+}(B)) = \psi^{+}(B^\prime)\]
and
\[ \Phi_{\Sigma}(\overline{\beta}(B)) = \overline{\beta}(B^\prime)\ \Leftrightarrow \   \phi_{\Sigma}(\psi^{-}(B)) = \psi^{-}(B^\prime)\]
\end{proposition}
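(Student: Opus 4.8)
The plan is to exhibit an explicit relationship between the Bar-Natan complex $C_{BN}^{\bullet,\bullet}$ and the (twisted) Lee complex $C_{TLee}^{\bullet}$ that carries $\beta(B)$ to $\psi^{+}(B)$ and $\overline{\beta}(B)$ to $\psi^{-}(B)$, and then to argue that this relationship is natural with respect to the chain maps induced by Reidemeister and Markov moves. Recall that both $BN$ and $TLee$ are deformations of Khovanov homology obtained by adjoining a deformation parameter; over the appropriate ground ring one has $TLee \cong BN$ after inverting or specialising that parameter (this is the standard Bar-Natan/Lee comparison, essentially Lee's change of basis applied fibrewise over the Bar-Natan parameter). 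First I would set up a ring homomorphism (or a localisation/base-change square) $\rho\colon R_{BN} \to R_{TLee}$ together with an induced isomorphism of complexes $\Theta_D\colon C_{BN}^{\bullet}(D)\otimes_{R_{BN}} R_{TLee} \xrightarrow{\ \sim\ } C_{TLee}^{\bullet}(D)$ for every diagram $D$, and check that $\Theta_D$ intertwines the enhanced-state generators so that $\Theta_{\widehat{B}}(\beta(B)) = \psi^{+}(B)$ and $\Theta_{\widehat{B}}(\overline{\beta}(B)) = \psi^{-}(B)$. Both sides are, by construction, enhanced states supported on the oriented resolution with the two canonical ``coloured'' labellings of the Seifert circles, so this identification is a finite bookkeeping check on the two distinguished generators and the normalisation of the Lee/Bar-Natan change of basis.

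The second ingredient is the naturality of the maps associated to moves. Since in this paper the maps $\Phi_\Sigma$ (on $C_{BN}$) and $\phi_\Sigma$ (on $C_{TLee}$) are both defined move-by-move from the \emph{same} generating set of Reidemeister moves, using the standard cobordism-to-algebra recipe of \cite{BarNatan05cob, Khovanov00}, the identification $\Theta$ commutes with them: for each elementary move $m$ one has $\Theta_{D'}\circ \Phi_m = \phi_m\circ \Theta_D$ (after the base change). I would verify this for each generator of the move set — the three Reidemeister moves and the stabilisation move — which reduces to the observation that the local chain maps are built from the \emph{same} Frobenius-algebra structure maps, transported along $\rho$. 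Composing, $\Theta_{D'}\circ\Phi_\Sigma = \phi_\Sigma\circ\Theta_D$ for any sequence $\Sigma$.

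Granting these two points, the proposition is immediate: since $\Theta$ is an isomorphism of complexes, $\Phi_\Sigma(\beta(B)) = \beta(B')$ holds if and only if $\Theta_{\widehat{B'}}\Phi_\Sigma(\beta(B)) = \Theta_{\widehat{B'}}(\beta(B'))$, which by naturality and the generator identification is exactly $\phi_\Sigma(\psi^{+}(B)) = \psi^{+}(B')$; the $\overline{\beta}$/$\psi^{-}$ case is identical. The hypothesis $sl(B) = sl(B')$ is used to ensure that the normalisation (homological and quantum shifts, and the powers of the deformation parameter appearing in $\Theta$) match up on the nose, so that no spurious scalar or filtration correction term intervenes — this is the one place where the $sl$-preservation is essential rather than cosmetic. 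The main obstacle I anticipate is precisely this last point: pinning down the base-change isomorphism $\Theta$ with the correct grading/parameter normalisation so that it is \emph{strictly} compatible with the move maps used here (whose conventions, as the remark after Proposition \ref{prop:LipshitzNgSarkar} warns, differ from those of Lipshitz--Ng--Sarkar), rather than compatible only up to homotopy or up to a unit. Handling the stabilisation move — where the two complexes are compared on diagrams of different sizes — and tracking the parameter $x_1 - x_2$ there will require the most care, but the $sl$-preserving hypothesis sidesteps the genuinely delicate negative-stabilisation behaviour recorded in Theorem \ref{theorem:main1}.
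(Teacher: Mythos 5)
Your forward implication is sound, and your overall framework (compare $BN$ and $TLee$ via a base change and use naturality of the move maps) matches the paper's setup: the comparison map is the quotient $\pi_{TLee}\colon C_{BN}^{\bullet}(D)\to C_{TLee}^{\bullet}(D)=C_{BN}^{\bullet}(D)/(U-1)C_{BN}^{\bullet}(D)$ of \eqref{Eq:BNTLee}, it satisfies $\pi_{TLee}(\beta)=\psi^{+}$ and $\pi_{TLee}(\overline{\beta})=\psi^{-}$, and it intertwines $\Phi_{\Sigma}$ with $\phi_{\Sigma}$. But there is a genuine gap in the converse implication. Your $\Theta$ is an isomorphism only \emph{after} tensoring with $R_{TLee}$; as a map out of $C_{BN}^{\bullet}(\widehat{B^\prime})$ itself it is the surjection $\pi_{TLee}$, whose kernel $(U-1)C_{BN}^{\bullet}(\widehat{B^\prime})$ is large. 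Hence the step ``$\Phi_{\Sigma}(\beta(B))=\beta(B^\prime)$ if and only if their images under $\Theta_{\widehat{B^\prime}}$ agree'' is false as written: two distinct Bar-Natan chains can have equal images in the twisted Lee complex, so $\phi_{\Sigma}(\psi^{+}(B))=\psi^{+}(B^\prime)$ does not by itself force $\Phi_{\Sigma}(\beta(B))=\beta(B^\prime)$.

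What rescues the converse is an injectivity statement on homogeneous pieces, which is the actual content of the paper's argument (Lemma \ref{lemma:uniquelift}): since $C_{BN}^{0,\bullet}(\widehat{B^\prime})$ is a free graded $\mathbb{F}[U]$-module supported in quantum degrees of a single parity, every class in $\mathscr{F}_{i}C_{TLee}^{0}(\widehat{B^\prime})$ has a \emph{unique} homogeneous lift of quantum degree $i$. One then observes that $\Phi_{\Sigma}$ is a graded chain map, so $\Phi_{\Sigma}(\beta(B))$ is homogeneous of quantum degree $sl(B)=sl(B^\prime)$, and that $\beta(B^\prime)$ is the unique homogeneous lift of $\psi^{+}(B^\prime)$ in that same degree; agreement of the images under $\pi_{TLee}$ then forces agreement of the chains. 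This is where the hypothesis $sl(B)=sl(B^\prime)$ and the parity of the quantum support genuinely enter — not merely as a normalisation of shifts, as your sketch suggests. Your proposal needs this uniqueness-of-lift step (or an equivalent injectivity argument) spelled out; without it the backward direction of both equivalences is unproved.
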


To extract new information from the $\beta$-invariants we make use of the $\mathbb{F}[U]$-module structure of Bar-Natan homology. In particular, we define two (non-negative) integer invariants, called $c$-invariants. The $c$-invariants determine the vanishing of the Plamenevskaya invariant $[\psi(B)]$ (Proposition \ref{prop:psiebeta}). However, \emph{a priori} the $c$-invariants contain more information than the vanishing of $\psi$.

\begin{remark}
There is a class of knots (non-right-veering knots, see \cite{Plamenevskaya15}) for which both $[\psi]$ and $\widehat{\theta}$ vanish on all braid representatives. In particular, both $[\psi]$ and $\widehat{\theta}$ do not provide any information on transverse representatives of these knots. It will be interesting to understand which kind of information the $c$-invariants can provide in this case. As of now it is complicated to find examples of non-equivalent transverse braids representing non-right-veering knots, whose Khovanov homology (and $c$-invariants) can be computed.
\end{remark}

Building on the result of Lipshitz, Ng and Sarkar concerning the flype invariance (in the weak sense of Proposition \ref{prop:LipshitzNgSarkar}) of $\psi^{\pm}$, we prove our third main result.

\begin{proposition}\label{proposition:main3}
The $c$-invariants are invariant under negative flypes.
\end{proposition}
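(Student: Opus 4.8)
The plan is to reduce the statement to an equality of $\beta$-invariants, up to the ambiguities permitted by Theorem~\ref{theorem:main1}, and then to show that this ambiguity does not affect the two non-negative integers extracted from the $\mathbb{F}[U]$-module structure. First I would recall the combinatorial description of a negative flype: it is a local modification of a braid word which, by the work of Birman–Menasco, preserves both the underlying link and the self-linking number. Concretely, Lipshitz, Ng and Sarkar exhibit a pair of braids $B$, $B^{\prime}$ related by a negative flype together with an explicit sequence of moves (second and third Reidemeister moves, conjugations, and a \emph{pair} of a negative stabilization and a negative destabilization) connecting their closures; crucially $sl(B)=sl(B^{\prime})$. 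Plugging $\mathcal{F}=BN$ into Theorem~\ref{theorem:main1} and using Proposition~\ref{prop:equivalence}, the flype invariance of $\psi^{\pm}$ established by Lipshitz, Ng and Sarkar (in the weak sense of Proposition~\ref{prop:LipshitzNgSarkar}) translates into the statement that the induced map $\Phi_{\Sigma}$ sends $\beta(B)$ to $\pm\beta(B^{\prime}) + d_{BN}x$ for some chain $x$ (and likewise for $\overline{\beta}$).

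The heart of the argument is then to check that a $c$-invariant, being defined from the $\mathbb{F}[U]$-module $H_{BN}$, is insensitive to (i) adding a boundary and (ii) multiplying by the unit $\pm 1$. Since the $c$-invariants are defined via the image of $\beta$ in Bar-Natan \emph{homology} and its divisibility/torsion-order properties with respect to the action of $U=x_1-x_2$, passing from a cycle to its homology class already kills the boundary term, and multiplication by $-1\in\mathbb{F}^{\times}$ (or, in characteristic different from two, an arbitrary sign) does not change the $U$-divisibility of a homology class. Here I would invoke Proposition~\ref{prop:equivalence} once more to guarantee that $\Phi_{\Sigma}$ is the chain map induced by the flype and that it is a chain homotopy equivalence (since it is a composition of the invariance maps attached to Reidemeister moves), so it induces an $\mathbb{F}[U]$-module isomorphism on $H_{BN}$; an isomorphism of modules preserves the numerical invariants attached to a distinguished element up to the scalar/boundary ambiguity just discussed.

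One subtlety deserving care is the pair of negative (de)stabilizations appearing in the flype: by the second part of Theorem~\ref{theorem:main1}, a single negative stabilization only yields $(x_1-x_2)\Phi_{\Sigma}(\beta)=\pm\beta + d_{BN}x$, i.e. it introduces a factor of $U$. I would argue that in a flype the negative stabilization and the negative destabilization occur as a cancelling pair, so the two factors of $U$ introduced are inverse to each other on homology after the destabilization map is applied — equivalently, one checks directly on the $TLee$ side, using Proposition~\ref{prop:LipshitzNgSarkar} (whose stabilization clause is stated with no loss of filtration level precisely because $sl$ is preserved) and then transports the conclusion back via Proposition~\ref{prop:equivalence}. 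The main obstacle I anticipate is exactly this bookkeeping of the $U$-powers: one must make sure that the weak invariance ``up to $d_{BN}$ and up to $\pm 1$'' is genuinely enough, which amounts to verifying that the two integers comprising the $c$-invariants are defined purely in terms of the homology class and its $U$-divisibility (so that units and boundaries are irrelevant) and that the flype map, being an isomorphism on $H_{BN}$, cannot alter $U$-divisibility. Once that is in place, the proposition follows by combining the flype invariance of $\psi^{\pm}$ from \cite{Nglipsar13}, Proposition~\ref{prop:equivalence}, and the definition of the $c$-invariants.
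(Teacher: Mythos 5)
Your endgame is right and matches the paper's: the $c$-invariants are the $U$-divisibilities of $[\beta]$ and $[\overline{\beta}]$ in $H_{BN}$, so once one has an $\mathbb{F}[U]$-module isomorphism carrying these classes to $\pm$ the corresponding classes of the flyped braid (boundaries dying in homology, signs being units), the proposition follows. This is exactly how the paper argues, via Proposition \ref{prop:flype invariance}.

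The gap is in how you propose to obtain that isomorphism. First, Proposition \ref{prop:equivalence} is an equivalence between \emph{exact} preservation of $\beta$ and exact preservation of $\psi^{+}$; it does not transport the weaker statement ``$\pm\psi^{\pm}+d_{TLee}x_{\pm}$'' from $TLee$ to $BN$. The paper instead lifts the LNS chain homotopy equivalence itself to $C_{BN}^{\bullet,\bullet}$ using the unique homogeneous lift (Lemma \ref{lemma:uniquelift}), and the argument hinges on the precise content of \cite[Theorem 4.15]{Nglipsar13}: the error terms $x_{\pm}$ lie at filtration level \emph{exactly} $sl$, so their lifts have quantum degree $sl$ and one gets $F(\beta(B))=\pm\beta(B^{\prime})+d_{BN}\widetilde{x_{+}}$ with no power of $U$. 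Second, your fallback of decomposing the flype into Markov moves and ``cancelling'' the two factors of $U$ coming from the negative stabilization/destabilization pair does not work: the stabilization clauses of Proposition \ref{prop:LipshitzNgSarkar} place the error terms at filtration level $sl(B)-2$, and $U$ is not invertible on $H_{BN}^{\bullet,\bullet}$ (which has $U$-torsion), so this route only yields $F_{*}(U[\beta(B)])=\pm U[\beta(B^{\prime})]$. That is insufficient: in the presence of $U$-torsion the divisibility of $U[\beta]$ does not determine the divisibility of $[\beta]$ (one can have $[\beta]=U^{k+1}z+U^{k}w$ with $U^{k+1}w=0$, so that $[\beta]$ has divisibility $k$ while $U[\beta]=U^{k+2}z$), and this is precisely the failure mode flagged in the remark following Proposition \ref{prop:flype invariance}. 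The repair is to take \cite[Theorem 4.15]{Nglipsar13} as the black-box input rather than a stabilization/destabilization bookkeeping, and to lift it via Lemma \ref{lemma:uniquelift}.
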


\begin{remark}
We prove a slightly stronger statement: given two braids related by a negative flype, there exists an isomorphism of bi-graded modules between their Bar-Natan homologies which preserves the homology classes of the $\beta$-invariants (see Proposition \ref{prop:flype invariance} for a precise statement).
\end{remark}

Moreover, we provide some sufficient conditions for the $c$-invariants to be non effective. We say that a link $\lambda$ is \emph{$c$-simple} if all transverse links representing $\lambda$ with the same self-linking number have the same $c$-invariants.

\begin{proposition}\label{corollary:csimpleknotsKh}
Let $\kappa$ be an oriented knot. If $\kappa$ satisfies one of the following conditions
\begin{enumerate}
\item $H_{Kh}^{0,j}(\kappa,\mathbb{F}) \equiv 0$  for each $j < s(\kappa) - 1$;
\item $H_{Kh}^{-1,j}(\kappa,\mathbb{F}) \equiv 0$  for each $j < s(\kappa) - 3$;
\item the torsion sub-module of $H^{0,\bullet}_{BN}(\kappa,\mathbb{F}[U])$ is isomorphic to the $\mathbb{F}[U]$-module
\[M = \bigoplus_{i=1}^{m} \frac{\mathbb{F}[U]}{(U^{2k_{i}})},\]
for some $m$, $k_{1}$, ..., $k_{m}\in\mathbb{N}\setminus \{ 0 \}$, and $H_{Kh}^{-1,j}(\kappa,\mathbb{F}) \equiv 0$ for each $j < s(\kappa) - 5$;
\end{enumerate}
where $s(\kappa) = s(\kappa, \mathbb{F})$ is the Rasmussen invariant (\cite{Rasmussen10}), then $\kappa$ is $c$-simple. In particular, all $Kh$-thin and $Kh$-pseudo-thin (i.e. $H_{Kh}^{0,\bullet}(\kappa, \mathbb{F})$ is concentrated in two quantum degrees) knots are $c$-simple. Moreover, in the cases (1)-(3) we have
\[ s(\kappa, \mathbb{F}) -1 = sl(B) +2c_{\mathbb{F}}(B),\]
for each braid representative $B$ of $\kappa$.
\end{proposition}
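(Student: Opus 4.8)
The plan is to reduce everything to the interplay between the $\beta$-invariant and the $\mathbb{F}[U]$-module structure of Bar-Natan homology, exploiting the fact that $H_{BN}(\kappa,\mathbb{F}[U])$ is, after inverting $U$, a free rank-two module whose generators sit in quantum degrees $s(\kappa)\pm 1$ (this is the Bar-Natan/Lee-type structure theorem, which underlies the definition of the Rasmussen invariant). First I would recall from Section \ref{sec:betaBN} how the $c$-invariant $c_{\mathbb{F}}(B)$ is extracted: the cycle $\beta(B)$ lives in filtration level $sl(B)$, and $c_{\mathbb{F}}(B)$ measures how far one must push $[\beta(B)]$ up the $U$-tower before it becomes $U$-divisible in the appropriate sense — equivalently, $c_{\mathbb{F}}(B)$ is governed by the torsion that $[\beta(B)]$ generates in $H^{0,\bullet}_{BN}(\kappa,\mathbb{F}[U])$ together with the quantum degree at which the free part is detected. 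Granting that $[\beta(B)]$ always has non-trivial image in the free part (this is where non-triviality of the Plamenevskaya-type cycle is used), the free part is detected exactly in quantum degree $s(\kappa)-1$, so the formula $s(\kappa)-1 = sl(B) + 2c_{\mathbb{F}}(B)$ is forced once we know $[\beta(B)]$ is not $U$-divisible beyond that point, i.e. once we control the torsion submodule.

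Next I would handle the three hypotheses in turn, each of which is designed to rule out ``bad'' torsion near the relevant degree. For (1): the vanishing $H^{0,j}_{Kh}(\kappa,\mathbb{F})\equiv 0$ for $j<s(\kappa)-1$ says that in homological degree $0$ there is nothing in low quantum degree, so via the Bar-Natan spectral sequence (the one whose $E_1$ page is Khovanov homology and which converges to $H_{BN}$) there is no room for $[\beta(B)]$ to be killed or pushed down: it must represent the free generator in degree $s(\kappa)-1$ and carry no torsion, giving $c_{\mathbb{F}}(B)=0$ and $sl(B)=s(\kappa)-1$. For (2) and (3) the argument is the same in spirit but one allows $[\beta(B)]$ to sit slightly lower and be connected to the free part by $U$-multiplication: the hypotheses on $H^{-1,j}_{Kh}$ control the possible differentials/extensions coming into degree $0$ from degree $-1$ in the spectral sequence, hence control the length of the $U$-tower below $[\beta(B)]$; in case (3) the explicit form $M=\bigoplus \mathbb{F}[U]/(U^{2k_i})$ of the torsion (all exponents even) is exactly what is needed so that the push-up length is an integer $c_{\mathbb{F}}(B)$ and $sl(B)+2c_{\mathbb{F}}(B)$ lands on $s(\kappa)-1$ rather than on $s(\kappa)-2$. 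In each case, $c$-simplicity then follows because the right-hand side $s(\kappa)-1-sl(B)$ is manifestly a transverse invariant (both $s$ and $sl$ are), so $c_{\mathbb{F}}(B)=\tfrac12(s(\kappa)-1-sl(B))$ depends only on the transverse link, not on the chosen braid representative.

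Finally, for the ``in particular'' clauses: a $Kh$-thin knot has $H^{0,\bullet}_{Kh}$ supported in quantum degrees $s(\kappa)\pm 1$ and $H^{-1,\bullet}_{Kh}$ correspondingly narrow, so hypothesis (1) (or (2)) applies directly; a $Kh$-pseudo-thin knot by definition has $H^{0,\bullet}_{Kh}(\kappa,\mathbb{F})$ concentrated in two quantum degrees, which must then be $s(\kappa)\pm 1$ (since the free part forces a generator in $s(\kappa)-1$ and the pairing/duality forces the partner), and again (1) applies. The main obstacle I anticipate is the careful bookkeeping in case (3): one must show that the torsion submodule being of the special form $\bigoplus\mathbb{F}[U]/(U^{2k_i})$ genuinely prevents $[\beta(B)]$ from being detected one quantum degree too low, which requires pinning down precisely how $[\beta(B)]$ interacts with the torsion tower — i.e. that $[\beta(B)]$ is not itself a torsion class of odd ``height'' that would shift the parity. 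This is exactly the point where one needs the structural input about $\beta$ from Propositions \ref{prop:equivalence} and \ref{prop:psiebeta}, rather than a purely formal module-theoretic argument, and I would expect it to be the most delicate step of the proof.
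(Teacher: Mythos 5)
The overall strategy you outline --- fix the $\mathbb{F}[U]$-module decomposition of $H^{0,\bullet}_{BN}$ with free part in quantum degrees $s(\kappa)\pm 1$, use homogeneity and non-torsionness of $[\beta(B)]$ to force $s(\kappa)-1=sl(B)+2c_{\mathbb{F}}(B)$ once the degree-$0$ torsion generators are known to sit in quantum degree $\geq s(\kappa)-1$, and then note that this formula makes $c_{\mathbb{F}}$ a function of the classical invariants --- is indeed the paper's route (via Proposition \ref{proposition:csimpleknotsBNs}). But two steps are wrong or missing. First, your case (1) concludes $c_{\mathbb{F}}(B)=0$ and $sl(B)=s(\kappa)-1$ for \emph{every} braid representative $B$. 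This cannot hold: $sl$ drops by $2$ under each negative stabilization, so most representatives have $sl(B)<s(\kappa)-1$, and for those the correct conclusion is $c_{\mathbb{F}}(B)=\tfrac12(s(\kappa)-1-sl(B))>0$. The class $[\beta(B)]$ lives in quantum degree $sl(B)$, not $s(\kappa)-1$; it is a $U$-power multiple of a class hitting the degree-$(s(\kappa)-1)$ free generator, not that generator itself. The actual mechanism is to expand $[\beta]=U^{c}\bigl(\alpha_1U^{r_1}\tilde f_1+\alpha_2U^{r_2}\tilde f_2+\sum_i\gamma_iU^{k_i}\tilde e_i\bigr)$ with at least one exponent zero, read off $q_i-2k_i=s-1-2r_2=s+1-2r_1=sl+2c$ from homogeneity, and use $q_i\geq s-1$ to force $k_i\geq r_2$ and hence $r_2=0$.

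Second, the quantitative heart of the proof is precisely the translation of hypotheses (1)--(3) into the bound $q_i\geq s(\kappa)-1$ on torsion generators, and your proposal leaves this at the level of ``the spectral sequence leaves no room.'' The paper instead uses $C_{Kh}^{\bullet,\bullet}=C_{BN}^{\bullet,\bullet}\otimes_{\mathbb{F}[U]}\mathbb{F}[U]/(U)$ and the K\"unneth sequence: a torsion summand $\mathbb{F}[U]/(U^{t_i})(0,q_i)$ contributes an $\mathbb{F}$ to $H^{0,q_i}_{Kh}$ and a Tor class to $H^{-1,q_i-2t_i}_{Kh}$. Thus (1) gives $q_i\geq s-1$ outright, (2) gives $q_i\geq s-3+2t_i\geq s-1$, and (3) gives $q_i\geq s-5+4k_i\geq s-1$ using $t_i=2k_i\geq 2$. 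In particular, your explanation of why the evenness of the torsion orders matters in (3) --- a parity question of whether $sl+2c$ ``lands on $s-1$ or $s-2$'' --- is not the right reason ($sl+2c$ is automatically odd for a knot); evenness matters because it pushes the Tor classes at least four quantum degrees below $q_i$, which is exactly what lets the weaker vanishing range $j<s-5$ still pin $q_i$ above $s-1$. Finally, the structural input you invoke at the end is not Propositions \ref{prop:equivalence} and \ref{prop:psiebeta} but the earlier fact that $[\beta]$ and $[\overline{\beta}]$ span a free rank-$2$ submodule of $H^{0,\bullet}_{BN}$, which guarantees the free-part coefficients are not both zero.
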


Since $[\psi(B)]=0$ if and only if $c_{\mathbb{F}}(B)>0$ (cf. Proposition \ref{prop:psiebeta}), we obtain the following result on the vanishing of $[\psi]$. The corresponding result in the special case of $Kh$-thin knots is due to Plamenevskaya (\cite[Theorem 1.2]{Plamenevskaya15}).

\begin{corollary}\label{cor:psine0}
Let $\kappa$ be a knot satisfying one among the conditions (1), (2) or (3) in Proposition \ref{corollary:csimpleknotsKh}. For each braid $B$ representing $\kappa$, we have $[\psi(B)] \ne 0$  if, and only if, $s(\kappa,\mathbb{F}) -1 = sl(B)$.
\end{corollary}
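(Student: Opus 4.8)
The plan is to deduce Corollary \ref{cor:psine0} directly from Proposition \ref{corollary:csimpleknotsKh} together with the relation $[\psi(B)]=0 \Leftrightarrow c_{\mathbb{F}}(B)>0$ recorded in Proposition \ref{prop:psiebeta}. Fix a knot $\kappa$ satisfying one of the conditions (1)--(3) and a braid representative $B$. Proposition \ref{corollary:csimpleknotsKh} gives the identity
\[
s(\kappa,\mathbb{F}) - 1 = sl(B) + 2\,c_{\mathbb{F}}(B).
\]
Since $c_{\mathbb{F}}(B)$ is by definition a non-negative integer, this equation immediately yields the equivalence $c_{\mathbb{F}}(B) = 0 \Leftrightarrow s(\kappa,\mathbb{F}) - 1 = sl(B)$, and hence $c_{\mathbb{F}}(B) > 0 \Leftrightarrow s(\kappa,\mathbb{F}) - 1 > sl(B)$ (equivalently $\neq$, since the difference is always $\geq 0$). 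Combining with Proposition \ref{prop:psiebeta}, which says $[\psi(B)] \neq 0$ precisely when $c_{\mathbb{F}}(B) = 0$, we conclude that $[\psi(B)] \neq 0$ if and only if $s(\kappa,\mathbb{F}) - 1 = sl(B)$, as claimed.

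The only genuine content to be supplied is therefore the displayed formula $s(\kappa,\mathbb{F}) - 1 = sl(B) + 2c_{\mathbb{F}}(B)$ under hypotheses (1)--(3), which is the last assertion of Proposition \ref{corollary:csimpleknotsKh}; so strictly speaking Corollary \ref{cor:psine0} is a formal consequence of the preceding proposition, and the write-up can be a single short paragraph. For completeness one should also note that the equivalence in Proposition \ref{prop:psiebeta} is stated for the chosen coefficient field $\mathbb{F}$, matching the $s(\kappa,\mathbb{F})$ appearing in Proposition \ref{corollary:csimpleknotsKh}; there is no compatibility issue since both invariants are taken over the same $\mathbb{F}$ throughout.

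There is no serious obstacle here: the corollary is essentially a restatement extracting the Plamenevskaya-vanishing consequence of the $c$-invariant computation. The only point requiring a word of care is the direction of the inequality — one must invoke that $c_{\mathbb{F}}(B) \geq 0$ (part of its definition as a non-negative integer invariant) so that the equation $s - 1 = sl(B) + 2c_{\mathbb{F}}(B)$ forces $s - 1 \geq sl(B)$ always, with equality exactly when the $c$-invariant vanishes. Given that, the biconditional in the statement follows at once, and the reference to Plamenevskaya's Theorem 1.2 of \cite{Plamenevskaya15} for $Kh$-thin knots is subsumed because $Kh$-thin knots fall under case (1) (or (2)) by the discussion in Proposition \ref{corollary:csimpleknotsKh}.
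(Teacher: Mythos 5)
Your proposal is correct and is exactly the paper's (implicit) argument: the corollary follows by combining the identity $s(\kappa,\mathbb{F})-1 = sl(B)+2c_{\mathbb{F}}(B)$ from Proposition \ref{corollary:csimpleknotsKh} with the equivalence $[\psi(B)]=0 \Leftrightarrow c_{\mathbb{F}}(B)>0$ from Proposition \ref{prop:psiebeta}. Nothing further is needed.
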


Finally, from the analysis of the Khovanov homology of all small knots (i.e. prime knots with less that $12$ crossings, up to mirror), we shall prove that they satisfy the hypotheses of Proposition \ref{corollary:csimpleknotsKh} (see Corollary \ref{corollary:allknotswithlessthan12arecsimple}). As a consequence, we obtain their $c$-simplicity and the following result.

\begin{corollary}\label{corollary:vanishingofpsiineff}
Let $\mathbb{F}$ be a field with $char(\mathbb{F}) \ne 2$, and let $\kappa$ be (up to mirror) a prime knot with less than $12$ crossings. Given a braid $B$ representing $\kappa$, the class $[\psi(B)]$ is trivial (over $\mathbb{F}$) if, and only if, $s(\kappa,\mathbb{F}) -1 = sl(B)$. In particular, the vanishing of $[\psi]$ (over $\mathbb{F}$) is a non-effective invariant for all such $\kappa$'s.
\end{corollary}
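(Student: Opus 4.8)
The plan is to split the corollary into its two assertions — the biconditional governing the triviality of $[\psi(B)]$ over $\mathbb{F}$, and the consequent non-effectiveness of that vanishing — and to handle the first by reduction to results already available, concentrating the real work in the finite verification behind Corollary~\ref{corollary:allknotswithlessthan12arecsimple}. For the biconditional: by Corollary~\ref{corollary:allknotswithlessthan12arecsimple} every prime knot $\kappa$ with crossing number $<12$ (up to mirror) satisfies at least one of conditions~(1)--(3) of Proposition~\ref{corollary:csimpleknotsKh}, so Corollary~\ref{cor:psine0} applies to $\kappa$ and yields, for every braid $B$ with $\widehat{B}=\kappa$, that $[\psi(B)]\neq 0$ exactly when $sl(B)=s(\kappa,\mathbb{F})-1$. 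The identity $s(\kappa,\mathbb{F})-1=sl(B)+2c_{\mathbb{F}}(B)$ from Proposition~\ref{corollary:csimpleknotsKh}, together with $c_{\mathbb{F}}(B)\ge 0$, gives $sl(B)\le s(\kappa,\mathbb{F})-1$ for all braid representatives; hence Corollary~\ref{cor:psine0} is equivalent to the assertion that $[\psi(B)]$ is trivial over $\mathbb{F}$ if and only if $sl(B)\neq s(\kappa,\mathbb{F})-1$. This disposes of the first half and is purely formal once Corollary~\ref{corollary:allknotswithlessthan12arecsimple} is in place.

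To establish Corollary~\ref{corollary:allknotswithlessthan12arecsimple} I would run through the finite list of prime knots with $<12$ crossings (up to mirror) and verify, for each, that one of conditions~(1)--(3) of Proposition~\ref{corollary:csimpleknotsKh} holds, reading the Khovanov homology off the standard tabulations (the Knot Atlas, Bar-Natan's \texttt{KnotTheory} computations). The $Kh$-thin knots — every alternating knot with $<12$ crossings and the many thin non-alternating ones — are immediate: then $H_{Kh}^{0,\bullet}(\kappa,\mathbb{F})$ is supported in two adjacent quantum degrees around $s(\kappa,\mathbb{F})$, so $H_{Kh}^{0,j}(\kappa,\mathbb{F})=0$ for $j<s(\kappa,\mathbb{F})-1$ and condition~(1) holds; the $Kh$-pseudo-thin knots fall under the same clause. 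What is left is the short list of homologically thick small knots (beginning with $8_{19}$ and $9_{42}$, and continuing with the thick $10$- and $11$-crossing knots), to be checked one at a time: first against condition~(1), then~(2) — the vanishing of $H_{Kh}^{0,j}$, resp.\ $H_{Kh}^{-1,j}$, below $s(\kappa,\mathbb{F})-1$, resp.\ $s(\kappa,\mathbb{F})-3$ — and, for those failing both, against condition~(3), which additionally requires the torsion submodule of $H^{0,\bullet}_{BN}(\kappa,\mathbb{F}[U])$ to be a sum of summands $\mathbb{F}[U]/(U^{2k_i})$ and $H_{Kh}^{-1,j}(\kappa,\mathbb{F})=0$ for $j<s(\kappa,\mathbb{F})-5$.

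The main obstacle is precisely this last step. Verifying condition~(3) for a given thick knot amounts to determining the $\mathbb{F}[U]$-module structure of its Bar-Natan homology in homological degree $0$ — equivalently, controlling the $U$-action, i.e.\ the higher differentials of the Bar-Natan spectral sequence starting from $H_{Kh}(\kappa,\mathbb{F})$ — and confirming that all of its torsion is ``even'' (the exponents of $U$ are even). Since the list of thick small knots is short and their Khovanov homology is explicitly known, this is a finite, if laborious, verification rather than a conceptual difficulty; it is also exactly where the hypothesis $char(\mathbb{F})\neq 2$ is used, because over $\mathbb{F}_2$ the Bar-Natan (and Lee) deformation degenerates differently, the quantum grading of the distinguished Bar-Natan generator need not sit where expected, and extra $2$-torsion phenomena occur, so both Proposition~\ref{corollary:csimpleknotsKh} and the module-structure facts it rests on are applied over a field of odd characteristic.

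Finally, non-effectiveness follows formally from the biconditional. Let $(T,T')$ be a non-simple pair whose underlying knot $\kappa$ is a prime knot with $<12$ crossings, and let $B,B'$ be braid representatives of $T,T'$. By the definition of a non-simple pair, $\widehat{B}$ and $\widehat{B'}$ are the same oriented knot $\kappa$ and $sl(B)=sl(B')$, so $s(\kappa,\mathbb{F})-1-sl(B)=s(\kappa,\mathbb{F})-1-sl(B')$. By the biconditional just proved, $[\psi(B)]$ is trivial over $\mathbb{F}$ if and only if $[\psi(B')]$ is; hence the vanishing of $[\psi]$ takes the same value on both members of the pair and cannot distinguish them. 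As every prime knot with $<12$ crossings and its mirror is covered, the vanishing of $[\psi]$ (over $\mathbb{F}$, $char(\mathbb{F})\neq 2$) is non-effective for all such $\kappa$, completing the proof.
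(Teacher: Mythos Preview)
Your proposal is correct and follows the same route as the paper: the corollary is deduced formally from Corollary~\ref{corollary:allknotswithlessthan12arecsimple} combined with Corollary~\ref{cor:psine0}, and the real content is the finite verification that each prime knot with fewer than $12$ crossings (and its mirror) satisfies one of conditions (1)--(3) of Proposition~\ref{corollary:csimpleknotsKh}. One point where the paper streamlines what you flagged as ``the main obstacle'': for the handful of knots requiring condition~(3), the paper invokes a general structural result (\cite[Corollary~2.33]{Thesis1}) that over any field with $char(\mathbb{F})\neq 2$ the torsion submodule of $H_{BN}^{\bullet,\bullet}(\kappa,\mathbb{F}[U])$ \emph{automatically} has only even $U$-exponents, so no direct computation of the Bar-Natan module structure is needed --- one only has to read off the remaining Khovanov-homology vanishing from the tables.
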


\subsection*{Acknowledgments}
The author wishes to thank Prof. Paolo Lisca the helpful conversations and his continuous support, Alberto Cavallo for his careful reading of a draft of this paper, and Andr\'e Belotto for his advice. Many thanks go to Daniele Angella for his patience and support. The author is grateful to the anonymous referees for their helpful suggestions. The content of the present paper is a part of the author's PhD thesis, during the writing of which he was supported by a PhD scholarship ``Firenze-Perugia-Indam''.

\section{Frobenius Algebras}
This section contains some basic material concerning Frobenius algebras, and is divided into two subsections. The first subsection concerns the definition of graded and filtered Frobenius algebras. The second subsection is devoted to the description of some examples which play a central role in this paper.
\subsection{Definitions}

A \emph{Frobenius algebra} $\mathcal{F}$, over the ring $R_{\mathcal{F}}$, is a commutative unitary $R_{\mathcal{F}}$-algebra $A_{\mathcal{F}}$, together with two maps
\[ \Delta_{\mathcal{F}}: A_{\mathcal{F}} \to A_{\mathcal{F}}\otimes_{R_{\mathcal{F}}} A_{\mathcal{F}},\quad \epsilon_{\mathcal{F}}: A_{\mathcal{F}} \to R_{\mathcal{F}}, \]
satisfying the following requirements
\begin{enumerate}[(a)]\label{def:FA}
\item $A_\mathcal{F}$ is a finitely generated and projective $R_\mathcal{F}$-module;
\item \label{defitem:algandco-alg}  $\Delta_{\mathcal{F}}$ is an $A_{\mathcal{F}}$-bi-module isomorphism (i.e.\: commutes with the left and right action of $A_{\mathcal{F}}$ over $A_{\mathcal{F}}\otimes A_{\mathcal{F}}$);
\item $\epsilon_{\mathcal{F}}$ is $R_{\mathcal{F}}$-linear;
\item $\Delta_{\mathcal{F}}$ is co-associative, that is $(id\otimes \Delta_{\mathcal{F}} ) \circ \Delta_{\mathcal{F}} = (\Delta_{\mathcal{F}} \otimes id ) \circ \Delta_{\mathcal{F}}$;
\item $\Delta_{\mathcal{F}}$ is co-commutative, that is $\tau \circ \Delta_{\mathcal{F}} = \Delta_{\mathcal{F}}$, where $\tau (a\otimes b) = b \otimes a$;
\item $(id_{A_{\mathcal{F}}} \otimes \epsilon_{\mathcal{F}})\circ \Delta_{\mathcal{F}} = id_{A_{\mathcal{F}}} = ( \epsilon_{\mathcal{F}}\otimes id_{A_{\mathcal{F}}})\circ \Delta_{\mathcal{F}} $.
\end{enumerate}
The map $\Delta_{\mathcal{F}}$ is called \emph{co-multiplication}, while $\epsilon_{\mathcal{F}}$ is the \emph{co-unit} relative to $\Delta_{\mathcal{F}}$.

Whenever we deal with more than one Frobenius algebra at once, we will usually keep the subscript indicating to which Frobenius algebra the maps $\Delta$, $\epsilon$, the algebra $A$, and the ring $R$ belong to. Sometimes, it will be necessary to specify the multiplicative structure on $A_{\mathcal{F}}$, so we will denote by $m_\mathcal{F}$ the ($R_\mathcal{F}$-linear) multiplication map from $A_\mathcal{F} \otimes_{R_\mathcal{F}} A_{\mathcal{F}}$ to $A_{\mathcal{F}}$. Finally, we will denote by $\iota_{\mathcal{F}}$ the \emph{unit map}, that is the $R_{\mathcal{F}}$-linear map from $R_{\mathcal{F}}$ to $A_{\mathcal{F}}$ sending $1_{R_{\mathcal{F}}}$ to $1_{A_{\mathcal{F}}}$.

We require the graded and filtered versions of Frobenius algebras.
\begin{definition}\label{def:gradedFrobeniusalg}
A \emph{graded Frobenius algebra} is a Frobenius algebra $\mathcal{F}$, satisfying the following properties
\begin{enumerate}[(a)]
\item $R_{\mathcal{F}} = \bigoplus_{k} R_{k}$ is a graded ring;
\item $A_{\mathcal{F}}= \bigoplus_{i} A_{i}$ is a graded $R$-module;
\item $\epsilon_{\mathcal{F}},\ \iota_{\mathcal{F}}$ are graded maps;
\item $\Delta_{\mathcal{F}},\ m_{\mathcal{F}}$ are graded maps (where $A_{\mathcal{F}}\otimes A_{\mathcal{F}}$ is given the usual tensor grading);
\end{enumerate}
\end{definition}

\begin{definition}\label{def:filteredFrobeniusalg}
A \emph{filtered Frobenius algebra} is a Frobenius algebra $\mathcal{F}$ over a (possibly trivially) graded ring $R$ together with a filtration $\mathscr{F}_{\circ}$ of $A = A_{\mathcal{F}}$ as an $R$-module, for which there exists an integer $d$ such that: 
\[ \mathscr{F}_{i}\mathscr{F}_{j} \subseteq \mathscr{F}_{j+i+d}\]
for each $i$ and each $j$, and
\[ \Delta_{\mathcal{F}}(\mathscr{F}_{n}) \subseteq \sum_{k} \mathscr{F}_{k} \otimes \mathscr{F}_{n-d-k} \subseteq A \otimes A, \]
for each $n$.
\end{definition}

\begin{definition}\label{def:morphismFA}
Let $\mathcal{F} = (R,A,m,\iota,\Delta,\epsilon)$, and $\mathcal{G} = (R^\prime,B,n,\jmath,\Gamma,\eta)$ be two (graded) Frobenius Algebras. A \emph{Frobenius algebra morphism} is a pair of ring homomorphisms
\[ f: R \to R^\prime,\quad F:A \to B,\]
such that
\[ F \circ \iota   =  \jmath \circ f,\quad \eta \circ F = f \circ \epsilon. \]
and
\[ F\otimes F \circ \Delta = \Gamma \circ F.\]
Given two Frobenius algebra morphisms, say $(f,F)$ and $(g,G)$, their \emph{composition} is defined as $(f \circ g, F \circ G )$. An \emph{isomorphism of Frobenius algebras} is a morphism $(f,F)$ such that both $f$ and $F$ are ring isomorphisms. 
\end{definition}

\subsection{Examples}\label{Sec:exFA}

A large family of Frobenius algebras, which plays a central role in this paper, is defined as follows
\[ A_\mathcal{F} = \frac{R_\mathcal{F}[X]}{(X^{2} + P X + Q)}\quad R_\mathcal{F} = \frac{\mathbb{F}[U,V]}{(p(U,V),q(U,V))}\]
with $p$ and $q$ such that $(p,q)$ is a (possibly trivial) prime ideal in $\mathbb{F}[U,V]$.

Up to twist equivalence\footnote{Given a Frobenius algebra $\mathcal{F}$  we can \emph{twist} its co-multiplication and its co-unit by pre-composing them with the multiplication by an invertible element in $A_{\mathcal{F}}$. Two Frobenius algebras are twist equivalent if the can be obtained one from the other via twist. It has been proven by Khovanov \cite[Proposition 3 and Corollary 1]{Khovanov06} that two twist equivalent Frobenius algebras of rank $2$ produce isomorphic Khovanov-type homologies.} (\cite[Theorem 1.6]{Kadison99}, see also \cite{Khovanov06}) we may assume
\[\epsilon_\mathcal{F}(X) = 1_{R_\mathcal{F}},\quad \epsilon_\mathcal{F}(1_{A_\mathcal{F}}) = 0.\]
For some technical reasons which will be made clear in the next section, it is necessary to have zero divisors in $A_\mathcal{F}$. This implies that $X^{2} + P X + Q$ factors over $R_\mathcal{F}$. Thus, we may write
\[ (X^{2} + P X + Q) = (X- x_1) (X - x_2), \]
where $x_1 ,\: x_2 \in R_{\mathcal{F}}$.
\begin{remark}
Notice that the case $x_1 = x_2$ is not excluded.
\end{remark}
Set
\[ x_\circ = (X- x_1) \quad \text{and} \quad x_\bullet = (X- x_2).\]
In the rest of the paper we will be using the properties of $x_\circ$ and $x_\bullet$. In particular, we need to understand the behaviour of $x_\circ$ and $x_{\bullet}$ with respect to the operations $\Delta_{\mathcal{F}}$, $m_{\mathcal{F}}$ and $\epsilon_{\mathcal{F}}$. First, notice that
\begin{equation}
m_\mathcal{F}(x_\circ,x_\circ) = -(x_1 - x_2)x_\circ,\quad  m_\mathcal{F}(x_\bullet,x_\bullet) = (x_1 - x_2)x_\bullet,
\label{eq:productandcirclebullet1}
\end{equation}
\begin{equation}
 m_\mathcal{F}(x_\circ,x_\bullet) = 0,
\label{eq:productandcirclebullet2}
\end{equation}
and 
\begin{equation}
\quad \epsilon_\mathcal{F}(x_\circ) = \epsilon_\mathcal{F}(x_\bullet)=1_{R_\mathcal{F}}.
\label{eq:counitandcirclebullet}
\end{equation}
Furthermore, by setting
\[ e_{x_*} = \begin{cases} \phantom{-}1 & * = \circ \\ -1 & * = \bullet \end{cases},\]
and
\[ \overline{x_{\circ}} = x_\bullet,\qquad \overline{x_\bullet} = x_\circ,\]
we obtain
\begin{equation}
\overline{x} = x - e_x (x_2 - x_1)1_{A_\mathcal{F}},\quad x\in \{ x_\circ,\: x_\bullet\}.
\label{eq:differencebetweentheconjugatesgeneral}
\end{equation}
The involution $\overline{x}$ in the set $\{ x_\circ , x_\bullet\}$ will be called \emph{conjugation}.

Given a Frobenius algebra $(A,m,\iota,\Delta,\epsilon)$ with $x\in A$, write
\begin{equation}
 \Delta(x) = \sum_{i} x^\prime_i \otimes x^{\prime\prime}_i
\label{eq:relationmultcomultdeltaF1}
\end{equation}
where the elementary tensors $x^\prime_i \otimes  x^{\prime\prime}_i$ are totally determined by the equations:
\begin{equation}
 m(x,y) = \sum_{i} (x^{\prime\prime}_i,y) x^\prime_i,\quad \forall y \in A, 
\label{eq:relationmultcomultdeltaF2}
\end{equation}
and $(\cdot, \cdot)$ indicates the (non-degenerate) bi-linear pairing $\epsilon(m(\cdot,\cdot))$ (\cite[Propositions 4.3 and 4.8]{Kadison99}, see also \cite[Chapter 2]{Turner06}).
It follows immediately from Equations \eqref{eq:productandcirclebullet1}, \eqref{eq:productandcirclebullet2}, \eqref{eq:relationmultcomultdeltaF1} and \eqref{eq:relationmultcomultdeltaF2} that
\begin{equation}
 \Delta_\mathcal{F} (x_\bullet ) = x_\bullet \otimes x_\bullet,\quad \Delta_\mathcal{F}(x_\circ) = x_\circ \otimes x_\circ.
  \label{eq:comultandcirclebullet}
\end{equation}

Finally, we need to check the \emph{de-cupped torus map}, that is the $ R_{\mathcal{F}}$-linear map
\[t_\mathcal{F}:\: R_{\mathcal{F}} \longrightarrow A_{\mathcal{F}}:\: 1_{R_{\mathcal{F}}} \longmapsto  m_\mathcal{F}(\Delta_\mathcal{F}(1_{A_\mathcal{F}})) \]
Simple computations show that
\[ \Delta_\mathcal{F}(1_{A_{\mathcal{F}}}) = x_{\circ} \otimes 1_{A_\mathcal{F}} + 1_{A_\mathcal{F}} \otimes x_{\bullet} =  x_{\bullet} \otimes 1_{A_\mathcal{F}} + 1_{A_\mathcal{F}} \otimes x_{\circ},\]
from which it follows
\begin{equation}
t_{\mathcal{F}}(1_{R_{\mathcal{F}}}) = x_\bullet + x_\circ.  
\label{eq:torusmap}
\end{equation}
To conclude this section, we shall list some special elements of the family $\mathcal{F}$. Let $R$ be a ring. Define $A_{BIG}$ to be the (graded) $R[U,V]$-algebra
\[ A_{BIG} = \frac{R[U,V][X]}{(X^{2} - UX + V)},\]
where $x_{-} := X$, and $x_{+} := 1$, have degrees, respectively, $-1$ and $+1$. 
In order to define the structure of Frobenius algebra, define a co-multiplication $\Delta = \Delta_{BIG}$, as follows
\[ \Delta(x_{+}) = x_{+}\otimes_{R} x_{-} + x_{-}\otimes_{R} x_{+} - U x_{+}\otimes x_{+}, \]
\[ \Delta(x_{-}) = x_{-} \otimes_{R} x_{-} - V x_{+} \otimes x_{+}. \]
Finally, the co-unit map is defined by
\[ \epsilon :A_{BIG} \to R[U,V]: a(U,V) x_{+} + b (U,V) x_{-} \mapsto b(U,V). \]

Many familiar examples are obtained by specifying $U$, $V$ or both, in elements $u$ or $v$ of $R$ (that is, applying the functor $\cdot \otimes_{R[U,V]} R[U,V]/(U-u, V-v)$). In particular
\begin{enumerate}
\item Khovanov theory $Kh$, is obtained by setting $ U = 0$, $V=0$;
\item the original Lee theory, denoted by $OLee$, is obtained by setting $U=0$ and $V= 1$;
\item the twisted Lee theory (also known as filtered Bar-Natan theory), denoted by $TLee$, is obtained by setting $U = 1$ and $V=0$;
\item the Bar-Natan theory, denoted by $BN$, is obtained by setting $V= 0$;
\item the $V$-theory, denoted by $VT$, is obtained by setting $U = 0$.
\end{enumerate}
If one defines
\[ deg(U)= -2,\quad \text{and}\quad deg(V) = -4,\]
$BIG$ becomes a graded Frobenius algebra and hence $BN$, $VT$, and $Kh$, inherit this structure, while $TLee$ and $OLee$ become filtered Frobenius algebras.

\section{Khovanov-type homologies} \label{Sec:Khovanovhom}

The aim of this section is to recall the definition of Khovanov-type homology. We shall review first how to associate to each Frobenius algebra $\mathcal{F}$ (over the ring $R$) and to each link diagram $D$, a chain complex $C^\bullet_{\mathcal{F}}(D,R)$. The homotopy type of this chain complex depends only on the link and not on the diagram $D$ representing it. The second part of this section shall be concerned on reviewing how to endow the complex $C_{\mathcal{F}}^\bullet(D,R)$ with either a second grading or a filtration, depending on whether $\mathcal{F}$ is a graded or filtered Frobenius algebra. Finally, the end of the section shall be dedicated to fix some notation used in the rest of the paper.

\subsection{Definition of Khovanov-type homologies}\label{Subs:KhTypeHomologies}

Let $D$ be an oriented link diagram. A \emph{local resolution} of a crossing \ocrosstext is its replacement with either a $0$-resolution \isplittext or with a $1$-resolution \osplittext .

\begin{definition}
A \emph{resolution of} $D$ is the set of circles, embedded in $\mathbb{R}^{2}$, obtained from $D$ by performing a local resolution at each crossing. The total number of $1$-resolutions performed in order to obtain a resolution $\underline{s}$ is denoted by $\vert \underline{s}\vert$.
\end{definition}

Let $\mathcal{R}_D$ be the set of all the possible resolutions of $D$. Define an elementary relation on $\mathcal{R}_{D}$ as follows
\[ \underline{r} \prec \underline{s} \iff \vert \underline{r} \vert < \vert \underline{s} \vert\ \text{and } \underline{r},\ \underline{s}\ \text{differ by a single local resolution.} \]
A \emph{square} $[s_{0},s_{1},s_{2},s_{3}]$ is a collection of four (distinct) resolutions such that: $\underline{s}_{0} \prec \underline{s}_{1}$, $  \underline{s}_{0} \prec \underline{s}_{2} $, $\underline{s}_{1} \prec \underline{s}_{3}$, and $\underline{s}_{2} \prec \underline{s}_{3}$.
\begin{definition}
A \emph{sign function} is a map 
\[ \mathsf{sgn}: \mathcal{R}_{D} \times \mathcal{R}_{D} \to \{ 0,1,-1\},\]
satisfying the following two properties:
\begin{enumerate}
\item $\mathsf{sgn}(\underline{r},\underline{s}) = 0$ if, and only if, $\underline{r} \nprec \underline{s} $;
\item for each square $[s_{0},s_{1},s_{2},s_{3}]$, we have
\[ \mathsf{sgn}(\underline{s}_{0},\underline{s}_{1}) \mathsf{sgn}(\underline{s}_{1},\underline{s}_{3}) = - \mathsf{sgn}(\underline{s}_{0},\underline{s}_{2}) \mathsf{sgn}(\underline{s}_{2},\underline{s}_{3}).\]
\end{enumerate}
\end{definition}
Given a Frobenius algebra $\mathcal{F} = (R,A,m,\iota, \Delta,\epsilon)$ define
\[ C_{\mathcal{F}}^{i}(D,R) = \bigoplus_{\vert \underline{r} \vert - n_{-}(D) = i} A_{\underline{r}},\quad A_{\underline{r}} = \bigotimes_{\gamma\in \underline{r}} A_{\gamma}, \]
where $A_{\gamma}$ is just a copy of $A$ indexed by a circle $\gamma \in \underline{r}$, $n_{-}(D)$ (resp. $n_{+}(D)$) denotes the number of negative (resp. positive) crossings in $D$, and $\underline{r}$ runs through $\mathcal{R}_{D}$. These are the $R$-modules which play the role of (co)chain groups. In order to define a (co)chain complex, all that is left to do is to define a differential. This is done in two steps. Start by defining
\[ d_{\underline{r}}^{\underline{s}}: A_{\underline{r}} \to A_{\underline{s}},\quad \underline{r}\prec \underline{s}.\]
 Given $\underline{s}$ such that $\underline{r}\prec \underline{s}$, then $\underline{r}$ and $\underline{s}$ differ by a single local resolution. Hence there is an identification of all the circles in the two resolutions, except the ones involved in the change of local resolution. There are only two cases to consider: (a) two circles of $\underline{r}$, say $\gamma_{1},\ \gamma_{2}$ are merged in a single circle $\gamma^\prime_{1}$ in $\underline{s}$, or (b) a circle $\gamma_{1}$ belonging to $\underline{r}$ is split in into two circles, say $\gamma_{1}^\prime$ and $\gamma_{2}^\prime$, in $\underline{s}$. Consider the elementary tensor $x = \bigotimes_{\gamma\in \underline{r}} \alpha_{\gamma} \in A_{\underline{r}}$, then $d_{\underline{r}}^{\underline{s}}$ is defined as follows
\[ d_{\underline{r}}^{\underline{s}} (x) = \begin{cases}\bigotimes_{\gamma\in \underline{r}\cap \underline{s}} \alpha_{\gamma} \otimes m(\alpha_{\gamma_{1}},\alpha_{\gamma_{2}}) & \text{in case (a)}\\
\bigotimes_{\gamma\in \underline{r}\cap \underline{s}} \alpha_{\gamma} \otimes \Delta(\alpha_{\gamma_{1}}) & \text{in case (b)} \\\end{cases}\]
and extended by $R$-linearity.
Finally, fix a sign function $\mathsf{sgn}$ and define
\[ d_{\mathcal{F}}^{i}: C^{i}_{\mathcal{F}}(D,R) \to C^{i+1}_{\mathcal{F}}(D,R): x\in A_{\underline{r}} \mapsto \sum_{\underline{r}\prec\underline{s}} \mathsf{sgn}(\underline{r},\underline{s})d_{\underline{r}}^{\underline{s}}(x).\]

\begin{remark}
Notice that $d_{\underline{r}}^{\underline{s}}$ is well-defined because of the commutativity of $m$, and of the co-commutativity of $\Delta$. On the other hand, $d_{\mathcal{F}}$ depends on the choice sign function $\mathsf{sgn}$. In particular, the existence of $d_{\mathcal{F}}$ depends on the existence of such a function.
\end{remark}

\begin{proposition}[Khovanov, \cite{Khovanov00}]
There exists a sign function $\mathsf{sgn}$ such that the complex $(C_{\mathcal{F}}^{\bullet}(D,R),d^{\bullet}_{\mathcal{F}})$ is a (co)chain complex. Furthermore, $(C_{\mathcal{F}}^{\bullet}(D,R),d^{\bullet}_{\mathcal{F}})$ does not depend, up to isomophism, on the choice of the sign function $\mathsf{sgn}$, or on the order of the circles in each resolution. Finally, given a sequence of Reidemeister moves between two diagrams $D$ and $D^\prime$, there exists a chain homotopy equivalence
\[\Phi: C_{\mathcal{F}}^{\bullet}(D,R) \longrightarrow C_{\mathcal{F}}^{\bullet}(D^\prime,R)\]
induced by this sequence.
\end{proposition}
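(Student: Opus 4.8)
The statement to prove is the proposition attributed to Khovanov: existence of a sign function making $(C_{\mathcal{F}}^\bullet, d_{\mathcal{F}}^\bullet)$ a chain complex, independence up to isomorphism from the sign function and ordering of circles, and functoriality (chain homotopy equivalences from Reidemeister moves).

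\textbf{Proof proposal.}

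The plan is to verify the three assertions in order, each reducing to a local/combinatorial check on the cube of resolutions. First, for the existence of a sign function and the complex property $d^2 = 0$: I would index resolutions of $D$ by vertices of the cube $\{0,1\}^n$, where $n$ is the number of crossings, so that $\underline r \prec \underline s$ corresponds to flipping a single coordinate from $0$ to $1$. The standard choice $\mathsf{sgn}(\underline r, \underline s) = (-1)^{\#\{k < i \,:\, r_k = 1\}}$, where $i$ is the flipped coordinate, manifestly satisfies property (1) of a sign function, and property (2) — anticommutativity on each square — is the classical sign assignment lemma for cube complexes. Then $d^2 = 0$ follows from two facts: (i) every square of resolutions, viewed as a diagram of the four maps $d_{\underline r}^{\underline s}$, \emph{commutes} before signs are inserted, and (ii) the sign function makes it anticommute. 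Claim (i) is the genuine content and is checked by enumerating the types of squares: both local changes disjoint (trivial, the two maps act on different tensor factors and commute), or the two local changes share circles, giving the relations $\Delta \circ m$-type identities. The three essential local identities are: $m$ is associative and commutative; $\Delta$ is coassociative and cocommutative; and the Frobenius/bialgebra-type compatibility $(\mathrm{id}\otimes m)\circ(\Delta\otimes\mathrm{id}) = \Delta\circ m = (m\otimes\mathrm{id})\circ(\mathrm{id}\otimes\Delta)$. These hold in any commutative Frobenius algebra, in particular for all $\mathcal{F}$ in Subsection~\ref{Sec:exFA}. Summing over squares containing a fixed vertex gives $d^2 = 0$.

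Second, for independence from the choice of sign function: given two sign functions $\mathsf{sgn}$ and $\mathsf{sgn}'$, I would construct a diagonal isomorphism of complexes. On each $A_{\underline r}$ put a scalar $\epsilon(\underline r) \in \{\pm 1\}$; one checks inductively over the cube (starting from the all-$0$ resolution and using that both sign functions satisfy the square condition) that the $\epsilon(\underline r)$ can be chosen so that $\epsilon(\underline s)\,\mathsf{sgn}(\underline r,\underline s) = \mathsf{sgn}'(\underline r,\underline s)\,\epsilon(\underline r)$ for every edge; then the diagonal map $\bigoplus \epsilon(\underline r)\cdot\mathrm{id}_{A_{\underline r}}$ is a chain isomorphism. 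Independence from the ordering of circles within a resolution is even easier: reordering just applies the symmetry $\tau$ of the tensor factors, and since $A_{\mathcal{F}}$ is commutative and cocommutative, $m$ and $\Delta$ are unchanged, so the permutation of tensor factors is a chain isomorphism.

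Third, for the chain homotopy equivalences from Reidemeister moves: by composition it suffices to produce one for each of the three oriented Reidemeister moves (and its inverse). Here I would follow the now-standard approach (Khovanov, Bar-Natan): a Reidemeister move is local, so the cube of the diagram $D$ containing the move splits as a mapping cone or a direct sum of sub-cubes according to the resolutions of the crossings involved in the move; one identifies an acyclic subcomplex (or a deformation retract) using Gaussian elimination on the relevant local tensor factors, and the retraction/inclusion, tensored with the identity on the rest of the cube, gives the homotopy equivalence. The needed local computations are: for RI, the extra circle created carries the element killed by $\epsilon$ (delooping via $\Delta$ and $m$); for RII, a mapping cone whose connecting map is an isomorphism on a summand; for RIII, compatibility of the two cones obtained from RII applied in two ways. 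All of these only use the Frobenius structure axioms and, crucially, the existence of a nontrivial idempotent-type splitting — which is exactly why zero divisors were required in $A_{\mathcal{F}}$ (equations~\eqref{eq:productandcirclebullet1}--\eqref{eq:comultandcirclebullet}); the elements $x_\circ, x_\bullet$ furnish the delooping isomorphism $A \otimes A \cong A \oplus A$.

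\textbf{Main obstacle.} The genuinely delicate point is verifying $d^2 = 0$ in full generality, i.e. checking that \emph{every} type of square in $\mathcal{R}_D$ commutes before signs — the bookkeeping of which circles get merged or split in the two intermediate resolutions, and matching each case to the correct Frobenius-algebra identity, is where care is needed (one must treat, e.g., the case where a single circle is split in one direction and merged with another in the other direction). The functoriality statement, by contrast, is a black box once the delooping isomorphism supplied by $x_\circ, x_\bullet$ is in hand, so I would cite Khovanov (\cite{Khovanov00}) and Bar-Natan (\cite{BarNatan05cob}) for the detailed verification rather than reproduce it, since the proposition is stated as a recollection of known results.
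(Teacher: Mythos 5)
The paper offers no proof of this proposition: it is stated as a recollection of Khovanov's results and simply cited to \cite{Khovanov00} (and implicitly \cite{BarNatan05cob}), so your sketch cannot be compared against an in-paper argument. Your outline is the standard one and is essentially correct on all three points (the cube sign assignment and the square-by-square check of $d^2=0$ via the Frobenius compatibility $\Delta\circ m = (\mathrm{id}\otimes m)\circ(\Delta\otimes\mathrm{id})$, the diagonal change-of-sign isomorphism, and Gaussian elimination/delooping for the Reidemeister moves). One small correction: the delooping isomorphism and Reidemeister invariance do \emph{not} require zero divisors in $A_{\mathcal{F}}$ --- they hold for any Frobenius algebra with $A_{\mathcal{F}}$ free of rank $2$, as the paper notes in Subsection~\ref{Subs:GradingC_F}; the factorisation $X^2+PX+Q=(X-x_1)(X-x_2)$ is imposed later only so that the $\beta_{\mathcal{F}}$-cycles are well defined (it is Equation~\eqref{eq:productandcirclebullet2} that makes them cycles), so attributing the zero-divisor hypothesis to the delooping step misstates its role, though it does not affect the validity of your argument.
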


\begin{remark}
The map associated to the sequence of Reidemeister moves is not uniquely defined (see \cite{Jacobsson04}). In Section \ref{sec:invariancebeta} we specify the maps associated to each Reidemeister move used in this paper. These maps are the maps commonly used through the literature (for example, see \cite{BarNatan05cob}).
\end{remark}

\begin{remark}\label{rem:disjointunionandtensor}
It is immediate from the definition of Khovanov-type homology that
\[ C^{\bullet}_{\mathcal{F}}(D\sqcup D^\prime, R) \simeq  C^{\bullet}_{\mathcal{F}}(D, R)\otimes_{R}  C^{\bullet}_{\mathcal{F}}(D^\prime, R), \]
as complexes of $R$-modules. Moreover, if $\mathcal{F}$ is a graded (resp. filtered) Frobenius algebra the above isomorphism respects the quantum grading (resp. the filtration) defined in the next subsection.
\end{remark}
\subsection{Gradings and some notation}\label{Subs:GradingC_F}

There is a unique condition on $\mathcal{F}$ for the homology of the complex $(C_{\mathcal{F}}^{\bullet}(D,R),d^{\bullet}_{\mathcal{F}})$ to yeld a link invariant: the rank of $A_{\mathcal{F}}$ being $2$ (cf. \cite[Proposition 3, Theorems 5 and 6]{Khovanov06}). Moreover, this condition is also sufficient to get functoriality (up to sign and up to boundary fixing isotopy \cite{BarNatan05cob, Jacobsson04}).

As we are concerned only with link invariant theories, from now on all Frobenius algebras are supposed to be free of rank $2$. Once a basis of $A$ is fixed, say $x_{+},\ x_{-}$, the elements of $C^{i}_{\mathcal{F}}(D,R)$ of the form $\bigotimes_{\gamma\in\underline{r}} \alpha_{\gamma}$, with $\alpha_{\gamma} \in \{x_{+},\ x_{-}\}$ and $\underline{r}\in \mathcal{R}_D$, will be called \emph{states}; while those of the form $\bigotimes_{\gamma\in\underline{r}} \alpha_{\gamma}$, with $\alpha_\gamma \in A$, will be called \emph{enhanced states}. Notice that the states are an $R$-basis of $C^{\bullet}_{\mathcal{F}}(D,R)$, while the enhanced states are a system of generators. 

\begin{remark}\label{rem:qdeg}
If $\mathcal{F}$ is a graded (resp. filtered) Frobenius algebra, then the basis $\{ x_{+},x_{-}\}$ will be taken to be composed of homogeneous elements (resp. to be a filtered basis).
Under these conditions, it is possible to define another grading (resp. filtration) over the complex $(C^{\bullet}_{\mathcal{F}}(D,R),d^{\bullet}_{\mathcal{F}})$, as follows
\[ qdeg(\bigotimes_{\gamma\in \underline{r}} \alpha_{\gamma}) = \sum_{\gamma\in\underline{r}} deg_{A}(\alpha_{\gamma}) - 2 n_{-}(D) + n_{+}(D)+ \vert \underline{r} \vert,\]
for each state $\bigotimes_{\gamma\in\underline{r}} \alpha_{\gamma}$. (Then the filtration is given by considering all the elements which can be written as combination of states of degree greater or lower than  a fixed $qdeg$, depending on whether the multiplication is non-decreasing or non-increasing with respect to the $qdeg$.) Moreover, by definition of graded (resp. filtered) Frobenius algebra, the differential $d_{\mathcal{F}}^\bullet$ is homogeneous (resp. filtered) with respect to the $qdeg$ degree (resp. induced filtration), and the resulting homology theory is hence doubly-graded (resp. filtered). Let $\mathcal{F}$ be a filtered Frobenius algebra, the filtration induced on the complex $C_\mathcal{F}^\bullet(D,R)$ is denoted by $\mathscr{F}_{\circ}C_\mathcal{F}^\bullet(D,R)$.
\end{remark}

\begin{theorem}
Let $D$ be an oriented link diagram. If $\mathcal{F}$ and $\mathcal{G}$ are isomorphic (graded, resp. filtered) Frobenius algebras, then $(C^\bullet_{\mathcal{F}}(D,R_\mathcal{F}),d^\bullet_{\mathcal{F}})$ and $(C^\bullet_{\mathcal{G}}(D,R_\mathcal{G}),d^\bullet_{\mathcal{G}})$ are isomorphic as (doubly-graded, resp. filtered) complexes of both $R_\mathcal{F}$ and $R_\mathcal{G}$ modules.
\end{theorem}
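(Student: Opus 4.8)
The plan is to build the required isomorphism circle by circle out of the given isomorphism of Frobenius algebras, and then verify that it intertwines all the structure maps; the complexes will turn out to be isomorphic on the nose, not merely quasi-isomorphic. Write $(f,F)$ for the isomorphism, with $f\colon R_{\mathcal{F}}\to R_{\mathcal{G}}$ and $F\colon A_{\mathcal{F}}\to A_{\mathcal{G}}$ ring isomorphisms satisfying $F\circ\iota_{\mathcal{F}}=\iota_{\mathcal{G}}\circ f$, $\epsilon_{\mathcal{G}}\circ F=f\circ\epsilon_{\mathcal{F}}$, and $(F\otimes F)\circ\Delta_{\mathcal{F}}=\Delta_{\mathcal{G}}\circ F$, with both $f$ and $F$ graded (resp. filtered) in the graded (resp. filtered) case, and likewise for $(f^{-1},F^{-1})$. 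Since $F$ is a ring homomorphism it is also multiplicative and unital, i.e. $F\circ m_{\mathcal{F}}=m_{\mathcal{G}}\circ(F\otimes F)$ and $F(1_{A_{\mathcal{F}}})=1_{A_{\mathcal{G}}}$; and combining multiplicativity with $F\circ\iota_{\mathcal{F}}=\iota_{\mathcal{G}}\circ f$ gives the $f$-semilinearity relation $F(ra)=f(r)F(a)$ for $r\in R_{\mathcal{F}}$, $a\in A_{\mathcal{F}}$.

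Next I would fix one sign function $\mathsf{sgn}$ on $\mathcal{R}_D$ and use it to define both $d_{\mathcal{F}}$ and $d_{\mathcal{G}}$; this is legitimate by Khovanov's proposition, and it is the step that makes the two complexes directly comparable, since $\mathsf{sgn}$ depends only on $D$. For each resolution $\underline{r}$, the semilinear maps $F$ on the individual circle factors assemble — via the universal property of the $R_{\mathcal{F}}$-tensor product $A_{\underline{r},\mathcal{F}}=\bigotimes_{\gamma\in\underline{r},R_{\mathcal{F}}}A_{\gamma,\mathcal{F}}$, applied to the composite with the $R_{\mathcal{G}}$-balanced multilinear map into $\bigotimes_{\gamma\in\underline{r},R_{\mathcal{G}}}A_{\gamma,\mathcal{G}}$ — to a well-defined $f$-semilinear bijection $F_{\underline{r}}\colon A_{\underline{r},\mathcal{F}}\to A_{\underline{r},\mathcal{G}}$, sending a state $\bigotimes_{\gamma}\alpha_{\gamma}$ to $\bigotimes_{\gamma}F(\alpha_{\gamma})$ and with inverse assembled from $F^{-1}$. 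Because the cohomological degree of the summand $A_{\underline{r}}$, namely $\lvert\underline{r}\rvert-n_{-}(D)$, involves only $D$, setting $\Phi^{i}=\bigoplus_{\lvert\underline{r}\rvert-n_{-}(D)=i}F_{\underline{r}}$ yields a degree-preserving $R$-semilinear bijection $\Phi\colon C^{\bullet}_{\mathcal{F}}(D,R_{\mathcal{F}})\to C^{\bullet}_{\mathcal{G}}(D,R_{\mathcal{G}})$.

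Then I would check $\Phi\circ d_{\mathcal{F}}=d_{\mathcal{G}}\circ\Phi$. It suffices to compare the per-edge maps $d^{\underline{s}}_{\underline{r}}$ for fixed $\underline{r}\prec\underline{s}$, since the same coefficient $\mathsf{sgn}(\underline{r},\underline{s})$ appears in $d_{\mathcal{F}}$ and $d_{\mathcal{G}}$ and the combinatorial identification of the circles untouched by the local change is the same on both sides. In the merging case, both $F_{\underline{s}}\circ d^{\underline{s}}_{\underline{r},\mathcal{F}}$ and $d^{\underline{s}}_{\underline{r},\mathcal{G}}\circ F_{\underline{r}}$ act as $F$ on the untouched circles and, on the two merging circles, as $F\circ m_{\mathcal{F}}$ resp. $m_{\mathcal{G}}\circ(F\otimes F)$, which agree by multiplicativity of $F$; in the splitting case they reduce similarly to the identity $(F\otimes F)\circ\Delta_{\mathcal{F}}=\Delta_{\mathcal{G}}\circ F$, which is one of the morphism axioms. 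Summing over $\underline{s}$ and using bijectivity of $\Phi$, this exhibits $\Phi$ as an isomorphism of chain complexes.

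Finally I would handle the gradings, filtrations, and the two-ring assertion. In the graded case $F$ preserves degrees, so $\deg_{A_{\mathcal{G}}}(F\alpha)=\deg_{A_{\mathcal{F}}}(\alpha)$ for homogeneous $\alpha$; as the only Frobenius-algebra-dependent term in the formula for $qdeg$ is $\sum_{\gamma}\deg_{A}(\alpha_{\gamma})$, $\Phi$ preserves $qdeg$ and is an isomorphism of doubly-graded complexes. In the filtered case, being an isomorphism of filtered Frobenius algebras gives $F(\mathscr{F}_{i}A_{\mathcal{F}})=\mathscr{F}_{i}A_{\mathcal{G}}$, hence $F_{\underline{r}}$ matches the tensor-product filtrations and $\Phi$ matches the induced filtrations $\mathscr{F}_{\circ}C^{\bullet}$. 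For the module statement, view $C^{\bullet}_{\mathcal{G}}$ as an $R_{\mathcal{F}}$-module via $f$: then $f$-semilinearity of $\Phi$ says exactly that $\Phi$ is $R_{\mathcal{F}}$-linear, and symmetrically (using $f^{-1}$) it is $R_{\mathcal{G}}$-linear, so it is an isomorphism in both categories of modules. The only genuinely delicate point is the bookkeeping in the construction of $F_{\underline{r}}$, where source and target tensor products are taken over the two different ground rings $R_{\mathcal{F}}$ and $R_{\mathcal{G}}$ and one must pass through $f$-semilinearity to see that $F_{\underline{r}}$ is well defined; after that, each remaining step is a direct invocation of one of the morphism axioms for $(f,F)$.
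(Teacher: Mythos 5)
Your proof is correct and follows essentially the same route as the paper: assemble the circle-wise maps $F$ into an isomorphism $\bigotimes_{\gamma}F\colon A_{\underline{r}}\to B_{\underline{r}}$ on each resolution, observe that it commutes with the differentials by the morphism axioms, and treat the target as an $R_{\mathcal{F}}$-module via $f$ (and symmetrically via $f^{-1}$). The paper states this in three sentences; your version merely spells out the details (fixed sign function, per-edge check, semilinearity) that the paper leaves implicit.
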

\begin{proof}
Let $\mathcal{F} = (R_\mathcal{F},A,m,\iota,\Delta,\epsilon)$ and $\mathcal{G} = (R_\mathcal{G},B,n,\jmath,\gamma,\eta)$ be two isomorphic (graded, resp. filtered) Frobenius algebras, and let $(f,F)$ the (graded, resp. filtered) isomorphism between them. Then, for each resolution $\underline{r}$ we have the isomorphism\footnote{This is injective because $A$, $B$ are both flat $R_\mathcal{F}$-modules, and is obviously surjective.} of (graded, resp. filtered) $R_\mathcal{F}$-modules 
\[ \bigotimes_{\gamma\in \underline{r}} F:A_{\underline{r}} \to B_{\underline{r}},\]
where $B$ is seen as an $R_\mathcal{F}$-module with the induced structure. This naturally induces an isomorphism of (bi-graded, resp. filtered) chain modules that commutes (by definition of morphism between Frobenius algebras) with differentials. The same reasoning works if $R_\mathcal{F}$ is replaced by $R_\mathcal{G}$.
\end{proof}

\begin{remark}
Until now we have required the diagrams to be oriented: this is essential for the invariance. As the reader may have noticed, the orientation comes up in the degree shift. The homological degree has been shifted by the number of negative crossings. Without this shift the homology is not invariant as graded module (much less as bi-graded or filtered module).
\end{remark}

Let $D$  be an oriented link diagram, and let $\mathbf{a}\subseteq \mathbb{R}^{2}$ be a segment joining two strands of $D$ (i.e. edges of the underlying graph), and meeting $D$ only at the endpoints. Therefore $\mathbf{a}$ does not intersect a crossing of $D$. Let $D^\prime$ be the unoriented link diagram formed by replacing a small neighbourhood of $\mathbf{a}$ as shown in Figure \ref{fig:saddle}. Given a resolution $\underline{r}$ of $D$, denote by $\gamma_1$ and $\gamma_2$ the circles in $\underline{r}$ (possibly $\gamma_1 = \gamma_2$) containing the endpoints of $\mathbf{a}$.
\begin{figure}[h]
\centering
\begin{tikzpicture}[scale=.25]{
\osplit{0}{0}
\draw[dashed] (2,0.75) -- (2,3.25);
\draw[dashed] (2,2) circle (2.8);
\node at (2.2,0) {$\gamma_{1}$};
\node at (1.8,4) {$\gamma_{2}$};
\node at (2.5,2) {$\mathbf{a}$};
\isplit{10}{0}
\draw[dashed] (12,2) circle (2.8);
\node at (2.2,-1.5) {$D$};
\node at (12.2,-1.5) {$D^\prime$};
}\end{tikzpicture}
\caption{The diagrams $D$ and $D^\prime$.}\label{fig:saddle}
\end{figure}
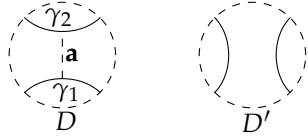 

Notice that there is not, in general, a canonical way to endow $D^\prime$ with an orientation compatible with the one of $D$. The existence of such an orientation depends on the relative orientations of the arcs containing the endpoints of \textbf{a} in $D$. We shall assume $D^\prime$ to be given such an orientation if it exists, otherwise we shall randomly orient $D^\prime$.

The \emph{saddle move along $\mathbf{a}$} is the map
\[\mathbf{S}: C_{\mathcal{F}}^{\bullet} (D,R) \longrightarrow C_{\mathcal{F}}^{\bullet - \omega(D,D^\prime)}(D^\prime,R),\]
where $\omega(D,D^\prime) = n_{-}(D) - n_{-}(D^\prime)$, and $\mathbf{S}$ is defined on enhance states as follows:
\[ \mathbf{S}(\bigotimes_{\gamma\in \underline{r}} \alpha_{\gamma}, \mathbf{a}) = \bigotimes_{\gamma\in \underline{r}\setminus \{ \gamma_1,\gamma_{2}\}} \alpha_{\gamma} \otimes \varsigma(\alpha_{\gamma_1} , \alpha_{\gamma_{2}}),\]
and 
\[\varsigma(\alpha_{\gamma_{1}},\alpha_{\gamma_{2}}) = \begin{cases} m_{\mathcal{F}}(\alpha_{\gamma_{1}},\alpha_{\gamma_{2}}) & \text{if}\: \gamma_{1}\neq\gamma_{2} \\ \Delta_{\mathcal{F}}(\alpha_{\gamma_{1}}) & \text{if}\: \gamma_{1}=\gamma_{2} \\\end{cases}\]

\begin{remark}
Notice that $\mathbf{S}$ is well defined because of the commutativity of $m_{\mathcal{F}}$ and of the co-commutativity of $\Delta_{\mathcal{F}}$. Moreover, given an enhanced state $x$, the chain $\mathbf{S}(x,\mathbf{a})$ is a sum of enhanced states and not necessarily a single enhanced state.
\end{remark}

\begin{remark}
If $\mathcal{F}$ is a graded (or a filtered) Frobenius algebra, then $\mathbf{S}$ is a graded (resp. filtered) map of (filtered) degree $-3\omega(D,D^\prime)-1$.
\end{remark}

\section{Transverse invariants in Khovanov-type homologies}\label{sec:invariancebeta}

The aim of this section is dual: to define $\beta_{\mathcal{F}}$-invariant, and prove some of their properties. This section is divided into six subsections. In the first subsection we introduce the $\beta_{\mathcal{F}}$-cycles. The second, third and fourth subsections are dedicated to the study of the behaviour of the $\beta_{\mathcal{F}}$-cycles with respect to the maps induced by the Reidemeister moves. In the fifth subsection the $\beta_{\mathcal{F}}$-cycles are used to define the $\beta_{\mathcal{F}}$-invariants. Moreover, we prove Theorem \ref{theorem:main1} and discuss in detail in which sense the $\beta_{\mathcal{F}}$-invariants are invariants for transverse links. Finally, the last subsection is dedicated to the proof of Theorem \ref{theorem:main2}, which concerns the uniqueness of the $\beta_{\mathcal{F}}$-invariants.

\subsection{The definition of the $\beta_{\mathcal{F}}$-cycles}
Let $D$ be an oriented link diagram, and denote by $\underline{r}$ the oriented resolution\footnote{The unique resolution which inherits an orientation from $D$.} of $D$. Mark a point $p_{\gamma}$ on each circle $\gamma$ in $\underline{r}$, and let $q_{\gamma}$ be the point in $\mathbb{S}^{2}$ obtained by pushing $p_{\gamma}$ slightly to the left with respect to the orientation on $\underline{r}$ induced by $D$. 
The \emph{nesting number}\label{Nestingnumber} $N (\gamma)$ is the number, counted modulo $2$, of intersection points between the circles in $\underline{r}$ and a generic segment between $q_{\gamma}$ and the point at the infinity in $\mathbb{S}^{2} = \mathbb{R}^{2} \cup \{ \infty \}$.

Define \emph{$\beta_\mathcal{F}$-cycles} as follows: $\beta_\mathcal{F}(D,R) \in C^{\bullet,\bullet}_{\mathcal{F}}(D,R)$ is the enhanced state with underlying resolution the oriented resolution, where each circle $\gamma$ has label (i.e. the factor corresponding to $A_{\gamma}$ in the tensor product)
\[ b_\gamma =  b_\gamma (\mathcal{F}) =
\begin{cases}
x_{\circ} & \text{if }N(\gamma)\equiv 0\ mod\: 2\\
x_\bullet & \text{if }N(\gamma)\equiv 1\ mod\: 2 \\
\end{cases}
\]
The chain $\overline{\beta}_{\mathcal{F}}(D,R)$ is defined exactly as $\beta_{\mathcal{F}}(D,R)$ but exchanging the roles of $x_{\circ}$ and $x_{\bullet}$. Sometimes the reference to $R$ will be omitted from both the notation for the $\beta_{\mathcal{F}}$-cycles and notation for the Khovanov-type chain complexes.

\begin{remark}
Notice that in general $\beta_{\mathcal{F}}(D,R)$ and $\overline{\beta}_{\mathcal{F}}(D,R)$ are not distinct.
\end{remark}

\begin{proposition}
Let $D$ be an oriented link diagram. The enhanced states $\beta_{\mathcal{F}}(D,R)$, $\overline{\beta}_{\mathcal{F}}(D,R)\in C_{\mathcal{F}}^{\bullet}(D,R)$ are cycles.
\end{proposition}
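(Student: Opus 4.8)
The plan is to show that $d_{\mathcal{F}}(\beta_{\mathcal{F}}(D,R)) = 0$ by examining, for each crossing of $D$, the local resolution change that takes the oriented resolution $\underline{r}$ to a resolution $\underline{s}$ with $\vert\underline{s}\vert = \vert\underline{r}\vert+1$, and checking that the corresponding term $d_{\underline{r}}^{\underline{s}}(\beta_{\mathcal{F}}(D,R))$ vanishes. Since $\beta_{\mathcal{F}}(D,R)$ is supported on the single resolution $\underline{r}$, the differential applied to it is $\sum_{\underline{r}\prec\underline{s}}\mathsf{sgn}(\underline{r},\underline{s})\,d_{\underline{r}}^{\underline{s}}(\beta_{\mathcal{F}}(D,R))$, and the resolutions $\underline{s}$ appearing here are pairwise distinct (they differ in which crossing was changed), so there is no cancellation between different $\underline{s}$: it suffices to prove each $d_{\underline{r}}^{\underline{s}}(\beta_{\mathcal{F}}(D,R)) = 0$ individually. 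First I would fix a crossing $c$ and pass from the $0$- or $1$-resolution at $c$ prescribed by the oriented resolution to the other one; this is a single merge or split.

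The key computational input is Equations \eqref{eq:productandcirclebullet1}, \eqref{eq:productandcirclebullet2} and \eqref{eq:comultandcirclebullet}, together with the behaviour of the nesting number under a local resolution change at the oriented resolution. In the \emph{merge} case (b) two circles $\gamma_1,\gamma_2$ of $\underline{r}$ become one circle; here $d_{\underline{r}}^{\underline{s}}$ applies $m_{\mathcal{F}}(b_{\gamma_1},b_{\gamma_2})$. By \eqref{eq:productandcirclebullet2}, this is $0$ as soon as $\{b_{\gamma_1},b_{\gamma_2}\} = \{x_\circ,x_\bullet\}$, i.e. as soon as $N(\gamma_1)$ and $N(\gamma_2)$ have opposite parities. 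In the \emph{split} case (a) a circle $\gamma_1$ of $\underline{r}$ becomes two circles $\gamma_1',\gamma_2'$ and $d_{\underline{r}}^{\underline{s}}$ applies $\Delta_{\mathcal{F}}(b_{\gamma_1})$, which by \eqref{eq:comultandcirclebullet} equals $x_\circ\otimes x_\circ$ or $x_\bullet\otimes x_\bullet$ — in particular a \emph{single} enhanced state whose two new labels are equal and agree with $b_{\gamma_1}$; for this to equal $0$ in $d_{\mathcal{F}}(\beta_{\mathcal{F}})$ it cannot, so instead the point is to show the \emph{split case never occurs} at the oriented resolution: changing a crossing away from its oriented-resolution value always \emph{merges} two circles rather than splitting one. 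This is the classical fact that the two strands at a crossing of the oriented (Seifert) resolution belong to distinct Seifert circles, so altering that crossing joins two circles. Hence only case (b) arises, and one is reduced to the parity statement.

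So the crux is: for each crossing $c$, the two Seifert circles $\gamma_1,\gamma_2$ meeting at $c$ satisfy $N(\gamma_1)\not\equiv N(\gamma_2)\pmod 2$. I would prove this by analysing the effect on the nesting number of crossing the arc near $c$: the generic ray from the push-off point $q_\gamma$ to $\infty$ used to define $N(\gamma)$ can be chosen to differ, between $\gamma_1$ and $\gamma_2$, only by whether it crosses the small arc of the diagram near $c$ (the two circles are ``adjacent'' across that arc, separated locally by exactly one strand). Crossing one more strand changes the parity of the count by one, giving $N(\gamma_1)\not\equiv N(\gamma_2)$. Equivalently, one can invoke the standard description of the oriented resolution as a collection of nested circles with a checkerboard-type $\mathbb{Z}/2$ colouring by nesting parity, under which circles sharing a crossing receive opposite colours; Plamenevskaya's original argument for $Kh$ uses exactly this, and the only new ingredient here is that $m_{\mathcal{F}}(x_\circ,x_\bullet)=0$ replaces the Khovanov relation $x_-x_-=0$. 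The same argument, with $x_\circ$ and $x_\bullet$ interchanged throughout, shows $\overline{\beta}_{\mathcal{F}}(D,R)$ is a cycle.

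The main obstacle I anticipate is purely the topological bookkeeping in the nesting-parity step — making precise that the ray defining $N(\gamma_1)$ and the ray defining $N(\gamma_2)$ can be chosen to agree outside a neighbourhood of $c$ and to differ there by exactly one intersection — rather than anything algebraic; once that parity claim is in hand, the vanishing of each $d_{\underline{r}}^{\underline{s}}(\beta_{\mathcal{F}}(D,R))$ is immediate from \eqref{eq:productandcirclebullet2}.
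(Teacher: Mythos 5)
Your argument is correct and is essentially the paper's own proof: both reduce to showing that every local resolution change at the oriented resolution is a merge of two circles of opposite nesting parity, so each term of the differential vanishes by $m_{\mathcal{F}}(x_\circ,x_\bullet)=0$ (Equation \eqref{eq:productandcirclebullet2}). The only difference is that the paper cites \cite[Corollary 2.5]{Rasmussen10} for the parity/merge fact where you sketch the ray argument directly (and your case labels (a)/(b) are swapped relative to the paper's, which is immaterial).
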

\begin{proof}
Since two circles in the oriented resolution share a crossing only if they have distinct nesting numbers (see \cite[Corollary 2.5]{Rasmussen10}), which also implies that each change of a local resolution in the oriented resolution merges two circles, the proposition follows directly from Equation \eqref{eq:productandcirclebullet2}.
\end{proof}

Henceforth all the results will be stated for $\beta_{\mathcal{F}}(D,R)$, with the understanding that the same results hold by replacing $\beta_{\mathcal{F}}(D,R)$ with $\overline{\beta}_{\mathcal{F}}(D,R)$.

\subsection{First Reidemeister move}
Let $D$ be an oriented link diagram. Denote by $D^\prime_+$ the oriented link diagram obtained from $D$ via a positive first Reidemeister move (i.e.\: the addition of a positive curl, see Figure \ref{reidmoves1}) on an arc \textbf{a}. Finally, denote by $c_+$ the crossing appearing only in $D_{+}^\prime$.

\begin{figure}[H]
\centering
\begin{tikzpicture}[scale = .75, thick]

\draw[->] (-1,0) .. controls +(.5,.5) and +(-.5,.5) ..  (1,0);

\node at (0,-.5) {$D$};
\node at (1,.35) {\textbf{a}};

\node at (4,-.5) {$D^\prime_+$};
\node at (-4,-.5) {$D^\prime_-$};

\node at (-2,1) {$R_1^-$}; 
\node at (-2,.5) {$\leftrightharpoons$}; 
\node at (2,1) {$R_1^+$}; 
\node at (2,.5) {$\rightleftharpoons$}; 

\draw[<-] (-3,0) .. controls +(-.5,.5) and +(.25,-.5) ..  (-4.5,1);
\pgfsetlinewidth{8*\pgflinewidth}
\draw[white] (-5,0) .. controls +(.5,.5) and +(-.25,-.5) ..  (-3.5,1);
\pgfsetlinewidth{.125*\pgflinewidth}
\draw[<-] (-4.5,1) .. controls +(-.25,.5) and +(.25,.5) ..  (-3.5,1);
\draw[->] (-5,0) .. controls +(.5,.5) and +(-.25,-.5) ..  (-3.5,1);

\begin{scope}[shift= {(8,0)}]
\draw[<-] (-4.5,1) .. controls +(-.25,.5) and +(.25,.5) ..  (-3.5,1);
\draw[->] (-5,0) .. controls +(.5,.5) and +(-.25,-.5) ..  (-3.5,1);
\pgfsetlinewidth{8*\pgflinewidth}
\draw[white] (-3,0) .. controls +(-.5,.5) and +(.25,-.5) ..  (-4.5,1);
\pgfsetlinewidth{.125*\pgflinewidth}
\draw[<-] (-3,0) .. controls +(-.5,.5) and +(.25,-.5) ..  (-4.5,1);
\end{scope}

\node at (4.75,.5) {$c_+$};
\node at (-4.75,.5) {$c_-$};

\end{tikzpicture}
\caption{The first Reidemeister move.}
\label{reidmoves1}
\end{figure}
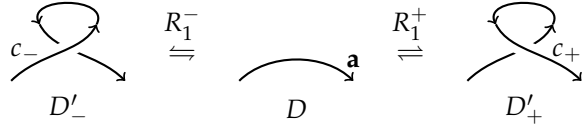 

The complex $C_{\mathcal{F}}(D^\prime_+,R)$ can be identified (as a graded $R$-module) with the complex
\begin{equation}
 C^\bullet_{\mathcal{F}}(D\cup \bigcirc) \oplus C^\bullet_{\mathcal{F}}(D)(-1) \simeq  ( C^\bullet_{\mathcal{F}}(D)\otimes_{R} A )\oplus C^\bullet_{\mathcal{F}}(D)(-1),
\label{eq:identifyingcomplexesR1plus}
\end{equation}
where $(\cdot)$ denotes the (homological) degree shift; that is, given a $\mathbb{Z}^n$ graded module $M^\bullet$, then $\left(M(J)\right)^{I} = M^{I+J}$ for each $I$, $J\in \mathbb{Z}^n$.

Each resolution of $D^\prime_+$ obtained by performing a $0$-resolution of $c_+$ can be identified with a resolution of $D\cup \bigcirc$, while each of the remaining resolutions can be identified with a resolution of $D$. To turn this identification into an isomorphism of $R$-complexes, we identify the complex with the mapping cone of the map $\mathbf{S}_{\mathcal{F}}$ associated to a saddle move. Concretely, we endow the graded $R$-module on the left-hand side of \eqref{eq:identifyingcomplexesR1plus} with the differential
\[ d_\mathcal{F}^\prime = \left( \begin{matrix}
d_{\mathcal{F}}\otimes_{R} id_{A} & 0 \\
\mathbf{S}_{\mathcal{F}}        & d_{\mathcal{F}} 
\end{matrix}\right),\]
where $\mathbf{S}_\mathcal{F}$ is the map associated to a saddle move merging the unknotted component with the circle $\gamma^\prime$ containing \textbf{a}. More explicitly,
\[ \mathbf{S}_{\mathcal{F}}:  C^\bullet_{\mathcal{F}}(D)\otimes_{R} A  \to C^\bullet_{\mathcal{F}}(D):\left( \bigotimes_{\gamma\in\underline{r}} \alpha_{\gamma} \right)\otimes \alpha \mapsto \left( \bigotimes_{\gamma\in\underline{r}\setminus \{ \gamma^\prime \}} \alpha_{\gamma} \right) \otimes m_{\mathcal{F}}(\alpha_{\gamma^\prime}, \alpha).\]
Now, we are ready to define the map associated to the addition of the (positive) curl. This map, denoted by $\Phi_1^+(\mathcal{F})$ or just $\Phi^+_1$, is defined as follows
\begin{align*}
\Phi_{1}^{+}:&\ C^\bullet_{\mathcal{F}}(D) \longrightarrow (C^\bullet_{\mathcal{F}}(D)\otimes_{R} A )\oplus C^\bullet_{\mathcal{F}}(D)(-1) \\
 &\ \bigotimes_{\gamma\in\underline{r}} \alpha_{\gamma} \mapsto \left(\left(\bigotimes_{\gamma\in\underline{r}\setminus \{ \gamma^\prime \}} \alpha_{\gamma}\right) \otimes \left(\alpha_{\gamma^\prime}\otimes t_{\mathcal{F}}(1_{R}) - \Delta_{\mathcal{F}}(\alpha_{\gamma^\prime})\right)\right)\oplus 0
\end{align*}
where $t_{\mathcal{F}}$ is the de-cupped torus map.
To conclude the positive version of the first Reidemeister move, we need to define the map associated to the removal of a positive curl. This map, denoted by $\Psi_1^+(\mathcal{F})$ or simply $\Psi_{1}^{+}$, is given by
\begin{align*}
\Psi_{1}^{+}:&\  (C^\bullet_{\mathcal{F}}(D)\otimes_{R} A )\oplus C^\bullet_{\mathcal{F}}(D)(-1) \longrightarrow C^\bullet_{\mathcal{F}}(D) \\
 &\quad \left(\left( \bigotimes_{\gamma\in\underline{r}} \alpha_{\gamma}\right) \otimes a \right)\ \oplus \bigotimes_{\gamma\in\underline{s}} \delta_{\gamma}\  \mapsto\  \epsilon_{\mathcal{F}}(a) \bigotimes_{\gamma\in\underline{r}} \alpha_{\gamma}.
\end{align*}

Now, let us turn to the negative version of the first Reidemeister move. For our scope it is sufficient to define only the map associated to the creation of a negative curl. Let us denote by $D^\prime_-$ the diagram obtained from $D$ by adding a negative curl on the arc \textbf{a} (see Figure \ref{reidmoves1}). Denote by $c_-$ the crossing of $D^\prime_-$ created by the addition of the curl. 
Similarly to the case of the positive Reidemeister move, there is an identification of the resolutions of $D^\prime_-$ where $c_-$ is replaced with is $0$-resolution and the resolutions of $D$. All the remaining resolutions of $D^\prime_-$ can be identified with the resolutions of $D\cup \bigcirc$. These identifications induce the following isomorphisms of (graded) $R$-modules
\begin{equation}
C_{\mathcal{F}}^\bullet(D^\prime_-) \simeq C_{\mathcal{F}}^\bullet(D)(-1) \oplus C^\bullet_{\mathcal{F}}(D\cup \bigcirc) \simeq   C^\bullet_{\mathcal{F}}(D)(-1)\oplus( C^\bullet_{\mathcal{F}}(D)\otimes_{R} A ). 
\label{eq:identifyingcomplexesR1minus}
\end{equation}

\begin{remark}
Suppose $\mathcal{F}$ is a graded Frobenius algebra. Then the complex $C_\mathcal{F}^\bullet(D,R)$ can be endowed with a second grading (see Subsection \ref{Subs:GradingC_F}). To turn the isomorphisms in \eqref{eq:identifyingcomplexesR1minus} into isomorphisms of \emph{bi-}graded $R$-modules it is necessary to introduce an appropriate quantum degree shift (cf. \cite[Section 6]{BarNatan05cob}). This shift is not necessary in the case of the positive version of the first Reidemeister move.
\end{remark}
As in the case of $R_1^+$, we wish to turn the isomorphisms in \eqref{eq:identifyingcomplexesR1minus} into isomorphisms of chain complexes. In order to do so it is sufficient to endow the rightmost $R$-module in \eqref{eq:identifyingcomplexesR1minus} with the differential
\[ d_\mathcal{F}^\prime = \left( \begin{matrix}
d_{\mathcal{F}} & 0 \\
\mathbf{S}^\prime_\mathcal{F}         & d_{\mathcal{F}} \otimes_{R} id_{A}
\end{matrix}\right);\]
where $\mathbf{S}^\prime_\mathcal{F}$ is the map associated to a saddle move splitting the circle $\gamma^\prime$ containing the arc \textbf{a}. More explicitly,
\[ \mathbf{S}^\prime_\mathcal{F} :  C_{\mathcal{F}}^\bullet(D)  \to C^\bullet_{\mathcal{F}}(D)\otimes_{R} A:\bigotimes_{\gamma\in\underline{r}} \alpha_{\gamma} \mapsto \left(\bigotimes_{\gamma\in\underline{r}\setminus \{ \gamma^\prime \}} \alpha_{\gamma} \right)\otimes \Delta(\alpha_{\gamma^\prime}).\]

\begin{remark}
There is no ambiguity in the labels given to the circles by $\Delta(\alpha_{\gamma^\prime})$ because of the co-commutativity of $\Delta$.
\end{remark}

Finally, we can define the map associated to the addition of a negative curl, denoted by $\Phi_{1}^{-}(\mathcal{F})$ or just $\Phi_{1}^{-}$, as follows
\begin{align*}
\Phi_{1}^{-} :&\ C^\bullet_{\mathcal{F}}(D)\ \longrightarrow\ C^\bullet_{\mathcal{F}}(D)(-1)\oplus (C^\bullet_{\mathcal{F}}(D)\otimes_{R} A ) \\
 &\ \bigotimes_{\gamma\in\underline{r}} \alpha_{\gamma}\ \longmapsto\quad 0\oplus\left(\bigotimes_{\gamma\in\underline{r}} \alpha_{\gamma} \right)\otimes \iota_{\mathcal{F}}(1_{R})
\end{align*}
Now we are finally ready to state (and prove) a result describing the behaviour of $\beta_{\mathcal{F}}(D,R)$ with respect to the maps associated to the first Reidemeister move(s). 

\begin{remark}\label{rem:destabilizzazione_neg}
The map induced by the negative first Reidemeister move has been obtained by composition; the map $\Phi_{1}^{-}$ (resp. its homotopy inverse $\Psi_{1}^{-}$) can be obtained by composing (resp. pre-composing) the map associated to a non-coherent second Reidemeister move (see \cite[Chapter 3, Section 3]{Thesis1} or \cite[Section 4]{BarNatan05cob}) and $\Psi_{1}^{+}$ (resp. $\Phi_{1}^{+}$). With these definitions we have that $\Psi_{1}^{-} \circ \Phi_{1}^{-} = - Id$.
\end{remark}

\begin{proposition}\label{proposition:firstbeta}
Let $D$ be an oriented link diagram. If $D^\prime_{+}$ (resp. $D^\prime_{-}$) is the diagram obtained from $D$ via a positive (resp. negative) first Reidemeister move (with the induced orientation), then
\begin{equation}
\tag{R1p}
\Psi_{1}^{+}(\mathcal{F})(\beta_\mathcal{F}(D)) = \beta_\mathcal{F}(D^\prime_+),\qquad\qquad \Phi_{1}^{+}(\mathcal{F})(\beta_\mathcal{F}(D^\prime_+)) = \beta_\mathcal{F}(D),
\label{eq:invarianceR1pF}
\end{equation}
and
\begin{equation}
\tag{R1n}
(x_1-x_2)(\Phi_{1}^{-})_{*}(\mathcal{F})([\beta_\mathcal{F}(D)]) = -e_{x} [\beta_\mathcal{F}(D^\prime_-)];
\label{eq:invarianceR1nF}
\end{equation}
where $x$ is the label in $\beta_\mathcal{F}(D,R)$ of the circle containing the arc where the first move is performed.
\end{proposition}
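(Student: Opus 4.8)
The proof splits into the positive case \eqref{eq:invarianceR1pF} and the negative case \eqref{eq:invarianceR1nF}, both of which are purely computational: one tracks the effect of the explicit maps $\Phi_1^\pm$, $\Psi_1^\pm$ on the single enhanced state $\beta_{\mathcal{F}}(D)$. The first thing I would establish is the behaviour of the nesting numbers under the two Reidemeister moves. Adding a curl on the arc $\mathbf{a}$ changes the oriented resolution of $D$ only inside a disk: for $R_1^+$ the $0$-resolution of the new crossing $c_+$ produces an extra small circle $\bigcirc$ disjoint from (and innermost with respect to) the circle $\gamma'$ carrying $\mathbf{a}$, so its nesting number differs from that of $\gamma'$ by $1$; for $R_1^-$ the oriented resolution of $D'_-$ is obtained from that of $D$ by replacing $c_-$ with its $0$-resolution, which leaves the circle $\gamma'$ (and all other circles) intact, hence all nesting numbers are unchanged. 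This is the conceptual content; everything else is bookkeeping with the formulas for $\Phi_1^\pm$, $\Psi_1^\pm$ and the identities \eqref{eq:productandcirclebullet1}--\eqref{eq:torusmap}.

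\textbf{The positive case.} Under the identification \eqref{eq:identifyingcomplexesR1plus}, the enhanced state $\beta_{\mathcal{F}}(D'_+)$ lives in the summand $C^\bullet_{\mathcal{F}}(D)\otimes_R A$ (the $0$-resolution of $c_+$): its label on the new circle $\bigcirc$ is determined by $N(\bigcirc) = N(\gamma')+1$, and on $\gamma'$ it carries $\overline{b_{\gamma'}}$ — that is, if $b_{\gamma'}=x_\circ$ then the new circle gets $x_\bullet$ and vice versa, while every other circle keeps its label from $\beta_{\mathcal{F}}(D)$. I would then compute $\Psi_1^+(\beta_{\mathcal{F}}(D))$: writing $\beta_{\mathcal{F}}(D) = (\bigotimes_{\gamma\neq\gamma'}b_\gamma)\otimes b_{\gamma'}\otimes b'$ where $b'\in\{x_\circ,x_\bullet\}$ is the label of $\bigcirc$, the map $\Psi_1^+$ applies $\epsilon_{\mathcal{F}}$ to the last tensor factor; by \eqref{eq:counitandcirclebullet} we have $\epsilon_{\mathcal{F}}(x_\circ)=\epsilon_{\mathcal{F}}(x_\bullet)=1_R$, so $\Psi_1^+(\beta_{\mathcal{F}}(D'_+)) = (\bigotimes_{\gamma\neq\gamma'}b_\gamma)\otimes b_{\gamma'} = \beta_{\mathcal{F}}(D)$ — wait, here one must be careful about which diagram is which; applying this to $\beta_{\mathcal{F}}(D)$ thought of on the curled side gives $\beta_{\mathcal{F}}(D'_+)$, matching the first equation. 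For the second equation $\Phi_1^+(\beta_{\mathcal{F}}(D'_+)) = \beta_{\mathcal{F}}(D)$, I would plug $b_{\gamma'}\in\{x_\circ,x_\bullet\}$ into the formula for $\Phi_1^+$: the relevant factor becomes $b_{\gamma'}\otimes t_{\mathcal{F}}(1_R) - \Delta_{\mathcal{F}}(b_{\gamma'})$. Using \eqref{eq:torusmap}, $t_{\mathcal{F}}(1_R)=x_\circ+x_\bullet$, and \eqref{eq:comultandcirclebullet}, $\Delta_{\mathcal{F}}(x_\circ)=x_\circ\otimes x_\circ$ (resp. $x_\bullet\otimes x_\bullet$). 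Take $b_{\gamma'}=x_\circ$ for definiteness: $x_\circ\otimes(x_\circ+x_\bullet) - x_\circ\otimes x_\circ = x_\circ\otimes x_\bullet$, which is exactly the label pattern $b_{\gamma'}\otimes\overline{b_{\gamma'}}$ of $\beta_{\mathcal{F}}(D'_+)$; symmetrically for $b_{\gamma'}=x_\bullet$ one gets $x_\bullet\otimes x_\circ$. So $\Phi_1^+(\beta_{\mathcal{F}}(D))=\beta_{\mathcal{F}}(D'_+)$ again with the roles of $D$ and $D'_+$ as in the statement. (I'd double-check the direction of each arrow against the statement once the formulas are on the page.)

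\textbf{The negative case.} Here the subtlety — and the main obstacle — is that $\Phi_1^-$ is not an isomorphism onto its image in the naive sense; one only gets an equality after multiplying by $(x_1-x_2)$ and passing to homology, which is why \eqref{eq:invarianceR1nF} is stated for homology classes. Under \eqref{eq:identifyingcomplexesR1minus}, $\Phi_1^-$ sends $\bigotimes_\gamma \alpha_\gamma \mapsto 0\oplus(\bigotimes_\gamma\alpha_\gamma)\otimes\iota_{\mathcal{F}}(1_R)$, i.e. it puts $1_{A_{\mathcal{F}}}$ on the new circle. Since all nesting numbers are preserved, the chain $\beta_{\mathcal{F}}(D'_-)$ (sitting, up to degree shift, in the $C^\bullet_{\mathcal{F}}(D)(-1)$ summand — the $0$-resolution of $c_-$) carries $b_\gamma$ on every old circle and nothing new; meanwhile $\Phi_1^-(\beta_{\mathcal{F}}(D))$ carries $1_{A_{\mathcal{F}}}$ on the extra circle. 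The key algebraic identity is that $1_{A_{\mathcal{F}}}$ and $x_\circ$, $x_\bullet$ are related by $x_\circ - x_\bullet = -(x_1-x_2)1_{A_{\mathcal{F}}} \cdot(\pm 1)$ — precisely \eqref{eq:differencebetweentheconjugatesgeneral}, $\overline{x}=x-e_x(x_2-x_1)1_{A_{\mathcal{F}}}$, which rearranges to $e_x(x_1-x_2)1_{A_{\mathcal{F}}} = \overline{x}-x$, i.e. $(x_1-x_2)1_{A_{\mathcal{F}}} = e_x(\overline{x}-x)$. So the plan is: compute $(x_1-x_2)\Phi_1^-(\beta_{\mathcal{F}}(D))$, use $(x_1-x_2)1_{A_{\mathcal{F}}}=e_x(\overline{x}-x)$ on the new circle's label (where $x=b_{\gamma'}$), and observe that the $\overline{x}=\overline{b_{\gamma'}}$ term, together with the differential applied through $\mathbf{S}'_{\mathcal{F}}=\Delta_{\mathcal{F}}$, produces (modulo $d_{\mathcal{F}}$-boundaries, via \eqref{eq:comultandcirclebullet}: $\Delta_{\mathcal{F}}(b_{\gamma'})=b_{\gamma'}\otimes b_{\gamma'}$) a chain cohomologous to $-e_x\,\beta_{\mathcal{F}}(D'_-)$. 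Concretely I expect the argument to be: $(x_1-x_2)\Phi_1^-(\beta_{\mathcal{F}}(D))$ equals $e_x$ times the difference of the two enhanced states with labels $\overline{b_{\gamma'}}$ and $b_{\gamma'}$ on the new circle; the state with $b_{\gamma'}$ on the new circle is exactly $d_{\mathcal{F}}^\prime$ applied to (a shift of) $\beta_{\mathcal{F}}(D'_-)$ via the off-diagonal $\mathbf{S}'_{\mathcal{F}}$-component followed by the $x_\bullet x_\bullet=...$ cancellations — i.e. it is a boundary — while the state with $\overline{b_{\gamma'}}$ represents $\beta_{\mathcal{F}}(D'_-)$ up to sign after identifying the new circle's nesting parity. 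Hence $(x_1-x_2)[\Phi_1^-(\beta_{\mathcal{F}}(D))] = -e_x[\beta_{\mathcal{F}}(D'_-)]$, as claimed. The one place I'd be most careful is getting the sign $-e_x$ right: it comes from the combination of the sign in \eqref{eq:differencebetweentheconjugatesgeneral}, the sign convention in the mapping-cone differential, and the choice of degree shift, so I would verify it on the two cases $b_{\gamma'}=x_\circ$ and $b_{\gamma'}=x_\bullet$ separately against the normalization $\Psi_1^-\circ\Phi_1^-=-\mathrm{Id}$ recorded in Remark \ref{rem:destabilizzazione_neg}.
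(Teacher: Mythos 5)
Your overall strategy is the same as the paper's: a direct computation with the explicit formulas for $\Phi_1^{\pm}$, $\Psi_1^{+}$, using \eqref{eq:comultandcirclebullet}, \eqref{eq:torusmap} and \eqref{eq:differencebetweentheconjugatesgeneral}. The positive case is handled exactly as in the paper (the identity $x\otimes t_{\mathcal{F}}(1_R)-\Delta_{\mathcal{F}}(x)=x\otimes\overline{x}$ and $\epsilon_{\mathcal{F}}(x_\circ)=\epsilon_{\mathcal{F}}(x_\bullet)=1_R$ are precisely the two computations the paper does), and your remark that the new small circle has nesting parity opposite to that of $\gamma'$ is the point the paper leaves implicit; your observation that the roles of $D$ and $D'_+$ in the displayed equations need to be matched against the definitions of $\Phi_1^+$ and $\Psi_1^+$ is also well taken.

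There is, however, a concrete error in your description of the negative case. You assert that the oriented resolution of $D'_-$ is obtained by taking the $0$-resolution of $c_-$, "leaves $\gamma'$ intact", and that $\beta_{\mathcal{F}}(D'_-)$ sits in the $C^{\bullet}_{\mathcal{F}}(D)(-1)$ summand of \eqref{eq:identifyingcomplexesR1minus} with "nothing new". This is false: a negative curl, like a positive one, produces an extra circle in the Seifert (oriented) resolution, so the oriented resolution of $D'_-$ corresponds to the $1$-resolution of $c_-$ and $\beta_{\mathcal{F}}(D'_-)$ lives in the $C^{\bullet}_{\mathcal{F}}(D)\otimes_R A$ summand — the same summand that receives $\Phi_1^-$ — with label $\overline{b_{\gamma'}}$ on the new circle. (Degree bookkeeping forces this too: the $C^{\bullet}_{\mathcal{F}}(D)(-1)$ summand contributes to homological degree $-1$ at the oriented resolution of $D$, not degree $0$.) Your later sentences quietly use the correct description (you compare the states with $\overline{b_{\gamma'}}$ and $b_{\gamma'}$ on the new circle, and identify the former with $\beta_{\mathcal{F}}(D'_-)$), so the computation you sketch — namely $(x_1-x_2)\iota_{\mathcal{F}}(1_R)=e_x(\overline{x}-x)$ on the new circle, with the $x$-labelled state equal to $d'_{\mathcal{F}}(\beta_{\mathcal{F}}(D)\oplus 0)$ via $\mathbf{S}'_{\mathcal{F}}=\Delta_{\mathcal{F}}$ and hence a boundary — is exactly the paper's argument; but the opening paragraph contradicts it and must be corrected. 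Finally, note that your own intermediate identity yields $(x_1-x_2)[\Phi_1^-(\beta_{\mathcal{F}}(D))]=e_x([\beta_{\mathcal{F}}(D'_-)]-[\eta])=e_x[\beta_{\mathcal{F}}(D'_-)]$, so you cannot simply append "$=-e_x[\beta_{\mathcal{F}}(D'_-)]$ as claimed": you were right to flag the sign as the delicate point, and you need to actually resolve it (against the conventions for $e_x$ and the cone differential) rather than assert agreement with \eqref{eq:invarianceR1nF}.
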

\begin{proof}
Let us start from the addition of a positive curl. Suppose $\alpha_{\gamma^\prime} = x \in \{ x_{\circ}, x_{\bullet} \}$. It follows from Equations \eqref{eq:comultandcirclebullet} and \eqref{eq:torusmap} that
\[ \alpha_{\gamma^\prime}\otimes t_{\mathcal{F}}(1_{R_{\mathcal{F}}}) - \Delta_{\mathcal{F}}(\alpha_{\gamma^\prime}) = x \otimes (x + \overline{x}) -  x \otimes x = x \otimes \overline{x},\]
where $\bar{x}$ denotes the conjugation on the set $\{ x_{\circ}, x_{\bullet} \}$.
Identify the oriented resolution of $D^\prime_+$ with the oriented resolution of $D\cup \bigcirc$ as in the definition of $\Phi^+_1$. From the previous considerations it follows that the label of the un-knotted component which does not belong to $D$ in $\Phi_{1}^{+}(\beta_{\mathcal{F}}(D))$ is $\overline{x}$, the label of $\gamma^\prime$ is $x$, and all the other labels remain unchanged. Thus, it follows immediately that
\[ \Phi_{1}^{+}(\beta_{\mathcal{F}}(D)) = \beta_{\mathcal{F}}(D^\prime_+).\]
To conclude the case of the positive $R_{1}$ move, we must verify that $\beta_{\mathcal{F}}(D,R)$ is preserved by $\Psi_1^{+}$. The claim follows from the following facts: (a) if $a = b_{\gamma^\prime}$ then $\epsilon_{\mathcal{F}}(a) = 1$, (b) the direct summand in $C_{\mathcal{F}}^{\bullet}(D^\prime_+)$ corresponding to the oriented resolution of $D^\prime$ is mapped onto the direct summand in $C_{\mathcal{F}}^{\bullet}(D)$ corresponding to the oriented resolution, and (c) the labels on the circles that are not involved in the move and in the circle $\gamma^\prime$ are left invariant by $\Psi_1^{+}$.

Now, let us turn to the behaviour of $\beta_{\mathcal{F}}(D,R)$ with respect to the map associated to the addition of a negative curl. Immediately from the definition it follows that
\begin{equation}
\Phi_{1}^{-}(\beta(D,R)) = \left(\bigotimes_{\gamma\in\underline{r}} b_{\gamma} \right) \otimes \iota(1_{R}) ,
\label{eq:Phi1negative}
\end{equation}
where $\underline{r}$ denotes the oriented resolution of $D$, and the oriented resolution of $D^\prime_-$ is identified with the oriented resolution of $D\cup \bigcirc$. Consider the chain
\[ \eta = 0\oplus\left( \left(\bigotimes_{\gamma\in\underline{r}} b_{\gamma}  \right)\otimes x\right) =0\oplus( \beta_{\mathcal{F}}(D)\otimes x) \]
in $C^\bullet_{\mathcal{F}}(D_{-}^\prime)$. Directly from the definition of $d^\prime_{\mathcal{F}}$ follows
\[ d^\prime_{\mathcal{F}} (\beta_{\mathcal{F}}(D)\oplus 0 )=  \eta. \]
By Equation \eqref{eq:differencebetweentheconjugatesgeneral} we have
\[ \beta_{\mathcal{F}}(D^\prime_-) =  0\oplus\left( \left(\bigotimes_{\gamma\in\underline{r}} b_{\gamma}  \right)\otimes \bar{x}\right) = \eta - e_{x} (x_1 - x_2)\Phi_{1}^{-}(\beta_{\mathcal{F}}(D)). \]
and the claim follows.
\end{proof}

\subsection{Second Reidemeister move}\label{sec:BetainvII}

Let $D$ be an oriented link diagram. Let \textbf{a} and \textbf{b} be two (un-knotted) arcs of $D$ lying in a small ball. Performing a second Reidemeister move on these arcs inserts two adjacent crossings, say $c_1$ and $c_2$, of opposite type (see Figure \ref{reidmoves2}).

\begin{figure}[h]
\centering
\begin{tikzpicture}[scale = .35, thick]

\node at (-1,1.5) {\textbf{a}};
\node at (-1,-1.5) {\textbf{b}};
\draw (2.5,1.5) .. controls +(-1,-1) and +(1,-1) ..  (-2.5,1.5); 
\draw (-2.5,-1.5) .. controls +(1,1) and +(-1,1) ..  (2.5,-1.5);

\node at (4,0) {$\rightleftharpoons$};

\node at (4,1) {$R_2$};
\draw (11.5,1.5) .. controls +(-1,-3) and +(1,-3) ..  (6.5,1.5);
\pgfsetlinewidth{8*\pgflinewidth}
\draw[white] (6.5,-1.5) .. controls +(1,3) and +(-1,3) ..  (11.5,-1.5);
\pgfsetlinewidth{.125*\pgflinewidth} 
\draw (6.5,-1.5) .. controls +(1,3) and +(-1,3) ..  (11.5,-1.5);

\node at (12,0) {$c_2$};
\node at (6,0) {$c_1$};

\node at (9,-2.5) {$D^{\prime\prime}$};
\node at (0,-2.5) {$D$};
\end{tikzpicture}
\caption{The (un-oriented) second Reidemeister move.}
\label{reidmoves2}
\end{figure}
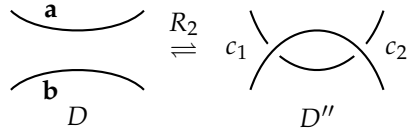

Denote by $D^{\prime\prime}$ the oriented link diagram obtained from $D$ by performing a second Reidemeister move on the arcs \textbf{a} and \textbf{b}. There are four possible resolutions of the pair of crossings $c_1$ and $c_2$. Let $D^{\prime\prime}_{ij}$, with $i,j\in\{ 0,1\}$, be the link obtained from $D^{\prime\prime}$ by performing a $i$-resolution on $c_1$ and a $j$-resolution on $c_2$ (Figure \ref{fig:fourresolutions}).
Notice that there is a natural identification of the link $D_{10}^{\prime\prime}$ with $D$.

\begin{figure}[h]
\centering
\begin{tikzpicture}[scale = .1, thick]
\isplit{-15}{-2}
\osplit{-6}{-2}
\draw (-6,2) .. controls +(-1.5,1.5) and +(1.5,1.5) ..  (-11,2); 
\draw (-11,-2) .. controls +(1.5,-1.5) and +(-1.5,-1.5) ..  (-6,-2);

\draw (-15,2) -- (-16,3); 
\draw (-15,-2) -- (-16,-3);
\draw (-2,2) -- (-1,3); 
\draw (-2,-2) -- (-1,-3);
\node at (-8.5,-7) {$D^{\prime\prime}_{00}$};
\draw[dashed, thin, rounded corners = 1.5] (-16,-4) rectangle (-1,4);

\begin{scope}[shift = {(24,0)}]
\isplit{-15}{-2}
\isplit{-6}{-2}
\draw (-6,2) .. controls +(-1.5,1.5) and +(1.5,1.5) ..  (-11,2); 
\draw (-11,-2) .. controls +(1.5,-1.5) and +(-1.5,-1.5) ..  (-6,-2);

\draw (-15,2) -- (-16,3); 
\draw (-15,-2) -- (-16,-3);
\draw (-2,2) -- (-1,3); 
\draw (-2,-2) -- (-1,-3);
\node at (-8.5,-7) {$D^{\prime\prime}_{01}$};
\draw[dashed, thin, rounded corners = 1.5] (-16,-4) rectangle (-1,4);
\end{scope}

\begin{scope}[shift = {(48, 0)}]
\osplit{-15}{-2}
\osplit{-6}{-2}
\draw (-6,2) .. controls +(-1.5,1.5) and +(1.5,1.5) ..  (-11,2); 
\draw (-11,-2) .. controls +(1.5,-1.5) and +(-1.5,-1.5) ..  (-6,-2);

\draw (-15,2) -- (-16,3); 
\draw (-15,-2) -- (-16,-3);
\draw (-2,2) -- (-1,3); 
\draw (-2,-2) -- (-1,-3);
\node at (-8.5,-7) {$D^{\prime\prime}_{10}$};
\draw[dashed, thin, rounded corners = 1.5] (-16,-4) rectangle (-1,4);
\end{scope}

\begin{scope}[shift = {(72,0)}]
\osplit{-15}{-2}
\isplit{-6}{-2}
\draw (-6,2) .. controls +(-1.5,1.5) and +(1.5,1.5) ..  (-11,2); 
\draw (-11,-2) .. controls +(1.5,-1.5) and +(-1.5,-1.5) ..  (-6,-2);

\draw (-15,2) -- (-16,3); 
\draw (-15,-2) -- (-16,-3);
\draw (-2,2) -- (-1,3); 
\draw (-2,-2) -- (-1,-3);
\node at (-8.5,-7) {$D^{\prime\prime}_{11}$};
\draw[dashed, thin, rounded corners = 1.5] (-16,-4) rectangle (-1,4);
\end{scope}
\end{tikzpicture}
\caption{The possible resolutions of $c_1$ and $c_2$.}
\label{fig:fourresolutions}
\end{figure}
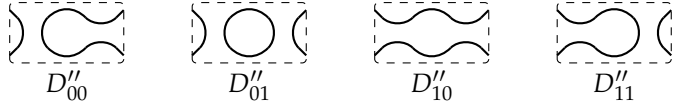

\begin{remark}
Only one among the links $D_{00}^{\prime\prime}$, $D_{10}^{\prime\prime}$, $D_{01}^{\prime\prime}$ and $D_{11}^{\prime\prime}$ inherits the orientation from $D^{\prime\prime}$, and this is either $D_{10}^{\prime\prime}$ or $D_{01}^{\prime\prime}$.
\end{remark}

Similarly to the case of the first Reidemeister move, there is an isomorphism of graded $R$-modules
\begin{equation}
C_\mathcal{F}^{\bullet}(D^{\prime\prime}) \simeq C^{\bullet}_\mathcal{F}(D^{\prime\prime}_{00}) \oplus C^{\bullet}_\mathcal{F}(D^{\prime\prime}_{10})(-1) \oplus C^{\bullet}_\mathcal{F}(D^{\prime\prime}_{01})(-1)\oplus C^{\bullet}_\mathcal{F}(D^{\prime\prime}_{11})(-2).
\label{eq:identifyingcomplexesR2}
\end{equation}
given by the identification of each resolution of $D^{\prime\prime}$ with a resolution of $D^{\prime\prime}_{ij}$ (for a suitable choice of $i$ and $j$). 

\begin{remark}
Assume $\mathcal{F}$ to be a graded Frobenius algebra. To turn the isomorphism in \eqref{eq:identifyingcomplexesR2} into an isomorphism of bi-graded $R$-modules a suitable shift of the quantum degree has to be taken into account (cf. \cite[Section 4]{BarNatan05cob}).
\end{remark}

The isomorphism in \eqref{eq:identifyingcomplexesR2} is not an isomorphism of $R$-complexes. To obtain such an isomorphism it is necessary to modify the differential of the complex on the right-hand-side of \eqref{eq:identifyingcomplexesR2}. This modified differential can be (roughly) defined as follows
\[
d^{\prime\prime}_\mathcal{F}= \begin{pmatrix}
d_\mathcal{F}^{00} & 0 & 0 &  0 \\
\mathbf{S}^{\prime\prime}_{00,10} & d_\mathcal{F}^{10} &  0 & 0\\
\mathbf{S}^{\prime\prime}_{00,01} & 0 & d_\mathcal{F}^{01} &  0 \\
0 & \mathbf{S}^{\prime\prime}_{10,11} & \mathbf{S}^{\prime\prime}_{01,11} & d_\mathcal{F}^{11}  \\
\end{pmatrix}
\]
where $d^{ij}_\mathcal{F}$ is the differential of the complex $C_\mathcal{F}^{\bullet}(D^{\prime\prime}_{ij})$, and
\[\mathbf{S}^{\prime\prime}_{ij,hk}:C_\mathcal{F}^{\bullet}(D^{\prime\prime}_{ij},R) \longrightarrow C_\mathcal{F}^{\bullet}(D^{\prime\prime}_{hk},R) \]
is the map corresponding to a saddle move from $D^{\prime\prime}_{ij}$ to $D^{\prime\prime}_{hk}$. This description is more than sufficient for our scope. The interested reader may consult \cite[Section 5]{Khovanov00} or \cite[Section 4]{BarNatan05cob} for a more detailed description of $d^{\prime\prime}_{\mathcal{F}}$. 

Now, consider the diagram $D^{\prime\prime}_{01}$. Denote by \textbf{c} and \textbf{d} the two arcs appearing in the local picture in Figure \ref{fig:fourresolutions} (see also Figure \ref{fig:arcsinLprimeprime01}). Fix an arc \textbf{g}, meeting $D^{\prime\prime}_{01}$ only at the endpoints, joining \textbf{c} and \textbf{d}. Finally, fix an arc \textbf{e}, meeting $D$ only at the endpoints, joining the arcs \textbf{a} and \textbf{b}.

\begin{figure}[h]
\centering
\begin{tikzpicture}[scale = .15, thick]

\isplit{-15}{-2}
\isplit{-6}{-2}
\draw (-6,2) .. controls +(-1.5,1.5) and +(1.5,1.5) ..  (-11,2); 
\draw (-11,-2) .. controls +(1.5,-1.5) and +(-1.5,-1.5) ..  (-6,-2);

\draw (-15,2) -- (-16,3); 
\draw (-15,-2) -- (-16,-3);
\draw (-2,2) -- (-1,3); 
\draw (-2,-2) -- (-1,-3);
\node at (-8.5,-6.5) {$D^{\prime\prime}_{01}$};
\node at (-4.5,-2.5) {$\gamma^{\prime\prime}$};

\draw[green, dashed] (-15,2) .. controls +(1.5,3.5) and +(-1.5,3.5) ..  (-2,2);
\node at (-8.5,6) {\textbf{g}};

\node at (-2,0) {\textbf{d}};
\node at (-15,0) {\textbf{c}};

\draw (-21,3) .. controls +(-1.5,-1.5) and +(1.5,-1.5) .. (-36,3);
\draw (-21,-3) .. controls +(-1.5,1.5) and +(1.5,1.5) .. (-36,-3);

\node at (-27,3) {\textbf{a}};
\node at (-27,-3) {\textbf{b}};

\node at (-29.5,0) {\textbf{e}};
\draw[green,dashed] (-28.5,-1.8) -- (-28.5,2);
\node at (-28.5,-6.5) {$D = D^{\prime\prime}_{10}$};
\end{tikzpicture}
\caption{}
\label{fig:arcsinLprimeprime01}
\end{figure}

With the notation defined above, and using the notation introduced in Subsection 3.2, we can finally define the map
\[ \Psi_2: C_\mathcal{F}^{\bullet} (D) \longrightarrow C^{\bullet}_\mathcal{F}(D^{\prime\prime}_{00}) \oplus C^{\bullet}_\mathcal{F}(D^{\prime\prime}_{10})(-1) \oplus C^{\bullet}_\mathcal{F}(D^{\prime\prime}_{01})(-1)\oplus C^{\bullet}_\mathcal{F}(D^{\prime\prime}_{11})(-2)\]
as follows
\[ \Psi_2(x) = 0 \oplus x \oplus \left( \mathbf{S}(x,\text{\textbf{e}}) \otimes \iota (1_R ) \right) \oplus 0,\]
where $x$ is an enhanced state, $D$ and $D^{\prime\prime}_{10}$ have been identified, and $\iota(1_R)$ is the label of $\gamma^{\prime\prime}$ (cf. Figure \ref{fig:arcsinLprimeprime01}). Similarly, the up-to-chain-homotopy inverse of $\Psi_2$
\[ \Phi_2:  C^{\bullet}_\mathcal{F}(D^{\prime\prime}_{00}) \oplus C^{\bullet}_\mathcal{F}(D^{\prime\prime}_{10})(-1) \oplus C^{\bullet}_\mathcal{F}(D^{\prime\prime}_{01})(-1)\oplus C^{\bullet}_\mathcal{F}(D^{\prime\prime}_{11})(-2) \longrightarrow C_\mathcal{F}^{\bullet} (D)\]
is given by
\[ \Phi_2 (x_{00}\oplus x_{10} \oplus x_{01} \oplus x_{11}) = x_{10} + \epsilon(x_{\gamma^{\prime\prime}})\mathbf{S}(x_{01},\text{\textbf{g}}),\]
where $x_{ij}$ denotes a (possibly trivial) enhanced state in $C_{\mathcal{F}}^{\bullet}(D^{\prime\prime}_{ij})$, and $x_{\gamma^{\prime\prime}}$ denotes the label of $\gamma^{\prime\prime}$ in $x_{01}$ (cf. Figure \ref{fig:arcsinLprimeprime01}).

Before stating the results concerning $\beta_{\mathcal{F}}(D,R)$ recall that a $R_2$ move is coherent if the arcs \textbf{a} and \textbf{b} involved are oriented as in the right of Figure \ref{fig:coherentR2}.

\begin{figure}[h]
\centering
\begin{tikzpicture}[scale = .1, thick, xscale = -1]

\ocross{-15}{-2}
\icross{-6}{-2}
\draw (-6,2) .. controls +(-1.5,1.5) and +(1.5,1.5) ..  (-11,2); 
\draw (-11,-2) .. controls +(1.5,-1.5) and +(-1.5,-1.5) ..  (-6,-2);

\draw[->] (-15,2) -- (-16,3); 
\draw[->] (-15,-2) -- (-16,-3);
\draw (-2,2) -- (-1,3); 
\draw (-2,-2) -- (-1,-3);

\node at (3,0) {$\rightleftharpoons$};

\draw[->] (21,3) .. controls +(-2,-2) and +(2,-2) ..  (8,3); 
\draw[<-] (8,-3) .. controls +(2,2) and +(-2,2) ..  (21,-3);

\begin{scope}[shift = {+(60,0)}]

\ocross{-15}{-2}
\icross{-6}{-2}
\draw (-6,2) .. controls +(-1.5,1.5) and +(1.5,1.5) ..  (-11,2); 
\draw (-11,-2) .. controls +(1.5,-1.5) and +(-1.5,-1.5) ..  (-6,-2);

\draw[->] (-15,2) -- (-16,3); 
\draw (-15,-2) -- (-16,-3);
\draw (-2,2) -- (-1,3); 
\draw[->] (-2,-2) -- (-1,-3);

\node at (3,0) {$\rightleftharpoons$};

\draw[->] (21,3) .. controls +(-2,-2) and +(2,-2) ..  (8,3); 
\draw[->] (8,-3) .. controls +(2,2) and +(-2,2) ..  (21,-3);
\end{scope}
\end{tikzpicture}
\caption{Non-coherent (left) and coherent (right) versions of the second Reidemeister move.}
\label{fig:coherentR2}
\end{figure}
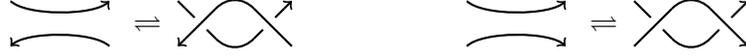

\begin{proposition}\label{proposition:secondcoerbeta}
Let $D$ be an oriented link diagram. Let $D^{\prime\prime}$ be the oriented link diagram obtained from $D$ via a coherent second Reidemeister move. Then
\begin{equation}
\tag{R2c}
\Psi_{2}(\beta_{\mathcal{F}}(D,R)) = \beta_{\mathcal{F}}(D^{\prime\prime},R)\quad \text{and}\quad \Phi_{2}(\beta_{\mathcal{F}}(D^{\prime\prime},R)) = \beta_{\mathcal{F}}(D,R).
\label{eq:invarianceR2c}
\end{equation}
\end{proposition}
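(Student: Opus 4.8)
The plan is to verify the two identities in \eqref{eq:invarianceR2c} by a direct computation with enhanced states, tracking how the nesting numbers (and hence the labels $x_\circ,x_\bullet$) of the circles change under a coherent $R_2$ move. First I would set up the bookkeeping: in the oriented resolution of $D$ the arcs $\mathbf{a}$ and $\mathbf{b}$ lie on circles of $\underline{r}$ (the two arcs are coherently oriented, which is exactly the condition that makes $D^{\prime\prime}_{10}$ — equivalently $D^{\prime\prime}_{01}$ — the oriented resolution of $D^{\prime\prime}$). The oriented resolution of $D^{\prime\prime}$ is $D^{\prime\prime}_{01}$ (or $D^{\prime\prime}_{10}$, depending on conventions), and under the identification of the complexes in \eqref{eq:identifyingcomplexesR2} the chain $\beta_{\mathcal F}(D^{\prime\prime},R)$ lives in the summand $C^{\bullet}_{\mathcal F}(D^{\prime\prime}_{01})(-1)$.

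The key step is to compute $\Psi_2(\beta_{\mathcal F}(D,R))$. By definition $\Psi_2(x) = 0 \oplus x \oplus (\mathbf{S}(x,\mathbf{e})\otimes\iota(1_R)) \oplus 0$. I would argue that the $D^{\prime\prime}_{10}$-summand ($0\oplus x\oplus\cdots$) is \emph{not} in the oriented resolution of $D^{\prime\prime}$ when the move is coherent, so the content of $\beta_{\mathcal F}(D^{\prime\prime})$ must come entirely from the $D^{\prime\prime}_{01}$-summand, i.e.\ from $\mathbf{S}(\beta_{\mathcal F}(D),\mathbf{e})\otimes\iota(1_R)$; here the new circle $\gamma^{\prime\prime}$ carries $\iota(1_R)=1_{A_{\mathcal F}}$, which must be rewritten in terms of $x_\circ,x_\bullet$. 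The crucial geometric fact is that the saddle $\mathbf{S}(\cdot,\mathbf{e})$ along $\mathbf{e}$ turns the piece of $D$ near $\mathbf{a},\mathbf{b}$ into the $(00)$- or $(11)$-type picture, and the arc $\mathbf{e}$ either merges the two circles through $\mathbf{a},\mathbf{b}$ or splits one of them; I would check that the coherent orientation forces this to be a \emph{split} (in the $D^{\prime\prime}_{01}$ resolution $\gamma^{\prime\prime}$ is a small innermost circle). Then $\mathbf S$ applies $\Delta_{\mathcal F}$ to the label $b_\gamma\in\{x_\circ,x_\bullet\}$ of the circle being split, and by \eqref{eq:comultandcirclebullet} we get $\Delta_{\mathcal F}(x_*) = x_*\otimes x_*$. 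Matching this against $1_{A_{\mathcal F}}$ on $\gamma^{\prime\prime}$: since $x_\circ + x_\bullet$ equals (a unit multiple of) $1_{A_{\mathcal F}}$ up to a $1_{A_{\mathcal F}}$-term via \eqref{eq:differencebetweentheconjugatesgeneral}, I need the nesting numbers to arrange so that $\gamma^{\prime\prime}$ (with $N(\gamma^{\prime\prime})$ differing by one from its parent) carries exactly $\overline{b_\gamma}$ in $\beta_{\mathcal F}(D^{\prime\prime})$ — this is where one must be careful, comparing $\Delta_{\mathcal F}(1_{A_{\mathcal F}}) = x_\circ\otimes 1 + 1\otimes x_\bullet$ with the actual labels demanded by the nesting-number rule. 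So the real content is: (i) identify which circles of the oriented resolution of $D^{\prime\prime}$ correspond to circles of $\underline r(D)$ and that their nesting numbers are unchanged, (ii) show the newly created circle $\gamma^{\prime\prime}$ has nesting number $N(\gamma)\pm 1$ where $\gamma$ is its parent, so its label is the conjugate, and (iii) verify the algebraic identity $\mathbf S(b_\gamma,\mathbf e)\otimes\iota(1_R) = b_\gamma\otimes b_{\gamma^{\prime\prime}}$ under the appropriate matching, using \eqref{eq:comultandcirclebullet} and the computation $\alpha\otimes t_{\mathcal F}(1)-\Delta_{\mathcal F}(\alpha)=\alpha\otimes\overline\alpha$ already established in the proof of Proposition \ref{proposition:firstbeta}. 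The $\Phi_2$ identity is then essentially forced by the same computation run backwards: $\Phi_2(x_{00}\oplus x_{10}\oplus x_{01}\oplus x_{11}) = x_{10} + \epsilon(x_{\gamma^{\prime\prime}})\mathbf S(x_{01},\mathbf g)$, and applied to $\beta_{\mathcal F}(D^{\prime\prime})$ (whose only nonzero component is $x_{01}$, with $x_{\gamma^{\prime\prime}}=b_{\gamma^{\prime\prime}}$ so $\epsilon(x_{\gamma^{\prime\prime}})=1$ by \eqref{eq:counitandcirclebullet}) it gives $\mathbf S(\beta_{\mathcal F}(D^{\prime\prime}_{01}),\mathbf g)$; the saddle $\mathbf S(\cdot,\mathbf g)$ merges $\gamma^{\prime\prime}$ back with $\gamma$, and $m_{\mathcal F}(b_\gamma,\overline{b_\gamma}) = m_{\mathcal F}(x_\circ,x_\bullet) = 0$ — wait, that vanishes, so instead the relevant merge must be $m_{\mathcal F}(b_\gamma, b_\gamma)$ or the parent-circle bookkeeping is different; I would resolve this by carefully re-examining which circle $\gamma^{\prime\prime}$ merges with along $\mathbf g$ and concluding that the surviving label reconstructs $b_\gamma$ exactly, so $\Phi_2(\beta_{\mathcal F}(D^{\prime\prime})) = \beta_{\mathcal F}(D)$.

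The main obstacle I anticipate is precisely this nesting-number bookkeeping near the second and third display: one must be scrupulous about (a) which of $D^{\prime\prime}_{10}$, $D^{\prime\prime}_{01}$ is the oriented resolution of $D^{\prime\prime}$ given the coherent orientation, (b) the exact behaviour of the nesting number $N$ of the circle $\gamma^{\prime\prime}$ created by the saddle versus its parent circle in $\underline r(D)$, and (c) the sign/unit discrepancies hidden in \eqref{eq:differencebetweentheconjugatesgeneral} — although for the \emph{coherent} move I expect these to cancel cleanly (no $(x_1-x_2)$ factors appear in \eqref{eq:invarianceR2c}, unlike the negative $R_1$ case), so the final identity should be on the nose. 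A clean way to organize this is to first reduce to the case where $D$ consists only of the two arcs $\mathbf a,\mathbf b$ together with $\mathbf e$ closing them up (using Remark \ref{rem:disjointunionandtensor} and naturality of $\mathbf S$ in the circles not involved), then check the identity in this minimal local model where the circle count is small enough to enumerate, and finally invoke that all maps in question ($\mathbf S$, $\Delta_{\mathcal F}$, $m_{\mathcal F}$, $\Psi_2$, $\Phi_2$) act as the identity on the tensor factors of the circles untouched by the move.
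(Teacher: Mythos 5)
Your proposal is built on a misidentification that makes the whole computation run in the wrong summand. For a \emph{coherent} $R_2$ move the oriented resolution of $D^{\prime\prime}$ is identified with the oriented resolution of $D^{\prime\prime}_{10}\cong D$, not with $D^{\prime\prime}_{01}$: the two added crossings have opposite signs, and with the coherent orientation the orientation-induced local resolutions are exactly the $(1,0)$ pair. Consequently $\beta_{\mathcal F}(D^{\prime\prime})$ is simply $0\oplus\beta_{\mathcal F}(D)\oplus 0\oplus 0$ under \eqref{eq:identifyingcomplexesR2}, and it could never equal $\mathbf S(\beta_{\mathcal F}(D),\mathbf e)\otimes\iota(1_R)$ in any case, since the latter carries the label $\iota(1_R)=1_{A_{\mathcal F}}$ on $\gamma^{\prime\prime}$ rather than $x_\circ$ or $x_\bullet$ — the tension you noticed when trying to match $\Delta_{\mathcal F}(x_*)=x_*\otimes x_*$ against $1_{A_{\mathcal F}}$ is a symptom of this, not a bookkeeping subtlety to be smoothed over. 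A second, related error: the saddle along $\mathbf e$ is a \emph{merge}, not a split. Coherence forces $\mathbf a$ and $\mathbf b$ to lie on \emph{distinct} circles of the oriented resolution of $D$ (Jordan curve theorem), and these circles have nesting numbers of opposite parity, hence conjugate labels $b_{\gamma_{\mathbf a}}$ and $\overline{b_{\gamma_{\mathbf a}}}$.

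Once these two facts are in place the argument is short and is the one the paper gives: the $D^{\prime\prime}_{01}$-component of $\Psi_2(\beta_{\mathcal F}(D))$ is $\bigl(\bigotimes_{\gamma\neq\gamma_{\mathbf a},\gamma_{\mathbf b}}b_\gamma\bigr)\otimes m(b_{\gamma_{\mathbf a}},\overline{b_{\gamma_{\mathbf a}}})\otimes\iota(1_R)$, which vanishes by \eqref{eq:productandcirclebullet2}, leaving $\Psi_2(\beta_{\mathcal F}(D))=0\oplus\beta_{\mathcal F}(D)\oplus 0\oplus 0=\beta_{\mathcal F}(D^{\prime\prime})$; and the correction term $\epsilon(x_{\gamma^{\prime\prime}})\mathbf S(x_{01},\mathbf g)$ in $\Phi_2$ vanishes for the same reason, giving $\Phi_2(\beta_{\mathcal F}(D^{\prime\prime}))=\beta_{\mathcal F}(D)$. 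The point you flagged with ``wait, that vanishes'' is not an obstacle to be circumvented by re-examining the parent-circle bookkeeping — the vanishing \emph{is} the proof. You should also discard the claim that the $D^{\prime\prime}_{10}$-summand is not the oriented resolution; that would be the situation for the non-coherent version of the move, which is precisely why the proposition is stated only for coherent moves.
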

\begin{proof}
Throughout this proof we will keep the notation shown in Figure \ref{fig:arcsinLprimeprime01}. Let $\underline{r}$ be the oriented resolution of $D$. 
First, let us investigate the behaviour of $\beta_{\mathcal{F}}(D,R)$ with respect to the map $\Psi_2$. It is a simple consequence of the Jordan curve theorem that if the move is coherent then \textbf{a} and \textbf{b} do not belong to the same circle in $\underline{r}$. Let $\gamma_{\text{\textbf{a}}}$ and $\gamma_{\text{\textbf{b}}}$ be the circles to which \textbf{a} and \textbf{b}, respectively, belong to. It follows directly from the definitions that
\[ \Psi_2(\beta_{\mathcal{F}}(D)) = 0 \oplus \beta_{\mathcal{F}}(D) \oplus \left( \left( \bigotimes_{\gamma\in \underline{r}\setminus \{ \gamma_{\text{\textbf{a}}},\gamma_{\text{\textbf{b}}}\}} b_\gamma \right)\otimes m(b_{\gamma_{\text{\textbf{a}}}},b_{\gamma_{\text{\textbf{b}}}}) \otimes \iota(1_{R}) \right) \oplus 0.\]
As the move is coherent the labels in $\beta_{\mathcal{F}}(D,R)$ of $\gamma_{\text{\textbf{a}}}$ and of $\gamma_{\text{\textbf{b}}}$ are conjugate. Thus, by Equation \eqref{eq:productandcirclebullet2} we have
\[ m(b_{\gamma_{\text{\textbf{a}}}},b_{\gamma_{\text{\textbf{b}}}}) = m(b_{\gamma_{\text{\textbf{a}}}},\overline{b_{\gamma_{\text{\textbf{a}}}}}) =0. \]
Another consequence of the coherence of the move is that the oriented resolution of $D^{\prime\prime}$ is identified (via the isomorphism in \eqref{eq:identifyingcomplexesR2}) with the oriented resolution of $D^{\prime\prime}_{10}$. Thus, it follows that
\[ \Psi_2(\beta_{\mathcal{F}}(D,R)) = \beta_{\mathcal{F}}(D^{\prime\prime},R).\]
As we argued before, the isomorphism in \eqref{eq:identifyingcomplexesR2} sends $\beta_{\mathcal{F}}(D^{\prime\prime},R)$ to
\[ 0 \oplus \beta_{\mathcal{F}}(D,R) \oplus 0 \oplus 0. \]
With the same reasoning as above, from the coherence of the move it follows that
\[\gamma_{\text{\textbf{a}}} \ne \gamma_{\text{\textbf{b}}}\quad \text{and}\quad b_{\gamma_{\text{\textbf{b}}}} = \overline{b_{\gamma_{\text{\textbf{a}}}}}. \]
From Equation \eqref{eq:productandcirclebullet1}, and from the considerations just made, we obtain
\[ \mathbf{S}(\beta_{\mathcal{F}}(D,R),\text{\textbf{g}}) = 0. \]
Since
\[\Phi_2 (0 \oplus \beta_{\mathcal{F}}(D,R) \oplus 0 \oplus 0 ) = \beta(D,R) + \mathbf{S}(\beta_{\mathcal{F}}(D,R),\text{\textbf{g}})\]
the claim follows.
\end{proof}

\subsection{Third Reidemeister move}
We have arrived at the case of the third Reidemeister move. 
This move is the hardest to deal with because it comes in several versions. Moreover, the number of crossings is equal across both sides of the move, so there is no loss of complexity.

We shall avoid giving an explicit description of these maps,  and instead describe the procedure used to associate a map to each version of the third Reidemeister move.

It is necessary to remark that the set of all versions of third Reidemeister moves can be seen as generated by a sequence of (coherent versions of the) second Reidemeister moves and the moves in Figure \ref{Fig:generatorsR3} (see \cite[Lemma 2.6]{Polyak10}). 

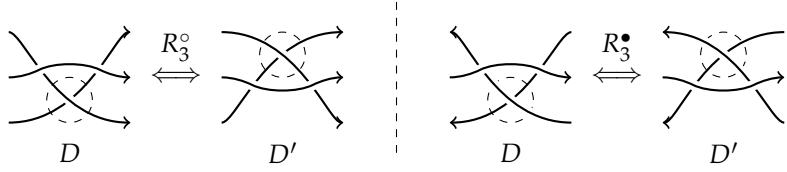
\begin{figure}[h]
\centering
\begin{tikzpicture}[scale =.2]
\draw[thick,->] (-4,-3)  .. controls +(5,0)  and +(-1,0) ..(4,3);

\pgfsetlinewidth{10*\pgflinewidth}
\draw[white] (-4,3)  .. controls +(1,0)  and +(-5,0) ..(4,-3);
\pgfsetlinewidth{.1*\pgflinewidth}
\draw[thick,->] (-4,3)  .. controls +(1,0)  and +(-5,0) ..(4,-3);
\pgfsetlinewidth{10*\pgflinewidth}
\draw[white] (-4,0)  .. controls +(.75,0) and +(-.5,-0.25) .. (-2,0.5) .. controls +(1,.5) and +(-1,0.5) .. (2,0.5) .. controls +(.5,-0.25) and +(-.75,0) .. (4,0);
\pgfsetlinewidth{.1*\pgflinewidth}
\draw[thick,->]  (-4,0)  .. controls +(.75,0) and +(-.5,-0.25) .. (-2,0.5) .. controls +(1,.5) and +(-1,0.5) .. (2,0.5) .. controls +(.5,-0.25) and +(-.75,0) .. (4,0);
\draw[dashed] (0,-1.5) circle (1.5);

\begin{scope}[shift={+(14,0)}]
\draw[thick,->] (-4,-3)  .. controls +(1,0)  and +(-5,0) ..(4,3);

\pgfsetlinewidth{10*\pgflinewidth}
\draw[white](-4,3)  .. controls +(5,0)  and +(-1,0) ..(4,-3);
\pgfsetlinewidth{.1*\pgflinewidth}
\draw[thick,->] (-4,3)  .. controls +(5,0)  and +(-1,0) ..(4,-3);
\draw[dashed] (0,1.5) circle (1.5);
\pgfsetlinewidth{10*\pgflinewidth}
\draw[white]  (-4,0)  .. controls +(.75,0) and +(-.5,0.25) .. (-2,-0.5) .. controls +(1,-.5) and +(-1,-0.5) .. (2,-0.5) .. controls +(.5,0.25) and +(-.75,0) .. (4,0);
\pgfsetlinewidth{.1*\pgflinewidth}
\draw[thick,->]  (-4,0)  .. controls +(.75,0) and +(-.5,0.25) .. (-2,-0.5) .. controls +(1,-.5) and +(-1,-0.5) .. (2,-0.5) .. controls +(.5,0.25) and +(-.75,0) .. (4,0);
\end{scope}

\begin{scope}[shift={+(29,0)}]
\draw[thick,<-] (-4,-3)  .. controls +(5,0)  and +(-1,0) ..(4,3);
\draw[dashed] (0,-1.5) circle (1.5);
\pgfsetlinewidth{10*\pgflinewidth}
\draw[white] (-4,3)  .. controls +(1,0)  and +(-5,0) ..(4,-3);
\pgfsetlinewidth{.1*\pgflinewidth}
\draw[thick,<-] (-4,3)  .. controls +(1,0)  and +(-5,0) ..(4,-3);
\pgfsetlinewidth{10*\pgflinewidth}
\draw[white] (-4,0)  .. controls +(.75,0) and +(-.5,-0.25) .. (-2,0.5) .. controls +(1,.5) and +(-1,0.5) .. (2,0.5) .. controls +(.5,-0.25) and +(-.75,0) .. (4,0);
\pgfsetlinewidth{.1*\pgflinewidth}
\draw[thick,->]  (-4,0)  .. controls +(.75,0) and +(-.5,-0.25) .. (-2,0.5) .. controls +(1,.5) and +(-1,0.5) .. (2,0.5) .. controls +(.5,-0.25) and +(-.75,0) .. (4,0);
\end{scope}

\begin{scope}[shift={+(43,0)}]
\draw[dashed] (0,1.5) circle (1.5);
\draw[thick,<-] (-4,-3)  .. controls +(1,0)  and +(-5,0) ..(4,3);
\pgfsetlinewidth{10*\pgflinewidth}
\draw[white]  (-4,3)  .. controls +(5,0)  and +(-1,0) ..(4,-3);
\pgfsetlinewidth{.1*\pgflinewidth}
\draw[thick,<-] (-4,3)  .. controls +(5,0)  and +(-1,0) ..(4,-3);

\pgfsetlinewidth{10*\pgflinewidth}
\draw[white]  (-4,0)  .. controls +(.75,0) and +(-.5,0.25) .. (-2,-0.5) .. controls +(1,-.5) and +(-1,-0.5) .. (2,-0.5) .. controls +(.5,0.25) and +(-.75,0) .. (4,0);
\pgfsetlinewidth{.1*\pgflinewidth}
\draw[thick,->]  (-4,0)  .. controls +(.75,0) and +(-.5,0.25) .. (-2,-0.5) .. controls +(1,-.5) and +(-1,-0.5) .. (2,-0.5) .. controls +(.5,0.25) and +(-.75,0) .. (4,0);
\end{scope}
\draw[dashed, thin] (21.5,5) -- (21.5,-5);

\node at (36,0) {$\Longleftrightarrow$};
\node at (7,0) {$\Longleftrightarrow$};
\node at (7,2) {$R_{3}^{\circ}$};
\node at (36,2) {$R_{3}^{\bullet}$};
\node at (0,-5) {$D$};
\node at (14,-5) {$D^\prime$};
\node at (29,-5) {$D$};
\node at (43,-5) {$D^\prime$};
\end{tikzpicture}
\caption{Two version of the third Reidemeister move.}\label{Fig:generatorsR3}
\end{figure}

Recall that a third Reidemeister move is braid-like (or coherent) if it can be realized as a relation in the braid group. All braid-like third Reidemeister moves can be obtained from the $R_{3}^{\circ}$ move via a sequence of coherent $R_2$ moves (\cite[Lemma 2.6]{Polyak10}). So, it is sufficient to prove the invariance of the $\beta_{\mathcal{F}}$-cycles with respect to $R_{3}^{\circ}$ move. 

In order to define the map associated to $R^\circ_3$ and $R^\bullet_3$ we make use of the so-called categorified Kauffman trick (\cite[Section 4]{BarNatan05cob}). All the maps associated to the other third Reidemeister moves will be defined as a composition. Since we are concerned only with braid-like moves, we shall describe explicitly only the map associated of the $R_{3}^\circ$ move. The map associated to the $R_{3}^\bullet$ move can be described similarly.

First, write the complexes associated to both sides of the Reidemeister move as cones. More precisely,
\[ C_{\mathcal{F}}(D) = \text{Cone}\left( \mathbf{S}: C_{\mathcal{F}}(D_{0})\to C_{\mathcal{F}}(D_{1})\right),\]
where $D_{i}$ is the diagram obtained by performing the $i$-resolution on the crossing highlighted in Figure \ref{Fig:generatorsR3}, and $\mathbf{S}$ is the map associated to the saddle connecting the diagrams $D_{0}$ and $D_{1}$. An analogous reasoning works for $D^\prime$.

\begin{remark}
One can define the maps associated to the braid-like third Reidemeister moves directly using the categorified Kauffman trick, instead of defining them by composition. These maps may differ from those obtained by composition. Nonetheless, Lemma \ref{lemma:techthird} applies almost verbatim, and one obtains that the maps defined via the categorified Kauffman trick still preserve the $\beta_{\mathcal{F}}$-cycles.
\end{remark}

One notices that the links $D_{0}$ and $D^\prime_{0}$ (with the obvious notation) are related to the link $D^{\prime \prime}$ (Figure \ref{Fig:terzaintermedia}) by a coherent $R_{2}$. This implies that there are maps induced by the two $R_{2}$ moves,
\[ f: C_{\mathcal{F}}(D_{0})\to C_{\mathcal{F}}(D^{\prime\prime}) \quad f^\prime: C_{\mathcal{F}}(D^\prime_{0})\to C_{\mathcal{F}}(D^{\prime\prime}),\]
which are quasi-isomorphisms. Denote the respective up-to-homotopy inverses by $g$ and $g^\prime$.

\begin{figure}[h]
\centering
\begin{tikzpicture}[scale =.2]
\draw[thick,->] (-4,-3) .. controls +(2,0)  and +(-2,0) .. (0,-2) .. controls +(2,0)  and +(-2,0) ..(4,-3);

\pgfsetlinewidth{10*\pgflinewidth}
\draw[white] (-4,3) .. controls +(2,0)  and +(-2,0) .. (0,-1) .. controls +(2,0)  and +(-2,0) ..(4,3);
\pgfsetlinewidth{.1*\pgflinewidth}
\draw[thick,->] (-4,3) .. controls +(2,0)  and +(-2,0) .. (0,-1) .. controls +(2,0)  and +(-2,0) ..(4,3);
\pgfsetlinewidth{10*\pgflinewidth}
\draw[white] (-4,0)  .. controls +(.75,0) and +(-.5,-0.25) .. (-2,0.5) .. controls +(1,.5) and +(-1,0.5) .. (2,0.5) .. controls +(.5,-0.25) and +(-.75,0) .. (4,0);
\pgfsetlinewidth{.1*\pgflinewidth}
\draw[thick,->]  (-4,0)  .. controls +(.75,0) and +(-.5,-0.25) .. (-2,0.5) .. controls +(1,.5) and +(-1,0.5) .. (2,0.5) .. controls +(.5,-0.25) and +(-.75,0) .. (4,0);

\begin{scope}[shift={+(14,0)}]
\draw[thick,->] (-4,3) -- (4,3);

\draw[thick,->] (-4,-3)--(4,-3);

\draw[thick,->]  (-4,0) -- (4,0);
\end{scope}

\begin{scope}[shift={+(28,0)}]
\draw[thick,->] (-4,3) .. controls +(2,0)  and +(-2,0) .. (0,2) .. controls +(2,0)  and +(-2,0) ..(4,3);

\pgfsetlinewidth{10*\pgflinewidth}
\draw[white] (-4,-3) .. controls +(2,0)  and +(-2,0) .. (0,1) .. controls +(2,0)  and +(-2,0) ..(4,-3);
\pgfsetlinewidth{.1*\pgflinewidth}
\draw[thick,->] (-4,-3) .. controls +(2,0)  and +(-2,0) .. (0,1) .. controls +(2,0)  and +(-2,0) ..(4,-3);
\pgfsetlinewidth{10*\pgflinewidth}
\draw[white] (-4,0)  .. controls +(.75,0) and +(-.5,0.25) .. (-2,-0.5) .. controls +(1,-.5) and +(-1,-0.5) .. (2,-0.5) .. controls +(.5,0.25) and +(-.75,0) .. (4,0);
\pgfsetlinewidth{.1*\pgflinewidth}
\draw[thick,->]  (-4,0)  .. controls +(.75,0) and +(-.5,0.25) .. (-2,-0.5) .. controls +(1,-.5) and +(-1,-0.5) .. (2,-0.5) .. controls +(.5,0.25) and +(-.75,0) .. (4,0);
\end{scope}
\node at (0,-5) {$D_{0}$};
\node at (14,-5) {$D^{\prime\prime}$};
\node at (28,-5) {$D^\prime_{0}$};
\end{tikzpicture}
\caption{The links $D_{0}$, $D^{\prime\prime}$ and $D_{0}^\prime$.}\label{Fig:terzaintermedia}
\end{figure}
The main point is that these maps are respectively a strong deformation retract and an inclusion in a deformation retract (\cite[Definition 4.3]{BarNatan05cob}). Hence, by \cite[Lemma 4.5]{BarNatan05cob} we have the quasi-isomorphisms
\[\Phi:\: C_{\mathcal{F}}(D) =\text{Cone}\left( \mathbf{S}: C_{\mathcal{F}}(D_{0})\to C_{\mathcal{F}}(D_{1})\right) \rightarrow \text{Cone}\left( \mathbf{S} \circ g : C_{\mathcal{F}}(D^{\prime\prime})\to C_{\mathcal{F}}(D_{1})\right)\]
and
\[\Phi^\prime:\: C_{\mathcal{F}}(D^\prime) =\text{Cone}\left( \mathbf{S} : C_{\mathcal{F}}(D^\prime_{0})\to C_{\mathcal{F}}(D^\prime_{1})\right) \rightarrow \text{Cone}\left( \mathbf{S} \circ g^\prime: C_{\mathcal{F}}(D^{\prime\prime})\to C_{\mathcal{F}}(D_{1})\right),\]
as well as their up to homotopy inverses $\Psi$ and $\Psi^\prime$. Moreover, these maps can be explicitly computed in terms of $f$, $f^\prime$ and their up-to-homotopy inverses $g$ and $g^\prime$.
Finally, one notices that the cones over $ \mathbf{S} \circ g$ and over $ \mathbf{S} \circ g^\prime$ can be identified. Using this identification, the maps associated to the $R_{3}^{\circ}$ can be defined as follows
\[ \Psi_{3} = \Phi \circ \Psi^\prime,\quad \text{and}\quad \Phi_{3} =  \Phi^\prime \circ \Psi. \]
The key point of the invariance of the $\beta_{\mathcal{F}}$-cycles is the following (technical) lemma which is left as an exercise (or see \cite[Proposition 3.15]{Thesis1}).

\begin{lemma}\label{lemma:techthird}
Given a cone over a chain map $S$
\[ \Gamma = \text{Cone}(S: C \longrightarrow D)\]
and an inclusion in a deformation retract (resp. a strong deformation retract)
\[ f: C \to C^\prime\qquad \text{( resp. }g: C^\prime \to C \text{ ),} \]
denote by $F$ (resp. $G$) the quasi-isomorphism
\[ F: \Gamma  \longrightarrow  \textrm{Cone}(S \circ g: C^\prime \longrightarrow D)\qquad \text{(resp.}
\ G:   \textrm{Cone}(S \circ g: C^\prime \longrightarrow D) \longrightarrow \Gamma\text{ )}\]
induced by $f$ (resp. $g$). If $f(x) = x^\prime$ and $ g(x^\prime) = x$, then
\[ F(x \oplus 0) = x^\prime \oplus 0\quad\text{and}\quad G(x^\prime \oplus 0) = x \oplus 0\]
\end{lemma}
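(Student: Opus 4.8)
The plan is to strip the statement down to pure homological algebra: make the maps $F$ and $G$ of Bar-Natan's cone lemma (\cite[Lemma 4.5]{BarNatan05cob}) completely explicit in coordinates adapted to the two mapping-cone decompositions, and then evaluate them on a chain of the special form $x\oplus 0$. First I would fix conventions, writing $\Gamma = \mathrm{Cone}(S\colon C\to D) = C\oplus D$ (with the usual homological shift on the $D$-summand) equipped with the differential $\left(\begin{smallmatrix} d_C & 0 \\ S & -d_D\end{smallmatrix}\right)$, and likewise $\mathrm{Cone}(S\circ g\colon C'\to D) = C'\oplus D$ with differential $\left(\begin{smallmatrix} d_{C'} & 0 \\ S\circ g & -d_D\end{smallmatrix}\right)$. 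Recall (\cite[Definition 4.3]{BarNatan05cob}) that the hypotheses on $f\colon C\to C'$ and $g\colon C'\to C$ provide a homotopy $h$ on $C'$ with
\[ g\circ f = \mathrm{id}_C, \qquad f\circ g - \mathrm{id}_{C'} = d_{C'}\circ h + h\circ d_{C'}, \qquad g\circ h = 0 \]
(the remaining ``strong'' conditions $h\circ f = 0$ and $h^2 = 0$ play no role here).

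Next I would record that, in these coordinates, the quasi-isomorphisms of \cite[Lemma 4.5]{BarNatan05cob} induced by $f$ and $g$ may be taken to be the block-diagonal maps
\[ F = \begin{pmatrix} f & 0 \\ 0 & \mathrm{id}_D \end{pmatrix}\colon \Gamma \longrightarrow \mathrm{Cone}(S\circ g), \qquad G = \begin{pmatrix} g & 0 \\ 0 & \mathrm{id}_D \end{pmatrix}\colon \mathrm{Cone}(S\circ g)\longrightarrow \Gamma. \]
The verification that these are as claimed is short: $F$ is a chain map because $f$ is and because $(S\circ g)\circ f = S\circ(g\circ f) = S$; $G$ is a chain map because $g$ is (the off-diagonal entry $S\circ g$ matches automatically). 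A direct matrix computation gives $G\circ F = (g\circ f)\oplus\mathrm{id}_D = \mathrm{id}_\Gamma$ and $F\circ G = (f\circ g)\oplus\mathrm{id}_D$, and the block-diagonal map $h\oplus 0$ is a chain homotopy $F\circ G \simeq \mathrm{id}$: its graded commutator with the cone differential produces $(f\circ g - \mathrm{id}_{C'})$ in the top-left corner and $(S\circ g)\circ h$ in the bottom-left corner, and the latter vanishes precisely by $g\circ h = 0$. Hence $F$ and $G$ are mutually inverse homotopy equivalences, consistent with the explicit description promised for the maps $\Phi$, $\Psi$, $\Phi'$, $\Psi'$ in the body of the paper.

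The conclusion is then immediate: the chain $x\oplus 0$ has trivial $D$-component, so $F(x\oplus 0) = f(x)\oplus 0 = x'\oplus 0$ and $G(x'\oplus 0) = g(x')\oplus 0 = x\oplus 0$, as required.

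The one point that genuinely requires care — and the reason the statement is not a tautology — is the identification of the maps $F$, $G$ that actually occur in the main text (obtained by composing the homotopy equivalences of several coherent $R_2$ moves with the identification of the two cones coming from the categorified Kauffman trick) with the block-diagonal maps above. I would therefore track the homological and quantum degree shifts and the sign conventions of \cite[Section 4]{BarNatan05cob} carefully enough to confirm that, in each intermediate step, the ``$D$-summand'' ($= C_{\mathcal{F}}(D_1)$ in the application) is carried along by the identity and that the cone decomposition is respected; once this bookkeeping is in place no further input is needed. I would also note, as a sanity check, that the variant maps obtained by applying the categorified Kauffman trick directly (cf.\ the remark preceding the lemma) have the same block-diagonal shape on the relevant summand, so the same argument applies to them verbatim.
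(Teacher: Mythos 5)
The paper itself does not prove this lemma (it is ``left as an exercise'' with a pointer to the author's thesis), so your proposal can only be judged on its own terms. As a proof of the statement as literally written it is essentially correct: once $F$ and $G$ are the block-diagonal maps, the conclusion is an immediate evaluation, and your verification that the block-diagonal maps are chain maps and mutually inverse homotopy equivalences is the right way to justify that they are ``the'' maps of \cite[Lemma 4.5]{BarNatan05cob}. Two caveats. First, a minor one: the side condition you invoke, $g\circ h=0$, is not the one Bar-Natan's Definition 4.3 supplies --- his condition is $h\circ(\text{inclusion})=0$, i.e.\ $h\circ f=0$ in your notation, which is exactly the condition you declare irrelevant. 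This only affects your (optional) verification that $F\circ G\simeq \mathrm{id}$, and can be repaired by replacing $h$ with $h-f\circ g\circ h$, but as written the homotopy computation uses a hypothesis you do not actually have.

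The second caveat is the substantive one, and it is precisely the point you defer to ``bookkeeping.'' The chain-map property of the block-diagonal $F$ rests entirely on $g\circ f=\mathrm{id}_C$ holding \emph{on the nose}. That is consistent with the lemma's wording ($f$ an inclusion, $g$ the associated retraction), but in the application in the text the roles are reversed: $f\colon C_{\mathcal{F}}(D_0)\to C_{\mathcal{F}}(D'')$ is the $R_2$ \emph{retraction} and $g$ the inclusion, so only $f\circ g=\mathrm{id}_{C'}$ is exact while $g\circ f=\mathrm{id}_C-(dh+hd)$. In that situation $\left(\begin{smallmatrix} f & 0\\ 0 & \mathrm{id}\end{smallmatrix}\right)$ is \emph{not} a chain map, and the map produced by \cite[Lemma 4.5]{BarNatan05cob} is $\left(\begin{smallmatrix} f & 0\\ \pm S\circ h & \mathrm{id}\end{smallmatrix}\right)$; the conclusion $F(x\oplus 0)=x'\oplus 0$ then requires the additional verification $S(h(x))=0$. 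This does hold in the application (the $R_2$ homotopy vanishes on the summand carrying the oriented resolution, which is where $x=\beta_{\mathcal{F}}$ lives), but it is an extra check on the explicit form of $h$, not mere tracking of degree shifts and signs. You should either state the exactness hypothesis $g\circ f=\mathrm{id}_C$ explicitly as part of the lemma, or add the off-diagonal term and the vanishing argument for $S\circ h$ on the relevant cycle.
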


As a consequence of the previous lemma we obtain the following proposition.

\begin{proposition}\label{proposition:betathird}
Let $D$ and $D^\prime$ be two oriented link diagrams related by a coherent third Reidemeister move. Then
\begin{equation}
\tag{R3c}
\Psi_3 (\beta_{\mathcal{F}}(D,R)) = \beta_{\mathcal{F}}(D^\prime,R)\quad\text{and}\quad \Phi_3 (\beta_{\mathcal{F}}(D^\prime,R)) = \beta_{\mathcal{F}}(D,R).
\label{eq:invarianceR3c}
\end{equation}
\end{proposition}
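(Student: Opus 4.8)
The plan is to derive the statement from the already-established invariance of the $\beta_{\mathcal{F}}$-cycles under coherent $R_{2}$ moves (Proposition \ref{proposition:secondcoerbeta}), by transporting the $\beta$-cycles through the cone decompositions described above, with Lemma \ref{lemma:techthird} serving as the bridge. It suffices to treat the $R_{3}^{\circ}$ move: a general braid-like $R_{3}$ is a composition of an $R_{3}^{\circ}$ move with coherent $R_{2}$ moves (\cite[Lemma 2.6]{Polyak10}), and the $\beta_{\mathcal{F}}$-cycles are preserved by the latter, so the general case follows immediately.

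\emph{Step 1 (locating the $\beta$-cycle in the cone).} Since an $R_{3}^{\circ}$ move is braid-like, the two strands meeting at the highlighted crossing of Figure \ref{Fig:generatorsR3} are coherently oriented, so the oriented resolution of $D$ smooths that crossing coherently; assume this is the $0$-smoothing (the $1$-smoothing case is analogous and simpler). Following the marked points and nesting numbers through this smoothing --- which alters neither the circles carrying the marked points nor the parities of the relevant nesting numbers --- shows that, under the identification $C_{\mathcal{F}}(D)=\mathrm{Cone}(\mathbf{S}: C_{\mathcal{F}}(D_{0})\to C_{\mathcal{F}}(D_{1}))$, one has $\beta_{\mathcal{F}}(D)=\beta_{\mathcal{F}}(D_{0})\oplus 0$; likewise $\beta_{\mathcal{F}}(D^{\prime})=\beta_{\mathcal{F}}(D^{\prime}_{0})\oplus 0$ in $C_{\mathcal{F}}(D^{\prime})=\mathrm{Cone}(\mathbf{S}: C_{\mathcal{F}}(D^{\prime}_{0})\to C_{\mathcal{F}}(D_{1}))$ (recall the $1$-smoothing is shared, $D_{1}=D^{\prime}_{1}$).

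\emph{Step 2 (transport through $D^{\prime\prime}$).} The $R_{2}$ moves relating $D_{0}$ and $D^{\prime}_{0}$ to $D^{\prime\prime}$ are coherent, so by Proposition \ref{proposition:secondcoerbeta} the induced maps $f: C_{\mathcal{F}}(D_{0})\to C_{\mathcal{F}}(D^{\prime\prime})$, $f^{\prime}: C_{\mathcal{F}}(D^{\prime}_{0})\to C_{\mathcal{F}}(D^{\prime\prime})$ and their up-to-homotopy inverses $g$, $g^{\prime}$ all preserve the $\beta_{\mathcal{F}}$-cycles: $f(\beta_{\mathcal{F}}(D_{0}))=\beta_{\mathcal{F}}(D^{\prime\prime})$, $g(\beta_{\mathcal{F}}(D^{\prime\prime}))=\beta_{\mathcal{F}}(D_{0})$, and the primed analogues. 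Lemma \ref{lemma:techthird}, applied with $S=\mathbf{S}$ and $x=\beta_{\mathcal{F}}(D_{0})$, $x^{\prime}=\beta_{\mathcal{F}}(D^{\prime\prime})$ (resp. the primed data), then gives $\Phi(\beta_{\mathcal{F}}(D_{0})\oplus 0)=\beta_{\mathcal{F}}(D^{\prime\prime})\oplus 0$, $\Psi(\beta_{\mathcal{F}}(D^{\prime\prime})\oplus 0)=\beta_{\mathcal{F}}(D_{0})\oplus 0$, together with the primed versions for $\Phi^{\prime}$, $\Psi^{\prime}$. Since the identification of $\mathrm{Cone}(\mathbf{S}\circ g)$ with $\mathrm{Cone}(\mathbf{S}\circ g^{\prime})$ is the identity on each direct summand, it sends $\beta_{\mathcal{F}}(D^{\prime\prime})\oplus 0$ to $\beta_{\mathcal{F}}(D^{\prime\prime})\oplus 0$. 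Chasing $\beta_{\mathcal{F}}(D)$ through $\Psi_{3}=\Phi\circ\Psi^{\prime}$ and $\beta_{\mathcal{F}}(D^{\prime})$ through $\Phi_{3}=\Phi^{\prime}\circ\Psi$, using Step 1 at both ends, yields $\Psi_{3}(\beta_{\mathcal{F}}(D,R))=\beta_{\mathcal{F}}(D^{\prime},R)$ and $\Phi_{3}(\beta_{\mathcal{F}}(D^{\prime},R))=\beta_{\mathcal{F}}(D,R)$.

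\emph{Main obstacle.} The only step that is not purely formal is Step 1: checking that the cone presentation coming from the categorified Kauffman trick really carries $\beta_{\mathcal{F}}(D)$ to $\beta_{\mathcal{F}}(D_{0})\oplus 0$, and that the identification of the two cones over $\mathbf{S}\circ g$ and $\mathbf{S}\circ g^{\prime}$ is compatible with the element $\beta_{\mathcal{F}}(D^{\prime\prime})\oplus 0$ --- essentially a bookkeeping exercise with nesting numbers once the structural maps are in hand. Everything else is a mechanical combination of Lemma \ref{lemma:techthird} and Proposition \ref{proposition:secondcoerbeta}. (Alternatively, as in the remark following Lemma \ref{lemma:techthird}, one can run the same argument with the maps defined directly via the categorified Kauffman trick.)
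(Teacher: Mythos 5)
Your proposal is correct and follows exactly the route the paper takes: the paper's proof simply states that the claim is an immediate consequence of Lemma \ref{lemma:techthird} and Proposition \ref{proposition:secondcoerbeta}, and your Steps 1--2 spell out precisely that deduction (locating $\beta_{\mathcal{F}}$ in the cone summand corresponding to the oriented resolution, transporting it through $D^{\prime\prime}$ via the coherent $R_2$ maps, and composing). The extra care you take over which smoothing of the highlighted crossing is the oriented one is a welcome detail the paper leaves implicit.
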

\begin{proof}
The claim is an immediate consequence of Lemma \ref{lemma:techthird} and Proposition \ref{proposition:secondcoerbeta}.
\end{proof}

\subsection{The $\beta_{\mathcal{F}}$-invariants}\label{Subs:betaFinvariants}

Recall that, as said in the introduction, the map associated to a Markov move is the map associated to the corresponding Reidemeister move on the braid closure. Now we can prove Theorem \ref{theorem:main1}.

\begin{proof}[of Theorem \ref{theorem:main1}]
Every sequence of transverse Markov moves translates into a sequence of positive first Reidemeister moves, and braid-like second and third Reidemeister moves. A negative stabilization translates into a negative first Reidemeister move. The theorem follows immediately from Propositions \ref{proposition:firstbeta}, \ref{proposition:secondcoerbeta} and \ref{proposition:betathird}.
\end{proof}

\begin{definition}
Given a braid diagram $B$, the \emph{$\beta_{\mathcal{F}}$-invariants associated to $B$} are the cycles $\beta_{\mathcal{F}}(B)= \beta_{\mathcal{F}}(\widehat{B},R)$ and $\overline{\beta}_{\mathcal{F}}(B) = \overline{\beta}_{\mathcal{F}}(\widehat{B},R)$.
\end{definition}

\begin{remark}
Since the $\beta_{\mathcal{F}}$-cycles are defined for all oriented link diagrams, one may wonder if the $\beta_{\mathcal{F}}$-cycles define invariants for other representations of transverse links. For example, one may ask if the $\beta_{\mathcal{F}}$-cycles define invariants for transverse front projections (see \cite{Etnyre05} for a definition). However, this is not true. The reader may consult \cite[Chapter 4, Subsection 2.3]{Thesis1}.
\end{remark}

Let $B$ and $B^\prime$ be two braids representing the transverse links $T$ and $T^\prime$, respectively. Given a sequence $\Sigma$ of Markov (resp. Reidemeister) moves from $B$ to $B^\prime$ (resp. from $\widehat{B}$ to $\widehat{B^\prime}$), denote by
\[\Phi_{\Sigma}: C^\bullet_{\mathcal{F}}(\widehat{B},R) \longrightarrow C^\bullet_{\mathcal{F}}(\widehat{B^\prime},R)\]
the map associated to $\Sigma$. If $B$ and $B^\prime$ represent the same transverse link, then $\Sigma$ can be taken to be a sequence of transverse Markov moves. It follows that $\Phi_{\Sigma}$ sends the pair $(\beta_{\mathcal{F}}(B), \overline{\beta}_{\mathcal{F}}(B))$ to the pair $(\beta_{\mathcal{F}}(B^\prime), \overline{\beta}_{\mathcal{F}}(B^\prime))$.

\begin{remark}
Not all the sequences of Reidemeister moves between two braid closures can be obtained as composition of Markov moves. In particular, the set of maps associated to sequences of Markov moves is \textit{a priori} different from the set of maps associated to sequences of Reidemeister moves.
\end{remark}

Suppose that $T$ and $T^\prime$ are two distinct transverse knots. We will say that $T$ and $T^\prime$ are \emph{weakly} (resp. \emph{strongly}) \emph{distinguished} by the $\beta_{\mathcal{F}}$-invariants if it does \underline{not} exist a sequence $\Sigma$ of Markov (resp. Reidemeister) moves such that 
\[\Phi_{\Sigma}(\beta_{\mathcal{F}}(B)) = \beta_{\mathcal{F}}(B^\prime),\quad \text{and}\quad \Phi_{\Sigma}(\overline{\beta}_{\mathcal{F}}(B)) = \overline{\beta}_{\mathcal{F}}(B^\prime).\]
If the elements of a non-simple pair are weakly (resp. strongly) distinguished by the $\beta_{\mathcal{F}}$-invariants we will say that the $\beta_{\mathcal{F}}$-invariants are \emph{weakly} (resp. \emph{strongly}) \emph{effective}.

\begin{remark}
We do not know whether the strong effectiveness implies the weak effectiveness or not. This is related to the study of the maps induced by the Markov moves in a Khovanov-type theory. It goes beyond the scope of the present paper to explore this subject.
\end{remark}

\begin{remark}
We expect the strong (resp. weak) effectiveness to depend on the Frobenius algebra and on the base ring $R$. Since we do not have the means to study the effectiveness of the $\beta_{\mathcal{F}}$-invariants in full generality, this subject will not be explored further in this paper. 
\end{remark}

In Theorem \ref{theorem:main1} it is stated that the $\beta_{\mathcal{F}}$-invariants are preserved by  the maps induced by a sequence of transverse Markov moves. Comparing it with Proposition \ref{prop:LipshitzNgSarkar} one may notice that the signs of the $\beta_{\mathcal{F}}$-invariants are also preserved. However, for each oriented link diagram $D$, the map $-Id_{C_{\mathcal{F}}^\bullet(D,R)}$ can be always realised as the map associated to a sequence of Reidemeister moves from $D$ to itself (see \cite{BarNatan05cob, Jacobsson04}). Moreover, with our choices for the maps associated to a generating set of Reidemeister moves, $-Id$ can be realised with a sequence of Markov moves (see Remark \ref{rem:destabilizzazione_neg}).
So, if there is a sequence $\Sigma$ of Markov (resp. Reidemeister) moves, from $B$ to $B^\prime$ (resp. from $\widehat{B}$ to $\widehat{B^\prime}$), such that
\[\Phi_\Sigma (\beta_{\mathcal{F}}(B^\prime,R)) = -\beta_{\mathcal{F}}(B^\prime,R),\quad \text{and}\quad \Phi_{\Sigma}( \overline{\beta}_{\mathcal{F}}(B^\prime,R)) = -\overline{\beta}_{\mathcal{F}}(B^\prime,R), \]
then there is also a sequence of Markov (resp. Reidemeister) moves $\Sigma^\prime$, from $B$ to $B^\prime$ (resp. from $\widehat{B}$ to $\widehat{B^\prime}$), such that
\[\Phi_{\Sigma^\prime} (\beta_{\mathcal{F}}(B^\prime)) = \beta_{\mathcal{F}}(B^\prime),\quad \text{and}\quad \Phi_{\Sigma^\prime}( \overline{\beta}_{\mathcal{F}}(B^\prime)) = \overline{\beta}_{\mathcal{F}}(B^\prime). \]

\begin{remark}
More in general, there is an action of the group of the chain homotopy equivalences induced by sequences of  Reidemeister (resp. Markov) moves from $\widehat{B}$ to itself, on $C_{\mathcal{F}}^\bullet (\widehat{B},R)$. Following \cite{Jacobsson04}, we call this group the \emph{monodromy group} (resp. the \emph{braid monodromy group}) and denote it by $M_{\widehat{B}}$ (resp. $BM_{B}$). It has been shown in \cite{Jacobsson04} that the monodromy group contains more than just the identity and its opposite. Let $\Sigma$ be a sequence of Reidemeister (resp. Markov) moves from $B$ to $B^\prime$. If $\Phi_{\Sigma}$ sends the $\beta_{\mathcal{F}}$-invariants of $B$ to a pair of elements in the $M_{\widehat{B^\prime}}$-orbit (resp. $BM_{B^\prime}$-orbit) of the $\beta_{\mathcal{F}}$-invariants of $B^\prime$, then the corresponding transverse links cannot be strongly (resp. weakly) distinguished by the $\beta_{\mathcal{F}}$-invariants.
\end{remark}

At this point the following question arises naturally; how to use the $\beta_{\mathcal{F}}$-invariants to distinguish two transverse links? In other words, how can we prove that two cycles, belonging to different chain complexes, are not mapped one onto the other by a given set of chain homotopy equivalences? An approach is to look at their homology classes. For example, if one of the cycles has trivial homology class while the other does not, then the two cycles cannot be mapped one onto the other. Another approach is to use some extra structure on the complex (e.g. a filtration) which is preserved by the given set of chain homotopies. The first approach was originally used by Plamenevskaya, while the latter approach was used by Lipshitz, Ng and Sarkar. In the next section, we shall make use of the $\mathbb{F}[U]$-module structure of Bar-Natan's homology  to extract some information from the $\beta_{BN}$-invariants. 

To conclude this section we shall prove a ``uniqueness'' result for the $\beta_{\mathcal{F}}$-invariants (i.e. Theorem \ref{theorem:main2}). 

\subsection{A uniqueness property}

Assume $R = R_{\mathcal{F}}$ to be a unique factorization domain (UFD). Even though not every Frobenius algebra which belongs to the family described in Subsection \ref{Sec:exFA} satisfies this hypothesis, a large class of them does. In particular, this hypothesis is satisfied by the algebras $BIG$, $BN$, $VT$, $Kh$, $OLee$ and $TLee$.

Suppose that we have a way to assign to each oriented link diagram $D$ an enhanced state $x(D)\in C_{\mathcal{F}}^{\bullet}(D,R)$, whose underlying resolution is the oriented one. Recall that given a circle $\gamma$ in the oriented resolution of $D$, $b_\gamma$ (resp. $\bar{b}_{\gamma}$) denotes the label of $\gamma$ in  $\beta_{\mathcal{F}}(D,R)$ (resp. $\overline{\beta}_{\mathcal{F}}(D,R)$).

\begin{lemma}\label{lemma:keylemmabetaunique}
Let $D$ be an oriented link diagram. Denote by \textbf{a} and \textbf{b} two unknotted arcs of $D$ as in Figure \ref{fig:relativeorientationaandb}, and by $D^\prime$ the link diagram obtained by performing a coherent second Reidemeister move along \textbf{a} and \textbf{b}. Finally, denote by $\gamma_{\text{\textbf{a}}}$ and $\gamma_{\text{\textbf{b}}}$ are the circles int the oriented resolution of $D$ containing \textbf{a} and \textbf{b}, respectively. Suppose that
\[ \Phi_2(x(D)) = x(D^\prime)\quad \text{and}\quad \Psi_{2}(x(D^\prime)) = x(D),\]
where $\Psi_{2}$ and $\Phi_{2}$ are the maps associated to the second Reidemeister move and its inverse, respectively. Then, the labels of the circles $\gamma_{\text{\textbf{a}}}$ and $\gamma_{\text{\textbf{b}}}$ in $x(D)$ are, respectively, ($R$-)multiples of either $b_{\gamma_{\text{\textbf{a}}}}$ and $b_{\gamma_{\text{\textbf{b}}}}$, or of $\bar{b}_{\gamma_{\text{\textbf{a}}}}$ and $\bar{b}_{\gamma_{\text{\textbf{b}}}}$.
\end{lemma}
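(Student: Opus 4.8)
The plan is to compute explicitly the action of the composite $\Psi_2 \circ \Phi_2$, or rather to extract from the hypotheses $\Phi_2(x(D)) = x(D^\prime)$ and $\Psi_2(x(D^\prime)) = x(D)$ enough algebraic constraints on the two labels $\alpha_{\gamma_{\text{\textbf{a}}}}$ and $\alpha_{\gamma_{\text{\textbf{b}}}}$ of $x(D)$ to force them into the desired form. First I would set up notation: write $x(D) = \big(\bigotimes_{\gamma \in \underline{r} \setminus \{\gamma_{\text{\textbf{a}}},\gamma_{\text{\textbf{b}}}\}} \alpha_\gamma\big) \otimes \alpha_{\gamma_{\text{\textbf{a}}}} \otimes \alpha_{\gamma_{\text{\textbf{b}}}}$ (using that, by coherence of the $R_2$ move and the Jordan curve theorem, $\gamma_{\text{\textbf{a}}} \ne \gamma_{\text{\textbf{b}}}$, exactly as in the proof of Proposition \ref{proposition:secondcoerbeta}), and similarly analyze the oriented resolution of $D^\prime$, which is identified with that of $D^{\prime\prime}_{10}$.

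The key computation is the following. By the definition of $\Psi_2$, applying it to $x(D)$ produces a term in the $C^\bullet_{\mathcal{F}}(D^{\prime\prime}_{10})(-1)$ summand equal to $x(D)$ itself, plus a term in the $C^\bullet_{\mathcal{F}}(D^{\prime\prime}_{01})(-1)$ summand of the form $\mathbf{S}(x(D), \text{\textbf{e}}) \otimes \iota(1_R)$, where the saddle $\mathbf{S}$ along $\text{\textbf{e}}$ merges $\gamma_{\text{\textbf{a}}}$ and $\gamma_{\text{\textbf{b}}}$ into one circle, so this term is $\big(\bigotimes_{\gamma \ne \gamma_{\text{\textbf{a}}},\gamma_{\text{\textbf{b}}}} \alpha_\gamma\big) \otimes m_{\mathcal{F}}(\alpha_{\gamma_{\text{\textbf{a}}}}, \alpha_{\gamma_{\text{\textbf{b}}}}) \otimes \iota(1_R)$. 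Since $x(D^\prime)$ is by hypothesis an enhanced state supported on the oriented resolution, which lives entirely in the $C^\bullet_{\mathcal{F}}(D^{\prime\prime}_{10})(-1)$ summand, the equality $\Psi_2(x(D)) = x(D^\prime)$ forces both that $x(D) = x(D^\prime)$ under the identification $D \cong D^{\prime\prime}_{10}$, and crucially that
\[ m_{\mathcal{F}}(\alpha_{\gamma_{\text{\textbf{a}}}}, \alpha_{\gamma_{\text{\textbf{b}}}}) = 0. \]
Now I would exploit this vanishing. Writing $\alpha_{\gamma_{\text{\textbf{a}}}} = a_1 x_\circ + a_2 x_\bullet$ and $\alpha_{\gamma_{\text{\textbf{b}}}} = b_1 x_\circ + b_2 x_\bullet$ in the basis $\{x_\circ, x_\bullet\}$ of $A_{\mathcal{F}}$, and using the multiplication rules \eqref{eq:productandcirclebullet1} and \eqref{eq:productandcirclebullet2}, the condition $m_{\mathcal{F}}(\alpha_{\gamma_{\text{\textbf{a}}}}, \alpha_{\gamma_{\text{\textbf{b}}}}) = 0$ becomes $-(x_1 - x_2) a_1 b_1 x_\circ + (x_1 - x_2) a_2 b_2 x_\bullet = 0$, i.e. $(x_1-x_2) a_1 b_1 = 0$ and $(x_1-x_2) a_2 b_2 = 0$ in $R$. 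Here I need to separate the cases. If $x_1 \ne x_2$ (so $x_1 - x_2 \ne 0$), then since $R$ is a UFD, hence a domain, we get $a_1 b_1 = 0$ and $a_2 b_2 = 0$; a small case analysis on which of $a_1, a_2, b_1, b_2$ vanish, combined with the fact that $x(D)$ is nontrivial (so neither label is zero, as the other tensor factors are basis elements), yields exactly the two possibilities: either $(a_2 = b_2 = 0)$, giving $\alpha_{\gamma_{\text{\textbf{a}}}} = a_1 x_\circ$ and $\alpha_{\gamma_{\text{\textbf{b}}}} = b_1 x_\circ$, i.e. $R$-multiples of $b_{\gamma_{\text{\textbf{a}}}} = x_\circ$ and $b_{\gamma_{\text{\textbf{b}}}} = x_\circ$; or $(a_1 = b_1 = 0)$, giving $R$-multiples of $x_\bullet$. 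The point is that the coherent $R_2$ move makes $N(\gamma_{\text{\textbf{a}}})$ and $N(\gamma_{\text{\textbf{b}}})$ play symmetric roles, so "$b_{\gamma_{\text{\textbf{a}}}}$ and $b_{\gamma_{\text{\textbf{b}}}}$" means the pair is $(x_\circ, x_\circ)$ or (in the conjugate statement) $(x_\bullet, x_\bullet)$ consistently — which is what the two alternatives in the conclusion record, after noting Figure \ref{fig:relativeorientationaandb} pins down the relevant nesting configuration. The case $x_1 = x_2$ would need separate handling: then $x_1 - x_2 = 0$ and the saddle term vanishes automatically, giving no constraint from this move alone; one would then have to invoke the other direction $\Phi_2(x(D^\prime)) = x(D)$ and the arc \textbf{g}, but in fact in this degenerate case $A_{\mathcal{F}}$ has a different structure and one should check whether the lemma is even invoked there, or handle it by a limiting/specialization argument.

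The main obstacle I anticipate is twofold. First, bookkeeping: correctly tracking which summand of the mapping-cone decomposition \eqref{eq:identifyingcomplexesR2} each piece of $\Psi_2(x(D))$ and $\Phi_2(x(D^\prime))$ lands in, and making sure the identification of oriented resolutions is the one induced by coherence — this is routine but error-prone. Second, and more substantive, is extracting the conclusion cleanly from $m_{\mathcal{F}}(\alpha_{\gamma_{\text{\textbf{a}}}}, \alpha_{\gamma_{\text{\textbf{b}}}}) = 0$ when $R$ has zero divisors coming from the relations $p(U,V), q(U,V)$, or when $x_1 = x_2$; the UFD hypothesis is exactly what rules out the first pathology, but one should double check that $x_1 - x_2$ being a nonzerodivisor (not merely nonzero) is what is actually needed, and whether that holds in the relevant examples. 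I would also use the second hypothesis $\Phi_2(x(D^\prime)) = x(D)$ as a consistency check and, if needed, to kill the ambiguity between the "$b$" and "$\bar b$" alternatives in edge cases, via the saddle $\mathbf{S}(\,\cdot\,,\text{\textbf{g}})$ and Equation \eqref{eq:productandcirclebullet1}.
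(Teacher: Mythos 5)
The first half of your argument is exactly right and coincides with the paper's: since $x(D^\prime)$ is an enhanced state supported on the oriented resolution of $D^\prime$, the component of $\Psi_2(x(D))$ lying in the $C^{\bullet}_{\mathcal{F}}(D^{\prime\prime}_{01})$ summand must vanish, which forces $m_{\mathcal{F}}(\alpha_{\gamma_{\text{\textbf{a}}}},\alpha_{\gamma_{\text{\textbf{b}}}})=0$. The gap is in how you extract the conclusion from this. You expand the labels "in the basis $\{x_\circ,x_\bullet\}$ of $A_{\mathcal{F}}$", but $\{x_\circ,x_\bullet\}$ is \emph{not} an $R$-basis of $A_{\mathcal{F}}$ in general: since $x_\circ-x_\bullet=(x_2-x_1)1_{A_\mathcal{F}}$, the element $1_{A_\mathcal{F}}$ lies in the span of $x_\circ,x_\bullet$ only when $x_1-x_2$ is a unit. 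That holds for $TLee$ and $OLee$, but fails precisely in the cases the lemma is needed for: in $BN$ one has $x_\circ=X$, $x_\bullet=X-U$, whose span misses $1$, and in $Kh$ one even has $x_\circ=x_\bullet$. So a general label $a+bX$ admits no expression $a_1x_\circ+a_2x_\bullet$, and your case analysis does not cover all labels. (Relatedly, your remark that for $x_1=x_2$ "the saddle term vanishes automatically" is false: the saddle term is $m(\alpha_{\gamma_{\text{\textbf{a}}}},\alpha_{\gamma_{\text{\textbf{b}}}})$ for arbitrary labels, e.g.\ $m(1,1)=1\neq 0$; what vanishes is only $m(x_\circ,x_\circ)$.)

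The correct way to finish, and the one the paper takes, is to work in $R[X]$ rather than in a putative eigenbasis: the condition $m(a,b)=0$ means that lifts $\tilde a,\tilde b\in R[X]$ of the two labels satisfy $(X-x_1)(X-x_2)\mid \tilde a\tilde b$. Since $R[X]$ is a UFD and the monic linear polynomials $X-x_1$, $X-x_2$ are prime, while $\tilde a$ and $\tilde b$ are nonzero of degree at most $1$ (nonzero because $x(D)$ is a nontrivial enhanced state), each of the two prime factors (counted with multiplicity) must divide one of $\tilde a,\tilde b$, and neither can absorb both. Hence $\{\tilde a,\tilde b\}=\{r(X-x_1),r^\prime(X-x_2)\}$ for some $r,r^\prime\in R$, i.e.\ one label is an $R$-multiple of $x_\circ$ and the other of $x_\bullet$; combined with $b_{\gamma_{\text{\textbf{b}}}}=\bar b_{\gamma_{\text{\textbf{a}}}}$ (coherence of the move) this is exactly the claim. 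Note that this argument treats $x_1=x_2$ uniformly, so no separate degenerate case, limiting argument, or appeal to the other hypothesis $\Phi_2(x(D^\prime))=x(D)$ is needed.
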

\begin{figure}
\centering
\begin{tikzpicture}[scale = .35]
\draw[dashed] (0,0) circle (1.415);
\draw[thick, ->] (-1,-1) .. controls +(.5,.5) and +(.5,-.5) .. (-1,1);
\draw[thick,  ->] (1,-1) .. controls +(-.5,.5) and +(-.5,-.5) .. (1,1);
\draw[fill, white] (-1.2,-.5) rectangle (-1.6,.5);
\node at (-1.415,0) {\textbf{a}};
\draw[fill, white] (1.2,-.5) rectangle (1.6,.5);
\node at (1.415,0) {\textbf{b}};
\end{tikzpicture}
\caption{Two coherently oriented arcs.}\label{fig:relativeorientationaandb}
\end{figure}
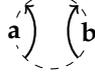
\begin{proof}
Denote by $\underline{r}$ and $\underline{r}^\prime$ the oriented resolutions of $D$ and $D^\prime$, respectively. Finally, denote by $\underline{s}$ the resolution of $D^\prime$ where all crossings but the two added by the second Reidemeister move are resolved as in the oriented resolution.

Let $a$ and $b$ the labels of the circles $\gamma_{\text{\textbf{a}}}$ and $\gamma_{\text{\textbf{b}}}$ in $x(D)$. 
Since $\Psi_2(x(D)) = x(D^\prime) \in A_{\underline{r}^\prime}$, it is immediate that $m(a,b) = 0$. Thus, $a$ and $b$ must be zero divisors in $A$. It follows (since $R[X]$ is an UFD) that $a$ and $b$ belong to either the ideal generated by $x_\circ$ or to the ideal generated by $x_\bullet$ in $A_{BN}$. Moreover, the two labels should belong to different ideals. Since $b_{\gamma_{\text{\textbf{a}}}}$ is either $x_\circ$ or $x_\bullet$ and $b_{\gamma_{\text{\textbf{b}}}} = \bar{b}_{\gamma_{\text{\textbf{a}}}}$ the claim follows.
\end{proof}

\begin{lemma}\label{lemma:uniquebeta}
Let $D$ be a non-split oriented link diagram (i.e.\: $D$ is connected as a planar graph). If $x(D)$ is invariant under coherent Reidemeister moves of the second type, then either $x(D) = r\beta(D,R)$ or $x(D) = r\bar{\beta}(D,R)$, for some $r\in R$.
\end{lemma}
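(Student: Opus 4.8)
The plan is to leverage Lemma~\ref{lemma:keylemmabetaunique} by ``percolating'' the local conclusion around $\gamma_{\text{\textbf{a}}}$ and $\gamma_{\text{\textbf{b}}}$ to all circles of the oriented resolution $\underline{r}$ of $D$, using connectivity of $D$ as a planar graph. First I would fix notation: write $x(D) = \bigotimes_{\gamma\in\underline{r}} a_\gamma$ with each $a_\gamma\in A$. The goal is to show that, after possibly swapping $\beta\leftrightarrow\overline{\beta}$, each $a_\gamma$ is an $R$-multiple of $b_\gamma$, and moreover that the multiple $r_\gamma\in R$ is the \emph{same} for every $\gamma$; then $x(D) = r\,\beta(D,R)$ (or $r\,\overline{\beta}(D,R)$).

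The key step is the following local move. Take any two circles $\gamma_1,\gamma_2$ in $\underline{r}$ that are ``adjacent'' in the sense that there is an arc of $\mathbb{R}^2$ meeting $D$ only at its endpoints, one on $\gamma_1$ and one on $\gamma_2$, and coherently oriented as in Figure~\ref{fig:relativeorientationaandb}; performing a coherent $R_2$ move along such an arc and applying Lemma~\ref{lemma:keylemmabetaunique} (to the sub-configuration formed by those two strands) forces $a_{\gamma_1}$ and $a_{\gamma_2}$ to be $R$-multiples of $b_{\gamma_1},b_{\gamma_2}$ respectively, \emph{or} of $\bar b_{\gamma_1},\bar b_{\gamma_2}$ respectively — i.e.\ the ``choice'' of $\beta$ versus $\overline{\beta}$ is forced to be consistent across any adjacent pair. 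Since $D$ is connected as a planar graph, its oriented resolution is connected in this adjacency sense (every two circles are joined by a chain of such arcs — one can always find a coherently oriented transverse arc between neighbouring circles because adjacent circles in the oriented resolution carry opposite nesting parities, hence opposite-orientation boundaries locally, which is exactly the coherent configuration). Propagating the dichotomy along a spanning tree of this adjacency graph, and using that the two ideals $(x_\circ)$ and $(x_\bullet)$ are distinct (as $R[X]$ is a UFD, a hypothesis we may invoke), pins down a single global choice: either $a_\gamma\in(b_\gamma)$ for all $\gamma$, or $a_\gamma\in(\bar b_\gamma)$ for all $\gamma$.

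It remains to see that the scalars agree. Having written $a_\gamma = r_\gamma b_\gamma$ (say, in the $\beta$ case), I would revisit the adjacency move once more: the explicit formulas for $\Psi_2$ and $\Phi_2$ in Subsection~\ref{sec:BetainvII} involve $\mathbf{S}(x(D),\textbf{g})$, $m_{\mathcal{F}}$, and $\iota_{\mathcal{F}}(1_R)$, and comparing the label of the merged/split circle on both sides of $\Phi_2(x(D))=x(D')$, $\Psi_2(x(D'))=x(D)$ shows that the scalar attached to $\gamma_1$ must equal the scalar attached to $\gamma_2$ (the $\iota_{\mathcal{F}}(1_R)$ appearing in $\Psi_2$ is the ``new'' circle's label with coefficient $1$, which forces no extra factor to be absorbed). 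One can also phrase this more cheaply: the maps $\Psi_2,\Phi_2$ are $R$-linear, and $x(D) = \Phi_2\Psi_2(x(D))$ while $\beta(D,R)=\Phi_2\Psi_2(\beta(D,R))$; expanding $x(D)=\sum_\gamma r_\gamma(\text{stuff})$ and using that the $b_\gamma$-configuration is an eigenvector-like fixed chain forces all $r_\gamma$ equal. Either way, connectivity is what converts ``locally a multiple'' into ``globally one and the same multiple''.

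The main obstacle I anticipate is precisely the bookkeeping in this last step — verifying that the scalar is genuinely transported unchanged across a coherent $R_2$ move, rather than being multiplied by some unit or zero-divisor coming from $m_{\mathcal{F}}$, $\Delta_{\mathcal{F}}$ or the degree shifts hidden in the identification \eqref{eq:identifyingcomplexesR2}. One has to check that the relevant component of $\Psi_2$ restricted to the oriented-resolution summand is (up to identification) the identity on the unchanged circles and sends $b_{\gamma_1}\otimes b_{\gamma_2}$-type data to $b_{\gamma_1}\otimes b_{\gamma_2}\otimes\iota(1_R)$ with coefficient $1$, so no scalar is created or destroyed; combined with the vanishing $m_{\mathcal{F}}(b_{\gamma_{\textbf a}},b_{\gamma_{\textbf b}})=0$ and $\mathbf{S}(\beta_{\mathcal{F}}(D,R),\textbf{g})=0$ already established in the proof of Proposition~\ref{proposition:secondcoerbeta}, this should close the argument. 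A secondary subtlety is ensuring that for a \emph{non-split} $D$ one really can realize the adjacency graph on circles of the oriented resolution as connected via \emph{coherent} $R_2$ moves; this uses the nesting-number parity fact (\cite[Corollary 2.5]{Rasmussen10}) that adjacent circles have opposite parities, hence locally opposite-oriented boundaries, which is exactly the coherent configuration of Figure~\ref{fig:relativeorientationaandb}.
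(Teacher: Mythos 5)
Your proposal is correct and follows essentially the same route as the paper: apply Lemma \ref{lemma:keylemmabetaunique} to each pair of circles of the oriented resolution that admit a coherent $R_2$ (equivalently, that share a crossing — the oriented smoothing makes the two nearby arcs braid-like, and such circles have opposite nesting parities), then propagate the $\beta$-versus-$\bar\beta$ dichotomy along the connected Seifert graph of the non-split diagram. The one place you overcomplicate matters is the final ``scalars agree'' step: it is not needed, and your proposed verification of it would not in fact pin the $r_\gamma$ down individually. Since $x(D)$ is an enhanced state, i.e.\ an elementary tensor over $R$, writing $a_\gamma = r_\gamma b_\gamma$ immediately gives $x(D)=\bigotimes_\gamma r_\gamma b_\gamma = \bigl(\prod_\gamma r_\gamma\bigr)\beta_{\mathcal F}(D,R)$, so the single scalar $r$ in the statement is just the product of the local ones and no transport argument across the $R_2$ move is required.
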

\begin{proof}
If two circles in the oriented resolution share a crossing it is possible to perform a coherent $R_2$ involving those circles. 
Thus, by Lemma \ref{lemma:keylemmabetaunique} each pair of circles sharing a crossing should be labeled either as in $\beta$ or $\bar{\beta}$, up to the multiplication by an element of $R$. Since $D$ has only one split component, the Seifert graph is connected. So, if the label of a single circle is chosen, all the other labels are determined up to multiplication by an element of $R$, and the claim follows.
\end{proof}

Let $D$ be an oriented link diagram, and let $D_1, ..., D_k$ be its split components (i.e. the connected components of $D$ seen as a $4$-valent graph). We will say that $D_{i}$ and $D_{j}$ have \emph{compatible orientations} if there exists ball $\mathbb{B}$ intersecting $D$ in two unknotted arcs \textbf{a} and \textbf{b}, with \textbf{a} belonging to $D_i$ and \textbf{b} belonging to $D_j$, which is ambient isotopic in $\mathbb{R}^{2}$ to the ball in Figure \ref{fig:relativeorientationaandb}.

The diagram $D$ is said to be \emph{coherently oriented} if for each pair of split components of $D$, say $D_1$ and $D_2$, there exists a sequence $D_1 = D_{i_1},...,D_{i_{k}}= D_2$ of split components of $D$ such that the components $D_{i_{j}}$ and $D_{i_{j+1}}$ have compatible orientations for each $j \in \{ 1, ..., k-1 \}$.

\begin{proposition}\label{proposition:uniquebeta}
Let $D$ be a coherently oriented link diagram. If $x(D)$ is invariant under coherent Reidemeister moves of the second type, then $x(D)$ is a ($R$-)multiple of either $ \beta_{\mathcal{F}}(D,R)$ or $\bar{\beta}_{\mathcal{F}}(D,R)$.
\end{proposition}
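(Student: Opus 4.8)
The plan is to reduce the general coherently oriented case to the non-split case handled by Lemma \ref{lemma:uniquebeta}, together with the behaviour of the Khovanov-type complex under disjoint union (Remark \ref{rem:disjointunionandtensor}) and the local computation of Lemma \ref{lemma:keylemmabetaunique}. First I would write $D = D_1 \sqcup \dots \sqcup D_k$ as a disjoint union of its split components, so that by Remark \ref{rem:disjointunionandtensor} we have a (graded, resp. filtered) identification
\[ C^{\bullet}_{\mathcal{F}}(D,R) \simeq C^{\bullet}_{\mathcal{F}}(D_1,R) \otimes_R \dots \otimes_R C^{\bullet}_{\mathcal{F}}(D_k,R), \]
under which the oriented resolution of $D$ corresponds to the tensor product of the oriented resolutions of the $D_i$, and $\beta_{\mathcal{F}}(D,R)$ corresponds to $\beta_{\mathcal{F}}(D_1,R)\otimes\dots\otimes\beta_{\mathcal{F}}(D_k,R)$ provided the nesting numbers agree; here one must check that the nesting number of a circle $\gamma$ in a split component $D_i$, computed inside $D$, equals its nesting number computed inside $D_i$ up to a global shift on $D_i$ — since circles of $D_j$ for $j\neq i$ either enclose all of $D_i$ or none of it, the parity difference is the same for every circle of $D_i$, hence amounts at worst to globally swapping $\beta \leftrightarrow \overline\beta$ on that component. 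Because $x(D)$ is an enhanced state whose underlying resolution is the oriented one, it too factors as $x(D) = x_1 \otimes \dots \otimes x_k$ where $x_i$ is an enhanced state of $C^\bullet_{\mathcal{F}}(D_i,R)$ on the oriented resolution (after possibly absorbing a unit; strictly speaking $x(D)$ lies in the image of $A_{\underline r}$ which is itself a tensor product of the $A_{\underline{r}_i}$, so each tensor factor is well-defined up to the global scalar).

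Next I would argue that each $x_i$ is, up to an element of $R$, equal to either $\beta_{\mathcal{F}}(D_i,R)$ or $\overline{\beta}_{\mathcal{F}}(D_i,R)$. Any coherent $R_2$ move performed entirely within $D_i$ is in particular a coherent $R_2$ move on $D$, and under the tensor decomposition the induced map $\Phi_2$ acts as $(\text{that map on the }i\text{-th factor}) \otimes \mathrm{id}$ on the other factors; since $\Phi_2(x(D)) = x(D')$ and $\Psi_2(x(D'))=x(D)$ by hypothesis, and the other tensor factors are unchanged, the $i$-th factor $x_i$ is itself invariant under all coherent $R_2$ moves internal to $D_i$. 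As $D_i$ is non-split, Lemma \ref{lemma:uniquebeta} gives $x_i = r_i \beta_{\mathcal{F}}(D_i,R)$ or $x_i = r_i\overline{\beta}_{\mathcal{F}}(D_i,R)$ for some $r_i \in R$.

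The remaining point — and the one I expect to be the real content — is to show the choices made for the different components are \emph{consistent}: either every $x_i$ is a multiple of $\beta_{\mathcal{F}}(D_i,R)$, or every one is a multiple of $\overline{\beta}_{\mathcal{F}}(D_i,R)$, so that the tensor product recombines to a multiple of $\beta_{\mathcal{F}}(D,R)$ or of $\overline{\beta}_{\mathcal{F}}(D,R)$ and not some mixed state. This is where coherent orientedness of $D$ is used. If $D_i$ and $D_j$ have compatible orientations, there is a ball meeting $D$ in two arcs \textbf{a} $\subset D_i$, \textbf{b} $\subset D_j$ isotopic to the configuration of Figure \ref{fig:relativeorientationaandb}; performing a coherent $R_2$ along \textbf{a} and \textbf{b} is a legitimate move on $D$ (it merges circles from two different split components into one), so Lemma \ref{lemma:keylemmabetaunique} applies and forces the labels of $\gamma_{\text{\textbf{a}}}$ and $\gamma_{\text{\textbf{b}}}$ in $x(D)$ to be scalar multiples of $b_{\gamma_{\text{\textbf{a}}}}, b_{\gamma_{\text{\textbf{b}}}}$ \emph{simultaneously} (both from the ``$\beta$'' pattern) or of $\bar b_{\gamma_{\text{\textbf{a}}}}, \bar b_{\gamma_{\text{\textbf{b}}}}$ simultaneously. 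Combined with the previous paragraph — which says $x_i$ follows the $\beta$ pattern on \emph{all} of $D_i$ iff it does so on the single circle $\gamma_{\text{\textbf{a}}}$, and likewise for $D_j$ at $\gamma_{\text{\textbf{b}}}$ — this shows $D_i$ and $D_j$ make the same choice whenever they have compatible orientations. Since coherent orientedness means any two split components are linked by a chain of pairwise-compatibly-oriented components, propagating this equality along the chain shows all components make the same choice. Writing the common scalar as $r = \prod_i r_i \in R$ and reassembling via the tensor decomposition then gives $x(D) = r\,\beta_{\mathcal{F}}(D,R)$ or $x(D) = r\,\overline{\beta}_{\mathcal{F}}(D,R)$, which is the claim. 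The only mildly delicate bookkeeping is matching the nesting-number parities across the tensor decomposition and keeping track of the distinction between ``$x(D)$ restricted to a factor'' and ``$x(D)$ as a genuine elementary tensor'', but since all maps involved preserve the oriented resolution summand and act diagonally on tensor factors, no essential difficulty arises there.
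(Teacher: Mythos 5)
Your proposal is correct and follows essentially the same route as the paper: pin down the labels on each split component (via Lemma \ref{lemma:uniquebeta}, which rests on Lemma \ref{lemma:keylemmabetaunique}), then use the compatible orientations guaranteed by coherent orientedness to perform inter-component coherent $R_2$ moves and propagate the $\beta$-versus-$\bar\beta$ choice along a chain of split components. The paper's proof is just a terser version of yours, leaving the tensor-decomposition and nesting-number bookkeeping implicit.
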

\begin{proof}
Let $D$ be a coherently oriented diagram and $D_{1}$,...,$D_{k}$ be its split components. By Lemma \ref{lemma:keylemmabetaunique} the labels of $x(D)$ on the components of the oriented resolution of a split component are exactly as in $\beta_{\mathcal{F}}(D,R)$ or as in $\bar{\beta}_{\mathcal{F}}(D,R)$, up to multiplication by an element of $R$.
By the definition of coherently oriented link diagram, given two split component, say $D_{i}$ and $D_j$, there exists a sequence $D_1 = D_{i_1},...,D_{i_{k}}= D_2$ of split components of $D$ such that the components $D_{i_{j}}$ and $D_{i_{j+1}}$ have compatible orientations for each $j \in \{ 1, ..., k-1 \}$.
By definition of compatible orientation it is possible to perform a second type coherent Reidemeister move using an arc of $D_{i_j}$ and and arc of $D_{i_{j+1}}$. Thus, again by Lemma \ref{lemma:keylemmabetaunique}, if the labels of the circles corresponding to $D_{i_j}$ in $x(D)$ are as in $\beta_{\mathcal{F}}(D,R)$ (up to multiplication by an element of $R$), then also the labels of the circles corresponding to $D_{i_{j+1}}$ in $x(D)$ are as in $\beta_{\mathcal{F}}(D,R)$. Similarly, if the the labels of the circles corresponding to $D_{i_j}$ in $x(D)$ are as in $\bar{\beta}_{\mathcal{F}}(D,R)$, then also the labels of the circles corresponding to $D_{i_{j+1}}$ in $x(D)$ are as in $\bar{\beta}_{\mathcal{F}}(D,R)$. So, if the label of a circle $\gamma$ in $x(D)$ is a $R$-multiple of $b_{\gamma}$ (resp. $\bar{b}_{\gamma}$), then $x(D)$ is an $R$-multiple of $\beta_{\mathcal{F}}(D,R)$ (resp. $\bar{\beta}_{\mathcal{F}}(D,R)$).  
\end{proof}

\begin{proof}[of Theorem \ref{theorem:main2}]
All the braid closures are coherently oriented link diagrams. Theorem \ref{theorem:main2} now follows straightforwardly from Proposition \ref{proposition:uniquebeta} .
\end{proof}

\section{Specialising to Bar-Natan theory}\label{sec:betaBN}

In this section we specialise our construction to the case $\beta = \beta_{BN}$. First we explore the relationship between the $\beta$-invariants, the Plamenevskaya invariant and the LNS-invariants. In the second part of this section we extract new transverse invariants from the homology classes of the $\beta$-invariants. Furthermore, we provide some sufficient condition for these new invariants to be non-effective, and prove that also the vanishing of the Plamenvskaya invariant is non-effective for small knots.

\subsection{Relationship with other invariants}

Recall that the base ring for the Frobenius algebra $BN$ is $\mathbb{F}[U]$. Moreover, the resulting Khovanov-type homology is bi-graded, and the multiplication by $U$ lowers the quantum degree by $2$. It follows immediately from the definitions that the $\beta$-invariants are bi-homogeneous cycles of bi-degree $(0,sl(B))$.

Let us start by pointing out the relationship between Bar-Natan, Khovanov and Lee theories. This relationship follows directly from the definitions and can be condensed into the following exact sequences
\begin{equation}
0 \to C_{BN}^{\bullet,\bullet + 2}(D) \overset{ U\cdot}{\longrightarrow} C_{BN}^{\bullet,\bullet}(D) \overset{ \pi_{Kh}}{\longrightarrow} C_{Kh}^{\bullet,\bullet}(D) \to 0
\label{Eq:BNKh}
\end{equation}
\begin{equation}
0 \to C_{BN}^{\bullet}(D) \overset{ (U-1)\cdot}{\longrightarrow} C_{BN}^{\bullet}(D) \overset{ \pi_{TLee}}{\longrightarrow} C_{TLee}^{\bullet}(D) \to 0
\label{Eq:BNTLee}
\end{equation}
Moreover, it is also immediate from the definition of Plameneskaya and LNS invariants that
\[\pi_{TLee}(\overline{\beta}(B)) = \psi^{-}(B),\quad \pi_{TLee}(\beta(B)) = \psi^{+}(B),\]
\[\pi_{Kh}(\beta(B)) = \pi_{Kh}(\overline{\beta}(B)) = \psi(B).\]
Since the LNS invariants are representatives of the canonical generators of twisted Lee theory (cf. \cite{Rasmussen10, Mackaayturnervaz07}), their homology classes are linearly independent over $\mathbb{F}$. The following proposition follows immediately
\begin{proposition}
Given an oriented link diagram $D$, then $[\beta(D)]$ and $[\overline{\beta}(D)]$ generate a rank $2$ $\mathbb{F}[U]$-submodule of $H_{BN}^{\bullet,\bullet}(D)$. In particular, $[\beta(D)]$ and $[\overline{\beta}(D)]$ are always non-trivial and non-torsion.
\end{proposition}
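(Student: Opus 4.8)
The plan is to deduce the statement from the already-recorded facts about the projections $\pi_{TLee}$ and $\pi_{Kh}$, together with the two short exact sequences \eqref{Eq:BNKh} and \eqref{Eq:BNTLee}, and the linear independence of the canonical Lee generators. Everything hinges on transporting information from the (simpler) Lee homology back to Bar-Natan homology.

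First I would show non-triviality and non-torsion. Since $\pi_{TLee}(\beta(B)) = \psi^{+}(B)$ and $\pi_{TLee}(\overline{\beta}(B)) = \psi^{-}(B)$, and since the homology classes $[\psi^{+}(B)]$, $[\psi^{-}(B)]$ are the two canonical generators of twisted Lee homology and hence $\mathbb{F}$-linearly independent (in particular non-zero), the chain map $\pi_{TLee}$ sends $[\beta(B)]$ and $[\overline{\beta}(B)]$ to non-zero classes; therefore $[\beta(B)]$ and $[\overline{\beta}(B)]$ are themselves non-zero. For the non-torsion part, note that multiplication by $U$ on $H_{BN}$ corresponds, after applying $\pi_{TLee}$, to multiplication by $1$ on $H_{TLee}$ (since $\pi_{TLee}$ kills $U-1$); so if $U^{n}[\beta(B)] = 0$ in $H_{BN}$ then $0 = \pi_{TLee}(U^{n}[\beta(B)]) = [\psi^{+}(B)] \ne 0$, a contradiction, and similarly for $\overline{\beta}$. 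Hence neither class is $U$-torsion.

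Next I would establish that the submodule they generate has rank $2$, i.e. that $[\beta(B)]$ and $[\overline{\beta}(B)]$ are $\mathbb{F}[U]$-linearly independent. Suppose $P(U)[\beta(B)] + Q(U)[\overline{\beta}(B)] = 0$ in $H_{BN}$ with $P,Q \in \mathbb{F}[U]$ not both zero. Applying $\pi_{TLee}$ and using that it is $\mathbb{F}[U]$-linear with $U$ acting as $1$ after the quotient, we get $P(1)[\psi^{+}(B)] + Q(1)[\psi^{-}(B)] = 0$, hence $P(1) = Q(1) = 0$ by linear independence of the Lee generators. This is not yet a contradiction, so I would run the same argument with a $U$-power factored out: write $P = U^{a}P_{1}$, $Q = U^{b}Q_{1}$ with $P_{1}(0) \ne 0$ or $Q_{1}(0) \ne 0$ — actually the cleanest route is to observe that one may first divide the relation by the largest common power of $U$ (legitimate because $[\beta(B)],[\overline{\beta}(B)]$ are non-torsion, so $U$ acts injectively on the submodule they generate, or more safely because $H_{BN}$ is a module over the PID $\mathbb{F}[U]$ and we can pass to the localization / use that the quotient by torsion is free), reducing to the case where $P$ and $Q$ are not both divisible by $U$; then evaluate at $U=1$ via $\pi_{TLee}$ as before to force $P(1)=Q(1)=0$, and separately evaluate at $U=0$ via $\pi_{Kh}$ (using \eqref{Eq:BNKh}) which sends both $[\beta(B)]$ and $[\overline{\beta}(B)]$ to $[\psi(B)]$, giving $(P(0)+Q(0))[\psi(B)] = 0$. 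Combined with $U\nmid\gcd(P,Q)$, i.e. $P(0)\ne 0$ or $Q(0)\ne 0$, and $P(1)=Q(1)=0$, one extracts a contradiction after a short bookkeeping argument on degrees of $P,Q$.

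The main obstacle I expect is precisely this last rank-$2$ step: the two evaluation homomorphisms $\pi_{TLee}$ (which is ``evaluate $U$ at $1$'') and $\pi_{Kh}$ (which is ``evaluate $U$ at $0$'') each only see one value of $U$, and neither alone detects a polynomial relation; one must combine them correctly, and handle the possible $U$-divisibility of $P,Q$ carefully, using non-torsion-ness to justify cancelling powers of $U$. A clean alternative that avoids this juggling is to work directly at the chain level: since $\psi^{+}(B)$ and $\psi^{-}(B)$ are the Lee generators, their difference $\psi^{+}(B) - \psi^{-}(B)$ (after rescaling) is $U$-divisible in a controlled way, and one can identify $[\beta(B)]$ and $[\overline{\beta}(B)]$ inside a known summand of $H_{BN}(\widehat{B})$; but I would present the argument via $\pi_{TLee}$ and $\pi_{Kh}$ as above, since all the needed inputs have already been recorded in the excerpt.
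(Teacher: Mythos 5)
Your first step (non-triviality, and non-vanishing of $U^{n}[\beta]$, via $\pi_{TLee}$ and the linear independence of the canonical Lee classes $[\psi^{+}]$, $[\psi^{-}]$) is exactly the paper's argument and is fine. The gap is in the rank-$2$ step, and it is genuine: the two ``evaluations'' you propose cannot detect a general polynomial relation. Concretely, consider the hypothetical relation $(U-1)[\beta(D)]=0$, i.e.\ $P=U-1$, $Q=0$. Then $\pi_{TLee}$ yields $P(1)[\psi^{+}]=0$, which is vacuous, and $\pi_{Kh}$ yields $-[\psi]=0$, which is no contradiction because $[\psi]$ very frequently vanishes (the paper emphasises this immediately after the proposition). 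A nonzero polynomial is not determined by its values at $U=0$ and $U=1$, and the only extra datum available at $U=0$ is a class that may be zero, so no amount of ``bookkeeping on degrees of $P,Q$'' closes this. The fallback justification for cancelling powers of $U$ (``$U$ acts injectively on the submodule they generate'') is also circular: injectivity of $U$ on that submodule is essentially equivalent to what you are trying to prove, and knowing only that $U^{n}[\beta]\neq 0$ and $U^{n}[\overline{\beta}]\neq 0$ does not give it.

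The missing ingredient --- and the reason the paper can say the proposition ``follows immediately'' from the independence of $[\psi^{\pm}]$ --- is the bigrading. $H^{\bullet,\bullet}_{BN}(D)$ is a bigraded $\mathbb{F}[U]$-module, $U$ is homogeneous of quantum degree $-2$, and $[\beta(D)]$, $[\overline{\beta}(D)]$ are homogeneous of the \emph{same} bidegree. Hence any relation $P(U)[\beta]+Q(U)[\overline{\beta}]=0$ splits into its homogeneous components $U^{k}\bigl(p_{k}[\beta]+q_{k}[\overline{\beta}]\bigr)=0$, which lie in pairwise distinct quantum degrees and therefore must each vanish separately. Applying $\pi_{TLee}$ to the $k$-th component gives $p_{k}[\psi^{+}]+q_{k}[\psi^{-}]=0$, so $p_{k}=q_{k}=0$ for all $k$, i.e.\ $P=Q=0$. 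This single observation yields the rank-$2$ statement and subsumes your non-triviality and non-torsion arguments; without it, your proof does not go through.
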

This is in stark contrast to the behaviour of the homology class of $\psi$. In fact, $[\psi(B)]$ tends to vanish quite easily (cf. \cite[Proposition 3]{Plamenevskaya06}). However, the vanishing of $[\psi]$ can be detected directly from $[\beta]$. More precisely, we have the following result.
 
\begin{proposition}\label{prop:psiebeta}
Given a braid $B$, the following conditions are equivalent
\begin{enumerate}
\item $[\psi(B)] = 0$;
\item exists $x\in H_{BN}^{0,sl(B)+2}(\widehat{B})$ such that $U x = [\beta(B)]$;
\item exists $x\in H_{BN}^{0,sl(B)+2}(\widehat{B})$ such that $U x = [\overline{\beta}(B)]$;
\end{enumerate}
\end{proposition}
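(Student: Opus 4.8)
The strategy is to exploit the short exact sequence of complexes \eqref{Eq:BNKh}, namely $0 \to C_{BN}^{\bullet,\bullet+2}(\widehat{B}) \xrightarrow{U\cdot} C_{BN}^{\bullet,\bullet}(\widehat{B}) \xrightarrow{\pi_{Kh}} C_{Kh}^{\bullet,\bullet}(\widehat{B}) \to 0$, and its induced long exact sequence in homology. The key bookkeeping fact, already recorded in the excerpt, is that $\pi_{Kh}(\beta(B)) = \psi(B)$, together with the bi-degree computation $\deg \beta(B) = (0, sl(B))$ (and $\psi(B)$ has bi-degree $(0,sl(B))$ in Khovanov homology). So $[\beta(B)] \in H^{0,sl(B)}_{BN}(\widehat{B})$ maps to $[\psi(B)] \in H^{0,sl(B)}_{Kh}(\widehat{B})$ under the map induced by $\pi_{Kh}$.

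First I would write out the relevant portion of the long exact sequence in the internal degree $sl(B)$:
\[
H^{0,sl(B)+2}_{BN}(\widehat{B}) \xrightarrow{\;U\cdot\;} H^{0,sl(B)}_{BN}(\widehat{B}) \xrightarrow{\;(\pi_{Kh})_*\;} H^{0,sl(B)}_{Kh}(\widehat{B}) \xrightarrow{\;\partial\;} H^{1,sl(B)+2}_{BN}(\widehat{B}).
\]
From exactness at $H^{0,sl(B)}_{BN}(\widehat{B})$, the class $[\beta(B)]$ lies in the image of multiplication by $U$ if and only if $(\pi_{Kh})_*([\beta(B)]) = 0$, i.e.\ if and only if $[\psi(B)] = 0$. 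This gives the equivalence of (1) and (2) directly. The equivalence of (1) and (3) is identical, using $\pi_{Kh}(\overline{\beta}(B)) = \psi(B)$ in place of $\pi_{Kh}(\beta(B)) = \psi(B)$; everything else in the argument is symmetric in $\beta$ and $\overline{\beta}$, as the excerpt has already flagged.

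The only point requiring a little care — and the main (minor) obstacle — is checking that the "$x$" whose existence is asserted in (2) and (3) can be taken in the precise group $H^{0,sl(B)+2}_{BN}(\widehat{B})$ stated, rather than merely somewhere in $H^{0}_{BN}$. This is where I would invoke the bi-gradedness of \eqref{Eq:BNKh}: the map $U\cdot$ raises the quantum degree by $2$ and preserves homological degree, so a preimage of the bi-homogeneous class $[\beta(B)]$ of bi-degree $(0,sl(B))$ may be chosen bi-homogeneous of bi-degree $(0,sl(B)+2)$ (split off the other graded pieces, which are killed independently). Thus the long exact sequence may be taken degree-by-degree, and the preimage automatically lands in $H^{0,sl(B)+2}_{BN}(\widehat{B})$. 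I would also remark that $[\beta(B)]$ and $[\overline{\beta}(B)]$ are non-torsion (by the preceding proposition in the excerpt), so in (2)/(3) the element $x$ is itself necessarily non-torsion; this is not needed for the logical equivalence but is worth noting as it explains why the vanishing of $[\psi]$ is governed by $U$-divisibility of a fixed non-torsion class.
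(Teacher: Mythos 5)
Your proposal is correct and follows exactly the paper's argument: the paper's own proof simply states that the equivalence is immediate from the short exact sequence \eqref{Eq:BNKh} and the identity $\pi_{Kh}(\beta) = \pi_{Kh}(\overline{\beta}) = \psi$, which is precisely the long-exact-sequence argument you spell out. Your additional remarks on the bi-degree bookkeeping and the non-torsion nature of $[\beta(B)]$ are accurate elaborations of what the paper leaves implicit.
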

\begin{proof}
Is immediate from the exact sequence in Equation \eqref{Eq:BNKh} and from the fact that $\pi_{Kh}(\beta) = \pi_{Kh}(\overline{\beta}) = \psi$.
\end{proof}

In order to prove the equivalence between the $\beta$- and the LNS-invariants we need a technical lemma. The proof of this lemma is quite easy and is left as an exercise.

\begin{lemma}[Unique homogeneous lift]\label{lemma:uniquelift}
Let $R$ be a PID, and let $M$ be a graded $R[U]$-module, where $deg(U) = -2$ and the graded structure on $R[U]$ is the natural one. Define a filtration on $M/(U-1)M$ as follows
\[\mathscr{F}_{i} = \left\langle [x] \in M/(U-1)M\: \vert\: x\in M_{j},\ j\leq i\right\rangle_{R}.\]
If $M$ is non-trivial only in either even or odd degree and if $[x] \in \mathscr{F}_{i}$, then there exists a unique $\tilde{x} \in M_{i}$ such that $[ \tilde{x}] = [x]$.
\end{lemma}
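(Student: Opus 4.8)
The plan is to study the quotient map $q\colon M \to \overline M := M/(U-1)M$ one graded piece at a time, exploiting the basic fact that $U$ acts as the \emph{identity} on $\overline M$. Indeed, for any $m\in M$ we have $Um-m=(U-1)m\in(U-1)M$, so $q(Um)=q(m)$ and hence $q(U^{s}m)=q(m)$ for every $s\ge 0$. Since $U$ has degree $-2$, this says that a homogeneous element $m\in M_{j}$ and the homogeneous element $U^{s}m\in M_{j-2s}$ have the same class in $\overline M$; in other words a homogeneous representative of a class of $\overline M$ can be slid freely (in steps of $2$) in degree without changing the class. Together with the hypothesis that $M$ is supported in degrees of a single parity (say, after a shift, the even ones, and $i$ even — otherwise $M_{i}=0$ and there is nothing of the relevant parity), this is precisely what identifies the filtration: every class in $\mathscr F_{i}$ has a homogeneous representative, and that representative may be taken in $M_{i}$, so $\mathscr F_{i}=q(M_{i})$ (which is an $R$-submodule, $q$ being $R$-linear). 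Existence of $\tilde x$ is then immediate: $[x]\in\mathscr F_{i}=q(M_{i})$ yields $\tilde x\in M_{i}$ with $q(\tilde x)=[x]$.

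For uniqueness one must show $q|_{M_{i}}$ is injective, i.e. $M_{i}\cap(U-1)M=0$. Write a homogeneous $y=(U-1)m$ of degree $i$ with $m=\sum_{j}m_{j}$, $m_{j}\in M_{j}$, finitely many nonzero. Comparing homogeneous components in $(U-1)m=\sum_{j}(Um_{j}-m_{j})$, where $Um_{j}\in M_{j-2}$, gives
\[
m_{\ell}=Um_{\ell+2}\quad(\ell\neq i),\qquad y=Um_{i+2}-m_{i}.
\]
If $N$ is the top degree with $m_{N}\neq 0$ and $N>i$, the degree-$N$ component of $(U-1)m$ is $-m_{N}\neq 0$, contradicting $y\in M_{i}$; hence $N\le i$, so $m_{i+2}=0$, $y=-m_{i}$, and the relations force $m_{i-2s}=U^{s}m_{i}$ for all $s\ge 0$. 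Finite support of $m$ then gives $U^{s}m_{i}=0$ for $s\gg 0$, so $y=-m_{i}$ lies in the $U$-torsion submodule of $M_{i}$; conversely every $U$-torsion element of $M_{i}$ is killed by $q$. Thus $\ker(q|_{M_{i}})$ is exactly the $U$-torsion of $M_{i}$, and $\tilde x$ is unique once one works modulo $U$-torsion — which is automatic in the applications we care about (there $M$, or the relevant quotient $M/\mathrm{Tors}$, is $U$-torsion free, e.g.\ because $(U-1)$ acts invertibly on the torsion of $H^{\bullet,\bullet}_{BN}$, so the torsion is invisible in $\overline M$).

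The only real work is the bookkeeping in the uniqueness step: one has to track the homogeneous components of $(U-1)m$ carefully and use the single-parity hypothesis in the right place (it is what guarantees the degree-$(i\pm1)$ pieces do not interfere with the degree-$i$ equation). Everything else is formal consequence of "$U=\mathrm{id}$ on $M/(U-1)M$", which is why the lemma is indeed elementary.
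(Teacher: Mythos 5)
Your argument is correct and complete; the paper itself offers no proof to compare against (it declares the lemma ``quite easy'' and leaves it as an exercise), so the value here is that you have actually carried out the bookkeeping. Two remarks, both really about the statement rather than your proof. First, you silently read the filtration as $\mathscr{F}_i=\langle [x] \mid x\in M_j,\ j\geq i\rangle_R$: your sliding move $q(m)=q(U^s m)$ only lowers degree, so it lets you push a representative from $M_j$ down to $M_i$ precisely when $j\geq i$. With the inequality $j\leq i$ as literally printed, every class lies in every $\mathscr{F}_i$ (slide any homogeneous representative arbitrarily far down), the identification $\mathscr{F}_i=q(M_i)$ fails, and the lemma is false (e.g.\ $M=R[U]$ generated in degree $0$ and $i=2$). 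The $j\geq i$ reading is the one consistent with the decreasing filtration $\mathscr{F}_j\supseteq\mathscr{F}_{j+1}$ used everywhere else in the paper, so your proof establishes the intended statement. Second, your computation of $\ker(q|_{M_i})$ as the $U$-power-torsion of $M_i$ is right, and it genuinely shows that uniqueness fails for the lemma as stated (take $M=R[U]\oplus R[U]/(U^2)$, both generators in degree $0$: the class of $(1,0)$ in $M/(U-1)M$ has the lifts $(1,a)\in M_0$ for every $a\in R$). You are also right that this is harmless where the lemma is applied, since there $M$ is a chain module of the Bar-Natan complex, which is a free $\mathbb{F}[U]$-module. One small mis-attribution at the end: the single-parity hypothesis is what makes the \emph{existence} step work (it forces $\sum_{j\geq i}q(M_j)=q(M_i)$ rather than $q(M_i)+q(M_{i+1})$); your uniqueness computation never actually uses it, since the component equations $m_\ell=Um_{\ell+2}$ already kill the components of parity opposite to $i$ on their own.
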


\begin{proof}[of Proposition \ref{prop:equivalence}]
Denote by $\ell$ the number of components of the Alexander closure of $B$. It is well known that Bar-Natan homology of $B$ and $B^\prime$ is supported in quantum degrees which are congruent to $\ell$ modulo $2$ (cf. \cite[Proposition 24]{Khovanov00}, or see \cite[Corollary 2.24]{Thesis1}).  Notice that $\beta$ (resp. $\overline{\beta}$) is the unique homogeneous lift of $\psi^{+}$ (resp. $\psi^{-}$) of quantum degree $sl$. Thus we can apply Lemma \ref{lemma:uniquelift}, and the statement follows.
\end{proof}

\begin{proposition}\label{prop:flype invariance}
The homology classes of the $\beta$-invariants are flype invariant in the following sense; if $B$ and $B^\prime$ are related by a negative flype, then there exists a (bi-graded) isomorphism of $\mathbb{F}[U]$-modules $F_*:H_{BN}^{\bullet,\bullet}(\widehat{B}) \to H_{BN}^{\bullet,\bullet}(\widehat{B^\prime})$ sending $[\beta(B)]$ and $[\overline{\beta}(B)]$ to $[\beta(B^\prime)]$ and $[\overline{\beta}(B^\prime)]$, respectively.
\end{proposition}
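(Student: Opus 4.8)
The plan is to combine the flype-invariance statement of Lipshitz--Ng--Sarkar for $\psi^{\pm}$ (the relevant part of Proposition \ref{prop:LipshitzNgSarkar}, specialised to the case $sl(B)=sl(B^\prime)$, which holds for a negative flype) with the lifting machinery already developed in this section, chiefly Lemma \ref{lemma:uniquelift} and Proposition \ref{prop:equivalence}. First I would recall that a negative flype relates $B$ and $B^\prime$ by a sequence $\Sigma$ of Markov moves with $sl(B)=sl(B^\prime)$, and that by \cite{Nglipsar13} the induced map $\phi_\Sigma$ on $TLee$ sends $\psi^{\pm}(B)$ to $\pm\psi^{\pm}(B^\prime)$ up to a boundary; in fact the stronger statement in \cite[Subsection 4.something]{Nglipsar13} gives an actual isomorphism of the filtered homologies matching the classes $[\psi^{\pm}]$. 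The point of Proposition \ref{prop:equivalence} is precisely that, once $sl$ is preserved, the chain-level equations for $\beta,\overline\beta$ in $C_{BN}$ are equivalent to the corresponding equations for $\psi^{\pm}$ in $C_{TLee}$; the bridge between the two is the short exact sequence \eqref{Eq:BNTLee} together with the unique-homogeneous-lift lemma.

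The key steps, in order, are: (1) Fix $\Sigma$ realising the negative flype and let $\Phi_\Sigma\colon C_{BN}^{\bullet,\bullet}(\widehat B)\to C_{BN}^{\bullet,\bullet}(\widehat{B^\prime})$ and $\phi_\Sigma\colon C_{TLee}^{\bullet}(\widehat B)\to C_{TLee}^{\bullet}(\widehat{B^\prime})$ be the induced chain maps; note $\Phi_\Sigma$ is a bi-graded chain homotopy equivalence and reduces mod $(U-1)$ to $\phi_\Sigma$ via \eqref{Eq:BNTLee}. (2) Invoke the Lipshitz--Ng--Sarkar flype result to produce a filtered quasi-isomorphism (or at least the statement $[\phi_\Sigma(\psi^{\pm}(B))]=[\psi^{\pm}(B^\prime)]$ after absorbing the sign as in the discussion following Theorem \ref{theorem:main1}, where $-\mathrm{Id}$ is realised by Markov moves). (3) Pass to homology: $(\Phi_\Sigma)_*$ is a bi-graded $\mathbb{F}[U]$-module isomorphism $H_{BN}^{\bullet,\bullet}(\widehat B)\to H_{BN}^{\bullet,\bullet}(\widehat{B^\prime})$. (4) Use Proposition \ref{prop:equivalence} (equivalently, re-run its proof): since $sl(B)=sl(B^\prime)$ and Bar-Natan homology of a braid closure is supported in quantum degrees of fixed parity, Lemma \ref{lemma:uniquelift} shows $[\beta(B)]$ (resp. $[\overline\beta(B)]$) is the \emph{unique} homogeneous lift of $[\psi^{+}(B)]$ (resp. $[\psi^{-}(B)]$) of quantum degree $sl(B)$, and likewise for $B^\prime$; since $(\Phi_\Sigma)_*$ is bi-graded and covers $(\phi_\Sigma)_*$ on the $(U-1)$-quotient, it must carry the unique lift to the unique lift. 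Hence $(\Phi_\Sigma)_*[\beta(B)]=[\beta(B^\prime)]$ and $(\Phi_\Sigma)_*[\overline\beta(B)]=[\overline\beta(B^\prime)]$; set $F_*=(\Phi_\Sigma)_*$.

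The main obstacle is Step (2): extracting from \cite{Nglipsar13} the precise statement needed. Their Proposition \ref{prop:LipshitzNgSarkar} only gives invariance up to a boundary term lying in the appropriate filtration level, and up to an overall sign; to get an honest module isomorphism preserving the homology classes \emph{on the nose} I must either (a) argue, as in the paragraph after Theorem \ref{theorem:main1}, that the sign ambiguity is harmless because $-\mathrm{Id}$ is itself induced by a sequence of Markov moves, so one may compose $\Sigma$ with such a sequence to kill it; or (b) appeal directly to the stronger "flype gives an isomorphism of the relevant complexes" assertion that \cite{Nglipsar13} actually proves in their treatment of flypes, rather than the weaker Proposition \ref{prop:LipshitzNgSarkar}. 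I would take route (b) for cleanliness, citing the specific flype proposition of Lipshitz--Ng--Sarkar, and then the rest is the bi-graded lifting argument, which is routine given Lemma \ref{lemma:uniquelift}. A minor subtlety to check is that the parity-of-quantum-degree support statement for $H_{BN}$ of $\widehat B$ and $\widehat{B^\prime}$ uses the same parity; this is automatic since a flype preserves the number of components of the closure.
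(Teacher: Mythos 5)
Your final plan (route (b): start from the Lipshitz--Ng--Sarkar flype theorem for $\psi^{\pm}$ on $C_{TLee}$ and transfer the statement to Bar--Natan theory via Lemma \ref{lemma:uniquelift} and the parity of quantum degrees) is exactly the paper's strategy, and your instinct to avoid route (a) is sound: a negative flype is \emph{not} a sequence of transverse Markov moves, and a general Markov sequence realising it must involve negative stabilizations, whose induced maps do not preserve the $\beta$-invariants, so Proposition \ref{prop:LipshitzNgSarkar} alone gives nothing here. However, two things you declare routine are where the actual content of the proof lives. First, once you commit to route (b) there is no Bar--Natan chain map $\Phi_{\Sigma}$ in sight: what \cite[Theorem 4.15]{Nglipsar13} provides is only a filtered chain homotopy equivalence $f$ on $C_{TLee}$, so steps (1) and (3) of your outline, which presuppose a bi-graded $\Phi_{\Sigma}$ on $C_{BN}$ reducing to $f$ modulo $(U-1)$, have nothing to apply to. The lift $F$ of $f$ must be \emph{constructed}; the paper does this by observing that, by parity of quantum degrees, $\pi_{TLee}$ restricts to an isomorphism $C_{BN}^{\bullet,j}\oplus C_{BN}^{\bullet,j+1}\to\mathscr{F}_{j}C_{TLee}^{\bullet}$ with inverse $\rho_{j}$, that these inverses intertwine the inclusions of filtration levels with multiplication by $U$, and that conjugating $f$ by them yields a graded, $\mathbb{F}[U]$-linear chain homotopy equivalence $F$ precisely because $f$ is filtered.

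Second, and this is the point the paper flags in a remark immediately after its proof, it is essential that the correction terms in $f(\psi^{\pm}(B))=\pm\psi^{\pm}(B^\prime)+d_{TLee}x_{\pm}$ lie at filtration level exactly $sl$ rather than strictly below it: only then does the lifted relation read $F(\beta(B))=\pm\beta(B^\prime)+d_{BN}\widetilde{x_{\pm}}$ with $\widetilde{x_{\pm}}$ homogeneous of quantum degree $sl$. If $x_{\pm}$ sat lower in the filtration, the same argument would only show that $U^{k}[\beta(B)]$ is carried to $\pm U^{k}[\beta(B^\prime)]$ for some $k>0$, which is strictly weaker. Relatedly, invoking Lemma \ref{lemma:uniquelift} directly on homology classes, as your step (4) does, requires identifying the filtration the lemma defines on $H_{BN}/(U-1)H_{BN}$ with the filtration on $H_{TLee}$ induced from the chain level; this is not automatic, and the paper sidesteps it by performing the entire lift at the chain level. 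Finally, the residual sign is removed by citing the coherence of the signs for $\psi^{+}$ and $\psi^{-}$ from \cite{Nglipsar13}; your alternative of composing with a realisation of $-\mathrm{Id}$ would equally require that coherence, since a single global sign must correct both invariants simultaneously.
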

\begin{proof}
In \cite[Theorem 4.15]{Nglipsar13} it has been shown that there is a filtered chain homotopy equivalence $f:C_{TLee}^{\bullet}(\widehat{B}) \to C_{TLee}^{\bullet}(\widehat{B^\prime})$, such that \[ f(\psi^{\star}(B)) = \pm \psi^{\star}(B^\prime) + d_{TLee} x_\star\quad \text{for some }x_\star\in \mathscr{F}_{sl}C_{TLee}^{-1}(\widehat{B^\prime}),\]
where $\star \in \{ +,\: -\} $ and $sl = sl(B) = sl(B^\prime)$. Let $D$ be an oriented link diagram. Thanks to Lemma \ref{lemma:uniquelift} one can define, for each $j$, an isomorphism $\rho_{j}$ of $\mathbb{F}$-chain complexes, whose inverse is $\pi_{TLee}$, which fits into the following commutative square
\[ 
\xymatrix{
C_{BN}^{\bullet, j}(D) \bigoplus C_{BN}^{\bullet, j+1}(D)\ar[d]_{\footnotesize{\begin{pmatrix}
0 & U \\ 1 & 0
\end{pmatrix}}} \ar@/^/[rr]^{\pi_{TLee}} &&\ar@/^/[ll]^{\rho_{j}} \ar@{^(->}[d]\mathscr{F}_{j}C_{TLee}^{\bullet}(D) \\
C_{BN}^{\bullet, j-1}(D) \bigoplus C_{BN}^{\bullet, j}(D)\ar@/^/[rr]^{\pi_{TLee}} &&\ar@/^/[ll]^{\rho_{j}} \mathscr{F}_{j-1}C_{TLee}^{\bullet}(D) \\
}
\]
Recall that all the enhanced states in the Bar-Natan complex have quantum degree congruent modulo $2$ to the number of components of the link represented by $D$. It follows that one of the two summands on the right hand side of the square is always trivial. Now, we can use the $\rho_{j}$'s to define a lift $F$ of $f$ to Bar-Natan theory. Since $f$ is a filtered chain homotopy equivalence, the map $F$ will be graded, $\mathbb{F}[U]$-linear and a chain homotopy equivalence. Furthermore, Lemma \ref{lemma:uniquelift} ensures us that $F(\beta(B)) = \pm \beta (B^\prime) + d_{BN} \widetilde{x_+}$, and $F(\overline{\beta}(B)) = \pm \overline{\beta} (B^\prime) + d_{BN} \widetilde{x_-}$, where $\widetilde{x_\pm}$ is the unique lift of $x_\pm$ of quantum degree $sl$. Since the sign changes are coherent (\cite[Proof of Theorem 4.5]{Nglipsar13}), the claim follows.
\end{proof}

\begin{remark}
If the filtered degrees of $x_\pm$ were strictly lower than $sl$, we could have only proved that $U^k$ times the homology classes of the $\beta$-invariants were flype invariant, for some $k>0$.
\end{remark}

\begin{remark}
We have proved a slightly stronger statement; every time that the LNS-invariant are preserved as in Proposition \ref{prop:LipshitzNgSarkar}, then the homology classes of the $\beta$-invariants are invariant as in Proposition \ref{prop:flype invariance}.
\end{remark}

Proposition \ref{prop:flype invariance} implies that it is really difficult to use the homology classes of the $\beta$-invariants to distinguish flypes; even if one manages prove that the chain homotopy $F$ is not the chain homotopy equivalence associated to a sequence of Markov (or Reidemeister) moves, there will be the problem of how to extract information from the homology classes of the $\beta$-invariants. Moreover, the argument of Lipshitz, Ng and Sarkar can be modified to prove that the homology classes of the $\beta_{\mathcal{F}}$-invariants, for a large choice of $\mathcal{F}$, cannot be used to distinguish flypes.
\subsection{The $c$-invariants}
Let $B$ be a braid, and $\mathbb{F}$ be a field. Define the $c$\emph{-invariants} of $B$ (over $\mathbb{F}$) as follows
\[ c_{\mathbb{F}}(B) = max \left\lbrace k\: \vert \: [\beta(B)] = U^k x,\: \text{for some }x\in H_{BN}^{0,\bullet}(B, \mathbb{F}[U]) \right\rbrace\]
and
\[ \overline{c}_{\mathbb{F}}(B) = max \left\lbrace k\: \vert \: [\overline{\beta}(B)] = U^k x,\: \text{for some }x\in H_{BN}^{0,\bullet}(B, \mathbb{F}[U]) \right\rbrace\]
The $c$-invariants are, of course, transverse braid invariants. Moreover, they provide the same or less amount of transverse information as the homology classes of the $\beta$-invariants. In particular, we have the flype invariance.

\begin{proof}[Proposition \ref{proposition:main3}]
The proposition follows directly from Proposition \ref{prop:flype invariance}.
\end{proof}

Notice that $[\psi(B)] = 0$ if, and only if, $c_{\mathbb{F}}(B) >0$ (Proposition \ref{prop:psiebeta}). In particular, the $c$-invariants determine the vanishing of the homology class of $\psi$.

\begin{definition}
An oriented link $\lambda$ is called \emph{$c$-simple} if each non-simple pair of transverse representatives of $\lambda$ have the same $c$-invariants. 
\end{definition}

The non-effectiveness of the $c$-invariants is equivalent to all links being $c$-simple. We wish to address the following question: let $\lambda$ be an oriented link. What are the homological conditions which $\lambda$ should satisfy to be $c$-simple? 
 
This question is intentionally vague. For example, we did not specify which homology one should consider, or which type of condition one should look for. However, we manage to give some sufficient conditions for a knot to be $c$-simple.

First, we need to look more closely at the $\beta$-invariants. Let $B$ be a braid representing the knot $\kappa$. Denote by $s(\kappa) = s(\kappa,\mathbb{F})$ the Rasmussen invariant of $\kappa$ (\cite{Rasmussen10}). Fix an isomorphism of bi-graded $\mathbb{F}[U]$-modules
\begin{equation}\begin{small}
\phi : H_{BN}^{\bullet,\bullet}(\widehat{B},\mathbb{F}[U]) \to \bigoplus_{i=1}^{m} \frac{\mathbb{F}[U]}{(U^{t_{i}})} (h_{i},q_{i}) \oplus \mathbb{F}[U] (0,s(\kappa) +1)\oplus \mathbb{F}[U] (0,s(\kappa) -1),\end{small}
\label{fixiso2}
\end{equation}
which exists, for some choices of $h_{i}$, $q_{i}\in \mathbb{Z}$ and $t_{i} \in \mathbb{N}$, by the structure theorem for graded modules over a PID (see, for example, \cite[Theorem 3.19]{Zomorodian05}) and by \cite{Khovanov06}. Consider the natural generators of the module on the right hand side of \eqref{fixiso2}, that is
\[ e_{i} = (\overset{\overset{i-th\: \text{place}}{\downarrow}}{0,...,0,[1],0,...,0})\quad  f_{1} = (0,...,1,0)\quad \text{and}\quad f_{2} = (0,...,0,1),\]
where $i \in \{ 1,...,m\}$, and set
\[ \tilde{e}_i = \phi^{-1}(e_i) \quad \text{and}\quad \tilde{f}_{j} = \phi^{-1}(f_{j}).\]
Notice that for each $i$ we have
\[ (hdeg(\tilde{e}_i) , qdeg(\tilde{e}_i)) = (h_{i},q_{i}).\]
Denote by $I_{0}$ the set of all $i\in \{ 1,...,m\}$ such that $h_{i} = 0$.
From the definitions of the $\tilde{e}_{i}$'s, $\tilde{f}_1$, $\tilde{f}_{2}$ and $c_{\mathbb{F}}(B)$ it follows immediately that
\begin{equation}
[\beta(B,\mathbb{F})] = U^{c_{\mathbb{F}}(B)}\left(\alpha_1 U^{r_1} \tilde{f}_1 + \alpha_2U^{r_2} \tilde{f}_2 + \sum_{i\in I_{0}} \gamma_{i}U^{k_i}\tilde{e}_{i}\right),
\label{eq:homologyclassofbeta}
\end{equation}
where at least one among $r_1$, $r_2$, and the $k_i$'s (such that $\gamma_{i}U^{k_i}\tilde{e}_{i} \ne 0$) is zero. 
Moreover, as the homology classes of the $\beta$-invariants generate a rank $2$ $\mathbb{F}[U]$-sub-module of $H_{BN}^{\bullet,\bullet}(\kappa,\mathbb{F}[U])$, it follows that at least one among $\alpha_{1}$ and $\alpha_{2}$ is non-trivial.
Since the $\beta$-invariants are homogeneous, it follows that
\[ q_i -2 k_i =  s(\kappa) -1 - 2r_2 =  s(\kappa) + 1 - 2r_1 = sl(B) + 2 c_{\mathbb{F}}(B).\]
In particular, we get that
\[ r_1 = r_2 + 1.\]
If $r_1$ equals $0$ we obtain that
\[ s(\kappa) -1 = sl(B) +2c_{\mathbb{F}}(B).\]
Thus, under the above assumption $c_{\mathbb{F}}$ would be (half of) the difference between a knot invariant and the self linking, and hence non-effective. A similar reasoning applies to $\bar{c}_{\mathbb{F}}$. Making use of these considerations we can prove the following proposition.

\begin{proposition}\label{proposition:csimpleknotsBNs}
Let $\kappa$ be an oriented knot. If $q_i$ is greater than or equal to $s(\kappa) - 1$ for each $i\in I_{0}$, then $\kappa$ is $c$-simple, where $s(\kappa)$ denotes the Rasmussen invariant. Moreover, in this case we have
\[ s(\kappa) -1 = sl(B) +2c_{\mathbb{F}}(B),\]
for each braid representative $B$ of $\kappa$.
\end{proposition}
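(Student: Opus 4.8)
The plan is to pin down the integer $D := sl(B) + 2c_{\mathbb{F}}(B)$, showing that $D = s(\kappa)-1$ for every braid representative $B$ of $\kappa$; granting this, $c$-simplicity is immediate, since $s(\kappa)$ depends only on $\kappa$ and $sl(B)$ only on the transverse link, so $c_{\mathbb{F}}(B)$ (and, by the same reasoning, $\overline{c}_{\mathbb{F}}(B)$) becomes a function of $\kappa$ and $sl(B)$ alone. First I would read $D$ off the decomposition \eqref{eq:homologyclassofbeta}: writing $[\beta(B)] = U^{c_{\mathbb{F}}(B)}x$ with $x$ not divisible by $U$, the element $x$ is homogeneous of quantum degree $D$ in $H_{BN}^{0,\bullet}(\widehat{B},\mathbb{F}[U])$, and by the structure decomposition \eqref{fixiso2} the homogeneous elements of $H_{BN}^{0,\bullet}$ that are \emph{not} divisible by $U$ occur exactly in the degrees $s(\kappa)+1$, $s(\kappa)-1$ (the generators $\tilde{f}_1,\tilde{f}_2$) and $q_i$ for $i\in I_0$ (the torsion generators $\tilde{e}_i$). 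Hence $D$ is the minimum of the set built from $\{s(\kappa)+1\}$ (present iff $\alpha_1\neq 0$), $\{s(\kappa)-1\}$ (present iff $\alpha_2\neq 0$), and $\{q_i : i\in I_0,\ \gamma_i\neq 0\}$. The hypothesis $q_i\geq s(\kappa)-1$ for $i\in I_0$ says every element of this set is $\geq s(\kappa)-1$, whence $D\geq s(\kappa)-1$. (This is the only place the hypothesis is used; equivalently, as in the discussion preceding the statement, when $\alpha_2\neq 0$ homogeneity already forces $D = s(\kappa)-1-2r_2$, and the hypothesis prevents a torsion generator in homological degree $0$ from sitting strictly below $s(\kappa)-1$ and dragging $D$ down.)

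It remains to prove $D\leq s(\kappa)-1$, and for this it suffices to know $\alpha_2\neq 0$. Here I would invoke that $\pi_{TLee}(\beta(B)) = \psi^{+}(B)$ represents a canonical generator of twisted Lee homology, whose homology class has filtration level $s(\kappa,\mathbb{F})-1$ (Rasmussen). Under the identification induced by \eqref{fixiso2}, the part of twisted Lee homology of filtration $\geq s(\kappa)+1$ is spanned by the image of $\tilde{f}_1$ alone — the torsion classes $\tilde{e}_i$ die modulo $U-1$, and $\tilde{f}_2$ sits in degree $s(\kappa)-1$ — so a class of filtration level strictly below $s(\kappa)+1$ must have nonzero $\tilde{f}_2$-coefficient. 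Thus $\alpha_2\neq 0$, so $s(\kappa)-1$ belongs to the set above, $D\leq s(\kappa)-1$, and therefore $D = s(\kappa)-1$. Running the same argument with $\overline{\beta}$ and $\psi^{-}$ gives $s(\kappa)-1 = sl(B)+2\overline{c}_{\mathbb{F}}(B)$, which completes the proof.

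The hard part is the step $\alpha_2\neq 0$: without it, a low quantum-degree torsion summand of $H^{0}_{BN}(\widehat{B})$ could carry the $U$-indivisibility of $[\beta(B)]$ and force $c_{\mathbb{F}}(B) < (s(\kappa)-1-sl(B))/2$. The hypothesis $q_i\geq s(\kappa)-1$ kills the torsion obstruction in homological degree $0$; the canonical-generator input — or, as a substitute, the rank-$2$ property of $\langle[\beta(B)],[\overline{\beta}(B)]\rangle$ from Proposition \ref{prop:psiebeta} together with the conjugation relation \eqref{eq:differencebetweentheconjugatesgeneral} — is what excludes the remaining free-part possibility $D = s(\kappa)+1$. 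A small point to check en route is that torsion classes of quantum degree $> s(\kappa)+1$, which the hypothesis does permit, indeed contribute nothing under $\pi_{TLee}$.
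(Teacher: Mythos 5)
Your argument follows the paper's route: the same decomposition \eqref{fixiso2}, the same identification of $sl(B)+2c_{\mathbb{F}}(B)$ with the minimal quantum degree of a generator appearing with nonzero coefficient in \eqref{eq:homologyclassofbeta}, and the same use of the hypothesis $q_i\geq s(\kappa)-1$ to push that minimum up to $s(\kappa)-1$. Where you go further is in the second half. The paper disposes of the remaining possibilities by noting that $r_2>0$ would force all exponents to be positive, contradicting the maximality of $c_{\mathbb{F}}(B)$, hence $r_2=0$; this tacitly assumes that the $\tilde f_2$-coefficient $\alpha_2$ is nonzero (equivalently that $r_2$ is a well-defined non-negative integer), whereas the preceding discussion only records that at least one of $\alpha_1,\alpha_2$ is nonzero. (The paper's sentence ``It follows that $r_1$ must be equal to $0$'' only parses with $r_1$ replaced by $r_2$, consistent with your reading.) You isolate exactly this point — the possibility $sl(B)+2c_{\mathbb{F}}(B)=s(\kappa)+1$ — and close it by importing Rasmussen's computation that the canonical class $[\psi^{+}]=\pi_{TLee}([\beta(B)])$ has filtration level exactly $s(\kappa)-1$, while a nonzero class with vanishing $\tilde f_2$-coefficient is a multiple of $\pi_{TLee}(\tilde f_1)$ and hence lies in $\mathscr{F}_{s(\kappa)+1}H^{0}_{TLee}$; only the easy inclusion (a class represented by a homogeneous Bar-Natan cycle of quantum degree $j$ maps into $\mathscr{F}_{j}H^{\bullet}_{TLee}$) is needed, so the step is sound, and the torsion summands indeed die under $\pi_{TLee}$ since $U-1$ is invertible on them. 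This buys a complete treatment of a case the printed proof passes over. The remainder — reducing $c$-simplicity to the formula $s(\kappa)-1=sl(B)+2c_{\mathbb{F}}(B)$ and running the parallel argument for $\overline{\beta}$ and $\psi^{-}$ — matches the paper.
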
 
\begin{proof}
If $q_i \geq s(\kappa) - 1$, then $k_{i}\geq r_2$. Thus, if $r_2 > 0$, then the $k_i$'s are also strictly greater than $0$. It follows that $r_1$ must be equal to $0$, and the claim follows.
\end{proof}

\begin{remark}
the proof of Proposition \ref{proposition:csimpleknotsBNs} works also if $\kappa$ is a link such that $H^{0,\bullet}_{BN}(\kappa,\mathbb{F}[U])/Tor (H_{BN}^{0,\bullet}(\kappa,\mathbb{F}[U]) )$ is supported in two quantum degrees. These links are called \emph{pseudo-thin} in \cite{Alberto15}.
\end{remark}

\begin{proof}[of Proposition \ref{corollary:csimpleknotsKh}]
Directly from the definitions, it follows that $C_{Kh}^{\bullet,\bullet}(D,\mathbb{F}) = C_{BN}^{\bullet,\bullet}(D,\mathbb{F}[U]) \otimes_{\mathbb{F}[U]} \mathbb{F}[U]/(U)$, for each oriented link diagram $D$. From the K\"unneth theorem follows immediately that if (1), (2) or (3) are satisfied, then $q_{i} \geq s(\kappa) - 1$ (see also \cite[Proposition 2.26]{Thesis1}). Proposition \ref{corollary:csimpleknotsKh} now follows from Proposition \ref{proposition:csimpleknotsBNs}.
\end{proof}

From the analysis of the Bar-Natan and Khovanov homologies of all prime knots with less than 12 crossings it follows that

\begin{corollary}\label{corollary:allknotswithlessthan12arecsimple}
Let $\mathbb{F}$ be a field such that $char(\mathbb{F}) \ne 2$. All prime knots with less than 12 crossings and their mirror images satisfy the conditions in Proposition \ref{corollary:csimpleknotsKh}. In particular, they are $c$-simple over $\mathbb{F}$.
\end{corollary}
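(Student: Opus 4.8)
The plan is to verify the hypotheses of Proposition~\ref{corollary:csimpleknotsKh} by a direct, finite inspection: for each of the $801$ prime knots with fewer than twelve crossings, and for its mirror image, one checks that at least one of the three conditions (1)--(3) holds, whence $c$-simplicity follows from that proposition. The first move is to cut down the data that must be inspected. Every prime knot with less than twelve crossings has Khovanov homology whose only torsion is $2$-torsion --- a documented computational fact --- so over a field $\mathbb{F}$ with $\mathrm{char}(\mathbb{F})\neq 2$ the dimensions of $H_{Kh}^{i,j}(\kappa,\mathbb{F})$ agree with those of the rational Khovanov homology, the Rasmussen invariant $s(\kappa,\mathbb{F})$ agrees with $s(\kappa,\mathbb{Q})$, and --- via the K\"unneth sequence relating $C_{BN}$ and $C_{Kh}$ already exploited in the proof of Proposition~\ref{corollary:csimpleknotsKh} --- the $U$-torsion of $H^{0,\bullet}_{BN}(\kappa,\mathbb{F}[U])$ is governed by the same rational data. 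So it is enough to work with the rational tables (the Knot Atlas, or a direct computation; see also \cite{Thesis1}).

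Next I would dispose of the thin knots. If $\kappa$ is $Kh$-thin then $H_{Kh}^{0,\bullet}(\kappa,\mathbb{Q})$ is supported in the two quantum degrees $s(\kappa)\pm 1$, so condition (1) holds; the mirror $\bar\kappa$ is again thin, so it too satisfies (1). This covers all alternating knots --- hence all prime knots with at most seven crossings --- and more generally all quasi-alternating knots, which is the bulk of the remaining cases. Here one must remember that (1)--(3) are \emph{not} mirror-symmetric, since $H_{Kh}^{i,j}(\bar\kappa,\mathbb{Q})\cong H_{Kh}^{-i,-j}(\kappa,\mathbb{Q})$ and $s(\bar\kappa)=-s(\kappa)$; for thin knots this causes no problem, but for the rest each knot and its mirror must be inspected separately.

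Finally comes the genuine case analysis. For each of the finitely many non-thin prime knots with fewer than twelve crossings --- a subset of the non-alternating knots, starting with $8_{19}$, $8_{20}$, $8_{21}$, $9_{42},\dots,9_{49}$ and running through the non-alternating $10$- and $11$-crossing knots --- and for its mirror, I would read off $H_{Kh}^{0,\bullet}$, $H_{Kh}^{-1,\bullet}$, $s(\kappa)$ and the $U$-torsion exponents of $H^{0,\bullet}_{BN}$ from the tables, and check that one of the following holds: $H_{Kh}^{0,\bullet}$ vanishes in quantum degrees $<s(\kappa)-1$; or $H_{Kh}^{-1,\bullet}$ vanishes in quantum degrees $<s(\kappa)-3$; or every $U$-torsion exponent of $H^{0,\bullet}_{BN}$ is even and $H_{Kh}^{-1,\bullet}$ vanishes in quantum degrees $<s(\kappa)-5$. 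In practice one tests the three conditions in this order, and each residual knot is settled by the first one that applies.

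The hard part here is not conceptual --- the whole argument is a bounded computation --- but organisational: one must be sure that all $801$ knots (and their mirrors) are accounted for, that the mirror of each non-thin knot is checked in its own right rather than inferred from the knot (the hypotheses not being mirror-symmetric), and that the tables one uses are legitimate over fields of odd characteristic, which --- as in the first step --- comes down to the documented absence of odd torsion in Khovanov homology below twelve crossings.
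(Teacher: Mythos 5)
Your proposal is correct and follows essentially the same route as the paper: reduce to rational data via the documented absence of odd torsion below twelve crossings, dispose of the alternating (hence $Kh$-thin) knots via Proposition \ref{corollary:csimpleknotsKh}, and settle the remaining non-alternating knots and their mirrors by a finite table check of conditions (1)--(3). The paper additionally invokes the general fact (for $char(\mathbb{F})\ne 2$) that all $U$-torsion exponents of $H_{BN}^{\bullet,\bullet}$ are automatically even, rather than reading this off the tables, but this does not change the structure of the argument.
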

\begin{proof}
For the computation of integral Khovanov homology the reader may refer to the KnotAtlas (\cite{KnotAtlas}). Since there is only $2$-torsion in the integral Khovanov homology of the prime knots with less than $12$ crossings, their Khovanov homology over $\mathbb{F}$ is concentrated in the same bi-degrees as their rational Khovanov homology. A well-known theorem due to Lee (\cite{Lee05}) states that alternating knots are $Kh$-thin. As a consequence of Proposition \ref{corollary:csimpleknotsKh}, all alternating knots are $c$-simple. So we may restrict our attention to the non-alternating knots.
According to KnotInfo (\cite{Knotinfo}), among the 249 prime knots with less than 11 crossings those which are non-alternating are the following
\[
\begin{matrix}
\textcolor{blue}{8_{19}} & \textcolor{red}{8_{20}} & \textcolor{red}{8_{21}} & 9_{42} & \textcolor{red}{9_{43}} & \textcolor{red}{9_{44}} & \textcolor{red}{9_{45}} & \textcolor{red}{9_{46}} & \textcolor{red}{9_{47}} & \textcolor{red}{9_{48}} \\
\textcolor{red}{9_{49}} & \textcolor{blue}{10_{124}} & \textcolor{red}{10_{125}} & \textcolor{red}{10_{126}} & \textcolor{red}{10_{127}} & \textcolor{blue}{10_{128}}  & \textcolor{red}{10_{129}}  & \textcolor{red}{10_{130}}  & \textcolor{red}{10_{131}} & \textcolor{blue}{10_{132}}  \\
\textcolor{red}{10_{133}} & \textcolor{red}{10_{134}} & \textcolor{red}{10_{135}} & 10_{136} & \textcolor{red}{10_{137}} & \textcolor{red}{10_{138}}  & \textcolor{blue}{10_{139}}  & \textcolor{red}{10_{140}}  & \textcolor{red}{10_{141}} & \textcolor{red}{10_{142}}  \\
\textcolor{red}{10_{143}} & \textcolor{red}{10_{144}} & \textcolor{blue}{10_{145}} & \textcolor{red}{10_{146}} & \textcolor{red}{10_{147}} & \textcolor{red}{10_{148}}  & \textcolor{red}{10_{149}}  & \textcolor{red}{10_{150}}  & \textcolor{red}{10_{151}} & \textcolor{blue}{10_{152}}  \\
\textcolor{blue}{10_{153}} & \textcolor{blue}{10_{154}} & \textcolor{red}{10_{155}} & \textcolor{red}{10_{156}} & \textcolor{red}{10_{157}} & \textcolor{red}{10_{158}}  & \textcolor{red}{10_{159}}  & \textcolor{red}{10_{160}}  & \textcolor{blue}{10_{161}} & \textcolor{red}{10_{162}}  \\
\textcolor{red}{10_{163}} & \textcolor{red}{10_{164}} & \textcolor{red}{10_{165}} &  &   &   &    &    &  &  \\
\end{matrix}
\]
The ones marked in red are the $Kh$-thin knots, while those in blue are the $Kh$-pseudo-thin knots. If a knot is $Kh$-thin or $Kh$-pseudo-thin, then also its mirror image is $Kh$-thin or $Kh$-pseudo-thin. Thus, by Proposition \ref{corollary:csimpleknotsKh}, all prime knots in the list above (and also their mirrors) are $c$-simple, except $9_{42}$ and $10_{136}$. These knots satisfy condition (2) of Proposition \ref{corollary:csimpleknotsKh} and hence they are $c$-simple.

Finally, among the non-alternating prime knots with 11 crossings and their mirrors the ones which are neither pseudo-thin nor satisfy the condition (2) of Proposition \ref{corollary:csimpleknotsKh} are
\[
\begin{matrix}
m(11_{n12}) & m(11_{n24}) & 11_{n34} & m(11_{n34})  & 11_{n42} \\
m(11_{n42})  &   m(11_{n70})  &   m(11_{n79}) & 11_{n92} & m(11_{n96}) \\
\end{matrix}
\]
It is know that if $char(\mathbb{F})\ne 2$ the torsion sub-module of $H^{\bullet,\bullet}_{BN}(\kappa,\mathbb{F}[U])$ is isomorphic to the $\mathbb{F}[U]$-module
\[M = \bigoplus_{i=1}^{m} \frac{\mathbb{F}[U]}{(U^{2k_{i}})},\]
for some $m$, $k_{1}$, ..., $k_{m}\in\mathbb{N}\setminus \{ 0 \}$ (\cite[Corollary 2.33]{Thesis1}). The links listed above satisfy point (3) of Proposition \ref{corollary:csimpleknotsKh}. Hence they are $c$-simple and the claim follows. 
\end{proof}

%\begin{proof}[of Corollary \ref{corollary:vanishingofpsiineff}]
%Corollary \ref{corollary:vanishingofpsiineff} is an immediate consequence of Proposition \ref{corollary:csimpleknotsKh} and Corollary \ref{corollary:allknotswithlessthan12arecsimple}.
%\end{proof}

The reader should take into account that knots with less than 13 crossings seem to have pretty ``simple'' Khovanov homology. For example, the first prime knot to have Khovanov homology supported in more than three diagonals, which is also the first with thick torsion, is the knot $13n3663$ (see \cite[Appendix A.4]{Shumakovitch04}). Unfortunately, there is a paucity of examples of non-simple pairs (not obtained via flype) whose underlying knot has crossing number $13$ or $14$, so it is difficult to computationally explore the effectiveness of the $c$-invariants (and of $\psi$).

\bibliography{Bibliography.bib}
\bibliographystyle{plain}

\end{document}